\newtheorem{lemma}{Lemma} 
\newtheorem{definition}{Definition}
\newtheorem{corollary}{Corollary}
\crefname{figure}{Fig.}{Figs.} 
\crefname{equation}{Eq.}{Eqs.}
\newcolumntype{P}[1]{>{\raggedright\arraybackslash}p{#1}}
\newcolumntype{C}[1]{>{\centering\arraybackslash}p{#1}}
\title{From Euler to AI: Unifying Formulas for Mathematical Constants}
\author{%
  Tomer Raz
  \quad Michael Shalyt
  \quad Elyasheev Leibtag
  \quad Rotem Kalisch
  \\
  \quad \textbf{Shachar Weinbaum}
  \quad \textbf{Yaron Hadad}
  \quad \textbf{Ido Kaminer} \thanks{Corresponding author: kaminer@technion.ac.il}
  \\ Technion -- Israel Institute of Technology, \\ Haifa 3200003, Israel
}
\begin{document}

\maketitle

\renewcommand{\thefootnote}{}
\footnotetext{Project repository: \href{https://github.com/RamanujanMachine/euler2ai}{https://github.com/RamanujanMachine/euler2ai}}
\renewcommand{\thefootnote}{\arabic{footnote}}

\vspace{-5pt}
\begin{abstract}
\label{section-abstract}
  The constant {\large $\pi$} has fascinated scholars throughout the centuries, inspiring numerous formulas for its evaluation, such as infinite sums and continued fractions. Despite their individual significance, many of the underlying connections among formulas remain unknown, missing unifying theories that could unveil deeper understanding. The absence of a unifying theory reflects a broader challenge across math and science: knowledge is typically accumulated through isolated discoveries, while deeper connections often remain hidden. In this work, we present an automated framework for the unification of mathematical formulas. Our system combines large language models (LLMs) for systematic formula harvesting, an LLM-code feedback loop for validation, and a novel symbolic algorithm for clustering and eventual unification. We demonstrate this methodology on the hallmark case of {\large $\pi$}, an ideal testing ground for symbolic unification. Applying this approach to 455,050 arXiv papers, we validate 385 distinct formulas for {\large $\pi$} and prove relations between 360 (94\%) of them, of which 166 (43\%) can be derived from a single mathematical object—linking canonical formulas by Euler, Gauss, Brouncker, and newer ones from algorithmic discoveries by the Ramanujan Machine.
  Our method generalizes to other constants, including $e$, $\zeta(3)$, and Catalan’s constant, demonstrating the potential of AI-assisted mathematics to uncover hidden structures and unify knowledge across domains.
\vspace{-10pt}
\end{abstract}

\section{Introduction}
\label{section-introduction}
\vspace{-6pt}
The earliest rigorous approximation for $\pi$ dates back to Archimedes around 250 BCE, who established the bounds $\frac{223}{71} < \pi < \frac{22}{7}$ \citep{archimedess2020}.
Modern $\pi$ approximations employ more sophisticated formulas. For example, the Chudnovsky algorithm \citep{ChudnovskyAlgorithm}, derived from a formula by Ramanujan \citep{Ramanujan1914}, remains instrumental for precision records. Similarly, the BBP formula \citep{BBP1996} is notable for enabling computation of specific $\pi$ digits without requiring prior digits.
Such breakthroughs inspired fundamental advances in computer science, such as high-precision arithmetic \citep{BAILEY201210106}, evolutionary optimization \citep{koza1994genetic}, and elliptic curve cryptography \citep{10.1007/3-540-39799-X_31}.
Recent efforts led to the development of computer algorithms capable of generating numerous formula hypotheses and sometimes proofs for mathematical constants \citep{Raayoni2021,DoughertyBliss2023,beithalachmi2025ramanujanlibraryautomated}.

\begin{minipage}{\textwidth}
\begin{wrapfigure}{r}{0.6\linewidth}
    \vspace{-\intextsep}
    \centering
    \includegraphics[width=\linewidth]{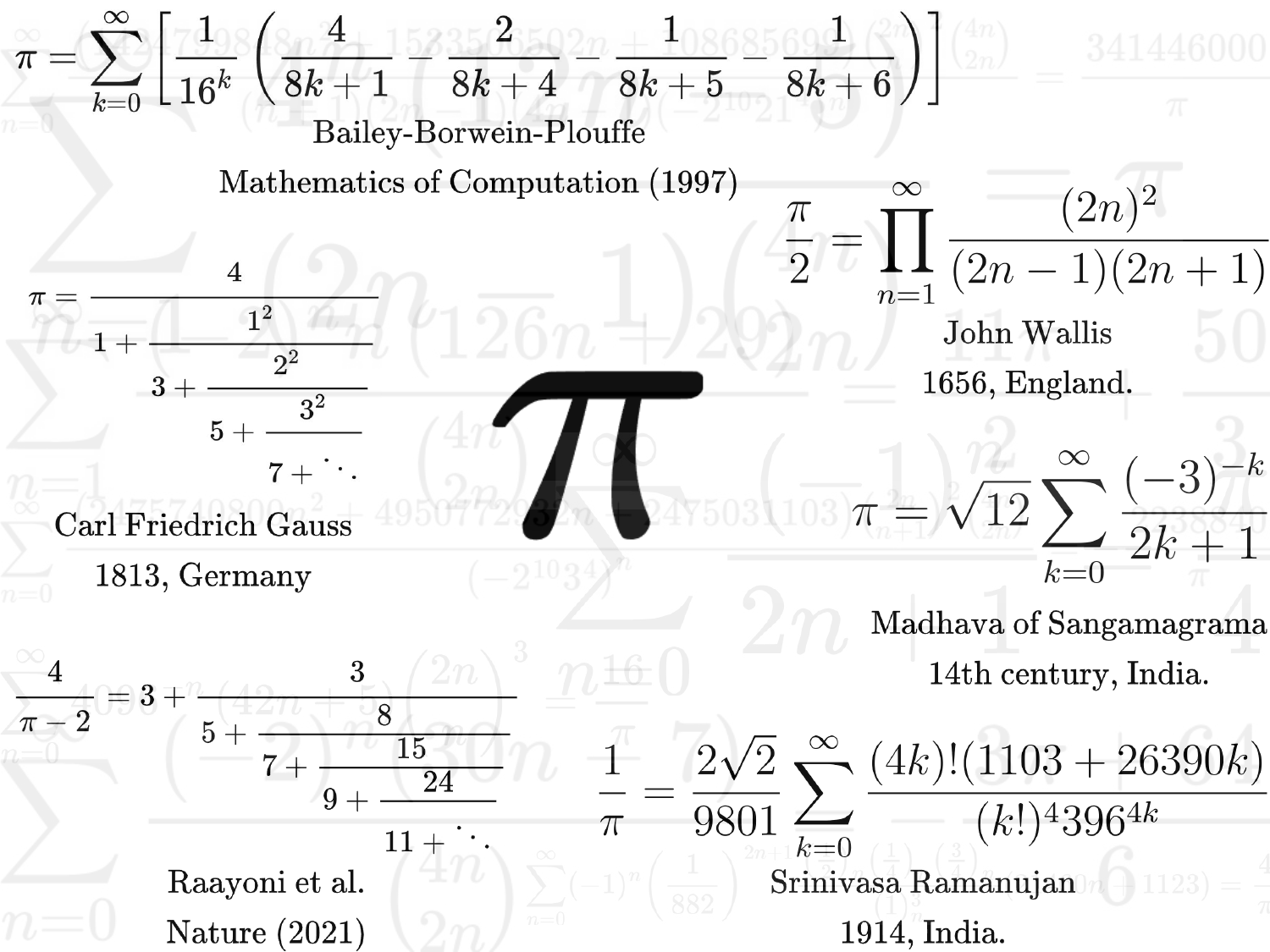} 
    \caption{Selected $\pi$ formulas across centuries.}
    \label{fig:pi-formulas}
    \vspace{-5pt}
\end{wrapfigure}

The plethora of results related to $\pi$ discovered over the centuries leads to a persistent question: How are they all connected?
This question is important not only for preventing accidental rediscoveries (e.g., Lange's formula from 1999 \citep{Lange1999AnEC} had already been derived by Lord Brouncker in 1656 \citep{Osler15012010}). Many equivalent formulas appear vastly different at first glance. A simple example is Euler's continued fraction that provides equivalent representations of infinite sums \citep{euler1748introductio}. 
This complex situation underscores the need for a systematic approach to unify these relationships.

So far, efforts in AI for mathematics have focused on automated theorem proving \citep{AlphaGeometry, paule2024creative}, automated conjecture generation \citep{alfarano2024global, graffiti, Raayoni2021, fawzi2022_deepmind_matmul, alphaevolve}, regression \citep{chartonTransformerRegression2022, hashemi2025transformers, tian2025interactive}, and, recently, LLM--tool integrations for mathematical discovery \citep{pal2022program, parsel2023, ToRA, romera2024mathematical}.
However, to date, no work has addressed the problem of symbolic unification of mathematical knowledge.
\end{minipage}

In this work, we propose a system for the large-scale harvesting, identification, and unification of mathematical formulas (\cref{fig:Overview}). This effort leverages recent advances in content understanding based on large language models (LLMs), the newly discovered concept of Conservative Matrix Fields (CMFs) \citep{doi:10.1073/pnas.2321440121, weinbaum2025conservativematrixfieldscontinuous}, and a novel mathematical algorithm that we call UMAPS for unification via mapping across symbolic structures, using the math of coboundary equivalence for finding and proving relations between formulas.
To demonstrate this methodology, we selected the symbolic case study of formulas calculating $\pi$.
A total of 385 unique $\pi$ formulas were extracted from the literature and validated, of which 43\% were found
to correspond to different trajectories within a single CMF (\cref{section-results}). 
We expect near-future improvements of our algorithm to classify all the $\pi$ formulas into one or just a few unique CMFs that unify all knowledge about $\pi$ calculation.
\begin{figure*}[h!]
    \centering
    \vspace{-4pt}
    \includegraphics[width=1.0\linewidth]{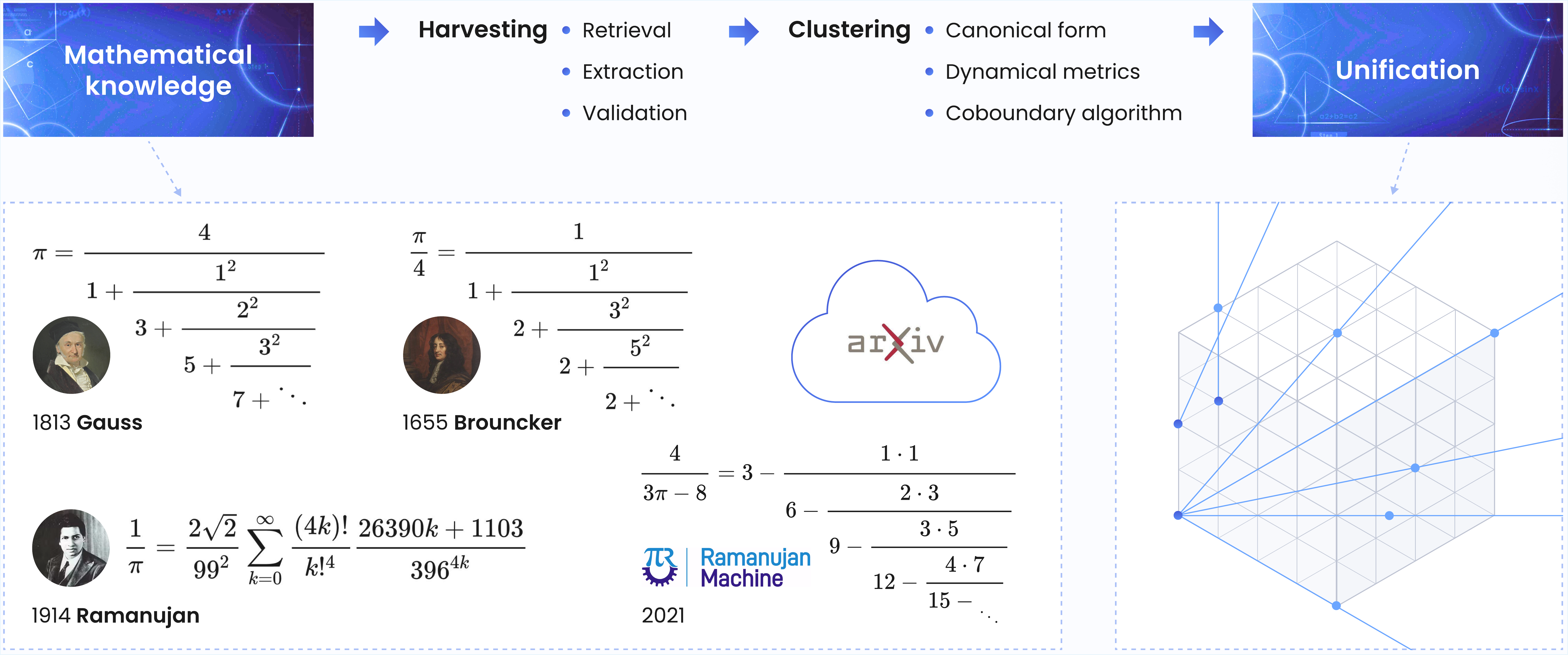}
    \vspace{-12pt}
    \caption{\textbf{Automated methodology for unifying mathematical knowledge.} A large corpus of mathematical formulas is harvested, retrieving formulas that are each translated to executable code for validation. The formulas are then clustered by conversion into their canonical forms, and unified using a novel symbolic computational algorithm that proves their relations.}
    \label{fig:Overview}
\end{figure*}
\vspace{-12pt}

To the best of our knowledge, this work is the first LLM--symbolic-tool integration for discovery in number theory, and possibly the first LLM integration with a proprietary research-grade computer algebra system. The success of this study highlights the prospects of automated unification of vast mathematical knowledge. 
Beyond the example of $\pi$, we applied our algorithm to other mathematical constants like $e$, $\zeta(3)$, and Catalan’s constant, and to a variety of formula structures, showcasing its broad potential. \cref{appendix-glossary} provides a glossary of key terms used throughout the paper.

\section{Mathematical background}
\label{section-math-backround}
\vspace{-3pt}
\subsection{Recurrences as universal representations of formulas for mathematical constants}
A wide range of formulas—including infinite sums, products, and continued fractions—can be converted into recurrences, providing a cohesive framework for unification.
A function \(u_n\) satisfies a recurrence of order $m$ if
$u_n = a_{1,n} u_{n-1} + a_{2,n} u_{n-2}+\ldots+ a_{m,n}u_{n-m}$,
which can be represented via the associated companion matrix:
\vspace{-8pt}
\begin{equation} \label{def:companion-form}
\scalebox{0.8}{
$\operatorname{CM}(n) \coloneqq
\begin{pmatrix}
0 & 0 & \dots & 0 & a_{m,n} \\
1 & 0 & \dots & 0 & a_{m-1,n} \\
0 & 1 & \dots & 0 & a_{m-2,n} \\
\vdots & \vdots & \ddots & \vdots & \vdots \\
0 & 0 & \dots & 1 & a_{1,n}
\end{pmatrix}$
}
\end{equation} 
By incrementally multiplying the companion matrix over $n$ steps, we get the matrix: 
\begin{equation}\label{eq:step_matrix}
\scalebox{0.8}{
$\prod_{i=1}^n\operatorname{CM}(i) =
\begin{pmatrix}
p_{1,n-m}  & \dots &  p_{1,n-1} & p_{1,n} \\
p_{2,n-m} & \dots &  p_{2,n-1} & p_{2,n} \\
p_{3,n-m}  &\dots &  p_{3,n-1} & p_{3,n} \\
\vdots & \ddots & \vdots & \vdots \\
p_{m,n-m} & \dots & p_{m,n-1} & p_{m,n}
\end{pmatrix}$
}
\end{equation}
$p_{1,n},\ldots p_{m,n}$ are solutions to the recurrence for the initial conditions $p_{i,j}=\delta^j_i$. Other solutions for different initial conditions can be written as linear combinations of these.

Recurrences evaluate a desired constant \(L\) either directly $\lim_{n\to \infty} u_n=L$ (e.g., for infinite sums), or as ratios
$\lim_{n\to \infty} \frac{p_n}{q_n}=L$ with $p_n$ and $q_n$ being two solutions for the recurrence (e.g., for continued fractions).
In the special case of a second-order recurrence, $u_n= a_nu_{n-1}+b_nu_{n-2}$, and any pair of solutions is associated with a formula in the form of a continued fraction:
\vspace{-2pt}
\begin{equation} \label{eq:cf_def}
\scalebox{0.8}{
 $\cfrac{b_1}{a_1 + \cfrac{b_2}{\ddots + \cfrac{b_n}{a_n}}} = \frac{p_n}{q_n}$
 }
\end{equation}
When the functions $a_n=a(n)$ and $b_n=b(n)$ are polynomials, the formula above is known as a Polynomial Continued Fraction (PCF), denoted by $\text{PCF}\left( a(n),b(n) \right)$. See details in \cref{appendix-maths}.

\subsection{The dynamical metrics describing each formula}

A formula of a mathematical constant \(L\) provides a converging sequence of rational numbers $\frac{p_n}{q_n}$ (known as a \textit{Diophantine approximation}).
The formula can be characterized by \textit{dynamical metrics} capturing properties such as its convergence rate. A recent paper \citep{BlindDelta} proposed using such metrics for formula discovery and clustering. Here we use the metrics of \textit{convergence rate} and \textit{irrationality measure}.
The \textit{convergence rate} is defined as:
\vspace{-6pt}
\begin{equation}
    \label{def:convergence_rate}
    r = \lim_{n\to \infty} \frac{1}{n}\log \left| L-\frac{p_n}{q_n} \right|
\end{equation}
When examining the connection of two candidate formulas, the ratio of their \(r\) values can hint whether one is a transformation of a subsequence of the other (see \cref{appendix-maths-fold} for an example).
The \textit{irrationality measure} of \(\frac{p_n}{q_n}\) is defined as the limit \(\delta = \lim_{n \to \infty} \delta_n\), where
\vspace{-6pt}
\begin{equation}
    \label{def:irrationlity_measure}
    \delta_n = -1-\frac{\log\left|L - \frac{p_n}{q_n}\right| }{\log \left|q_n \right|}
\end{equation}
We found that two formulas sharing the same \(\delta\) is the strongest indication of a possible relation, since \(\delta\) is invariant under many transformations and choice of subsequences. Below, our UMAPS algorithm is used to derive and prove a relation once a pair of formulas share the same \( r \) and \(\delta\).

\subsection{Conservative Matrix Fields (CMFs)}
\label{section-math-background-cmf}
The CMF is the mathematical structure that generalizes formulas of a particular constant, originally found by generalizing PCFs \citep{doi:10.1073/pnas.2321440121}, and later realized to be more general (Appendix~\ref{appendix-cmf}). To exemplify the concept, we focus on the CMF of $\pi$. This CMF is 3D, i.e., consisting of three matrices \(M_\mathbf{x}, M_\mathbf{y}, M_\mathbf{z}\) with rational function entries in the variables $(x,y,z)$, satisfying:
\vspace{-2pt}
\begin{align*}
    M_\mathbf{x}(x, y, z) M_\mathbf{y}(x+1, y, z) &= M_\mathbf{y}(x, y, z) M_\mathbf{x}(x, y+1, z) \\
    M_\mathbf{x}(x, y, z) M_\mathbf{z}(x+1, y, z) &= M_\mathbf{z}(x, y, z) M_\mathbf{x}(x, y, z+1) \\
    M_\mathbf{y}(x, y, z) M_\mathbf{z}(x, y+1, z) &= M_\mathbf{z}(x, y, z) M_\mathbf{y}(x, y, z+1)
\vspace{-3pt}
\end{align*}
This property describes the path-independence of the transition between two points in a 3D lattice (lattice illustrated in \cref{fig:coboundary-schematic}b), 
in analogy to a conservative vector field. The CMF satisfies the properties of a discrete flat connection \citep{BobenkoAlexanderI.2008DDGI}.
For an explanation of how formulas reside as directions within the CMF, see Appendix \ref{appendix-section-trj-mat}. A notable feature of the CMF is that pairs of formulas found to be parallel trajectories with different initial points correspond to two matrices that are coboundary equivalent.

Many of the known $\pi$ formulas will be shown to reside within a single CMF (details in \cref{appendix Hypergeomtric-CMF}):
\vspace{-4pt}
\begin{equation}
\label{def:the_pi_cmf_in_main_text}
\renewcommand{\arraystretch}{0.8} 
\setlength{\arraycolsep}{2pt} 
\begin{minipage}[t]{0.27\textwidth}
\raggedright
$\displaystyle
M_\mathbf{x} =
\begin{pmatrix}
    1 & y \\
    \frac{1}{x} & \frac{2x + y - 2z + 2}{x}
\end{pmatrix}
$
\end{minipage}
\hspace{0.01\textwidth}
\begin{minipage}[t]{0.27\textwidth}
\centering
$\displaystyle
M_\mathbf{y} =
\begin{pmatrix}
    1 & x \\
    \frac{1}{y} & \frac{x + 2y - 2z + 2}{y}
\end{pmatrix}
$
\end{minipage}
\hspace{0.01\textwidth}
\begin{minipage}[t]{0.37\textwidth}
\raggedleft
$\displaystyle
M_\mathbf{z} =
\begin{pmatrix}
    \frac{z(-x - y + z)}{(y - z)(x - z)} & \frac{zxy}{(y - z)(x - z)} \\
    \frac{z}{(y - z)(x - z)} & \frac{-z^2}{(y - z)(x - z)}
\end{pmatrix}
$
\end{minipage}
\end{equation}
\vspace{-15pt}

\section{Methodology for symbolic unification of formulas} 
\label{section-methodology}
\vspace{-6pt}
\subsection{Harvesting: large-scale retrieval of formulas from the literature}
\label{subsection-engineering}%
\vspace{-2pt}
\begin{wrapfigure}{r}{0.6\linewidth}
    \vspace{-\intextsep}
    \centering
    \includegraphics[width=\linewidth]{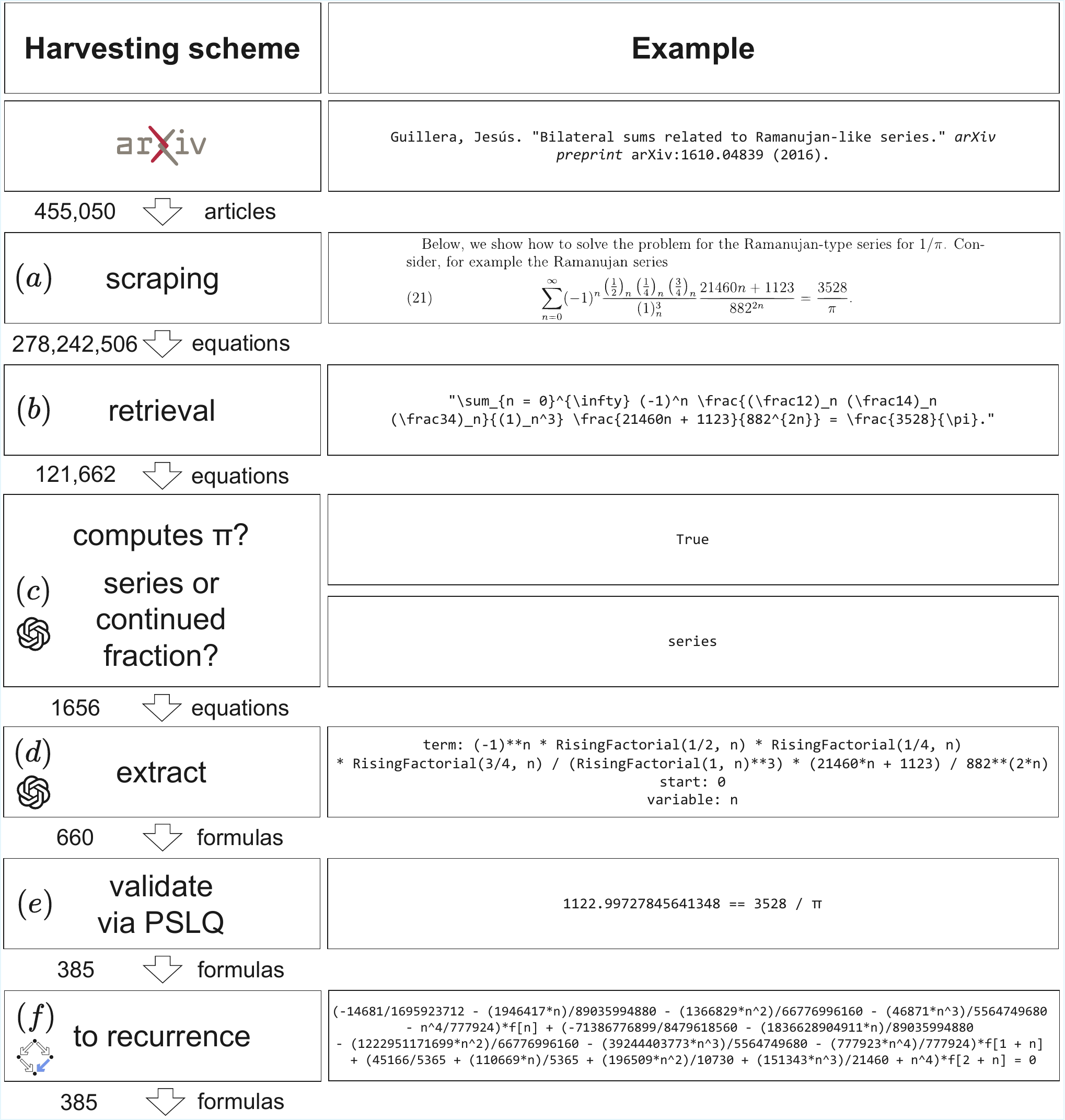}
    \vspace{-20pt}
    \caption{\textbf{Pipeline for automated harvesting of mathematical formulas (left), exemplified using one of the analyzed $\pi$ formulas (right)}. (a) Equations are scraped from papers on arXiv. (b) Regular expressions on the $\text{\LaTeX }$ strings retrieve series and continued fraction patterns that contain $\pi$ as the only irrational number (see \cref{appendix-engineering-formula-patterns}). (c) Zero-shot classification using OpenAI's GPT-4o mini identifies formulas calculating the constant $\pi$. Then, OpenAI's GPT-4o identifies the formula type (series, continued fraction, or neither). (d) Extraction of the series' summand or the continued fraction's partial numerator and partial denominator, using GPT-4o. The formula is then converted to code. (e) Formulas are computed and validated using the integer relation finder algorithm PSLQ.
    (f) The formulas are converted to canonical recurrences using RISC's tool for fitting recurrences \citep{kauers2022guessing}.}
    \label{fig:engineering-formula-extraction}
    \vspace{-30pt}
\end{wrapfigure}
The first challenge lies in the natural language processing of formulas. We analyze the $\text{\LaTeX }$ source code of 455,050 arXiv articles by combining regular expressions and LLMs, extracting all mathematical expressions, resulting in 278,242,506 strings. Filtering for expressions containing the $\pi$ symbol retrieves 121,662 $\pi$-related equations.
The widespread use of the symbol $\pi$ in scientific literature means that most occurrences are unrelated to calculating the constant itself. To address this, and keeping in mind that there is a priori very little data on what successful formulas' $\text{\LaTeX }$ looks like, each potential formula is classified as computing $\pi$ or not, using GPT-4o mini \citep{openai2023gpt4} (chosen for its cost-effectiveness), reducing the number of candidates to 3367. 
Next, GPT-4o categorizes formulas by type: series, continued fraction, or neither—resulting in 1656 formula candidates.

\vspace{-5pt}
\subsection{Harvesting: extraction and validation}
\label{subsection-engineering-validation}
\vspace{-5pt}
The extraction and validation stages rely on an LLM-code feedback loop that feeds a PSLQ algorithm. Each equation, represented as a $\text{\LaTeX }$ string, must then be parsed into a Computer Algebra System (CAS) for further manipulations (in our case, SymPy \citep{sympy}). Automatically extracting algebraic forms from $\text{\LaTeX }$ strings is especially complicated due to varied $\text{\LaTeX }$ patterns, which are difficult to systematically convert to executable code using a predefined logic.
LLMs help us overcome these obstacles by processing text contextually and attending to relevant parts of the text, solving the natural language processing task that may have required elaborate rules \citep{radford2019language, brown2020language}. 
Specifically, we use OpenAI's GPT-4o to translate relevant $\text{\LaTeX }$ into executable mathematical code \citep{eniser2025translatingrealworldcodellms, pan_lost_in_translation, zhu-etal-2024-multilingual} (see \cref{appendix-engineering} for the exact prompts used).
To correct for (common) mistakes in the LLM-generated formula code, we apply an LLM-code feedback loop for code validation: errors are sent back to the LLM along with the faulty code to correct it, up to three times (see \cref{appendix-llm-code-correction}).

We validate that each extracted formula computes the constant $\pi$ by running the formula code to get a numerical approximation and then applying PSLQ, an integer relation algorithm \citep{PSLQ}. 
Limit values are not extracted directly from the $\text{\LaTeX }$ string for validation, since we found that GPT-4o got them wrong in some cases (see \cref{tab:appendix-engineering-llm-example-wrong-value}). Instead, the PSLQ approach fixes these critical GPT mistakes and reproduces the intended formulas. Out of the 660 candidates, 385 were validated as $\pi$ formulas and passed on for canonicalization (details in \cref{appendix-engineering-formula-validation}).

\subsection{Clustering: using the canonical form}
\label{section-formula-canonicalization}

The first unification step is converting each formula to its canonical form: the simplest linear recurrence with polynomial coefficients (\cref{appendix-canonical-form}). 
Automated algebraic capabilities are unpredictable in solving such tasks. Thus, we use a computational method for converting the formulas to polynomial recurrences: a Mathematica package by RISC \citep{kauers2022guessing} that fits polynomial-coefficient linear recurrences to each sequence of rational numbers. The resulting recurrences are validated numerically and passed to a Maple package to guarantee order minimality \citep{mapleminimalrec, zhouMinRec}, thus finding the provably minimal polynomial recurrence.
Out of the 385 validated formulas (\cref{subsection-engineering}), 380 are found to have representations as order-2 recurrences, and 5 as order-3 recurrences, which can also be addressed as we show in \cref{appendix-risc-guess-results} and \cref{appendix-algs-coboundary-algorithm}. 

The same canonical form captures a wide range of formulas, continued fractions and infinite sums. Thus, the conversion to canonical forms automatically unifies different formulas, yielding 149 different order-2 canonical forms and 4 order-3 canonical forms for $\pi$, 153 in total, from 385 formulas (selected examples in \cref{tab:canonicalization-examples}).

\begin{table*}[th!]
    \vspace{-8pt}
    \caption{\textbf{Canonical form representation}. Converting formulas to their canonical forms shows equivalence of different-looking expressions (e.g. 1,2), leaving the less-trivial connections for the later steps of the algorithm.
    Additional details in  \cref{appendix-algs-canonicalization}.}
    \label{tab:canonicalization-examples}
    \centering
    \vspace{-8pt}
    \begin{center}
    \resizebox{\textwidth}{!}{%
    \begin{tabular}{lcccccc}
        \toprule
        & Formula & Value & arXiv source & Canonical form (CF) & CF value & Initial conditions \\
        \midrule
        1
        & $\sum_{n=0}^{\infty} \frac{n!}{\prod_{i = 1}^{n}(2i+1)}$
        & $\frac{\pi}{2}$
        & 1806.03346
        & $\text{PCF}(3n+1, n(1-2n))$
        & $\frac{2}{\pi}$
        &
        $\begin{bmatrix}
            0 & 1 \\
            1 & 1
        \end{bmatrix}$ \\
        2
        &
        $\sum_{n=1}^{\infty} \frac{2^n}{n\binom{2n}{n}}$
        & $\frac{\pi}{2}$
        & 2010.05610
        & $\text{PCF}(3n+1, n(1-2n))$
        & $\frac{2}{\pi}$
        &
        $\begin{bmatrix}
            0 & 1 \\
            1 & 1
        \end{bmatrix}$ \\
        3 
        & $\sum_{n=0}^{\infty} \frac{(-1)^n}{2n+1}$
        & $\frac{\pi}{4}$
        & 2404.15210
        & $\text{PCF}(2, (2n-1)^2)$
        & $1+\frac{4}{\pi}$
        &
        $\begin{bmatrix}
            0 & 1 \\
            1 & 1
        \end{bmatrix}$ \\
        4 %
        & $\sum_{n=1}^{\infty} \frac{(-1)^{n+1}}{n(n+1)(2n+1)}$
        & $\pi - 3$
        & 2206.07174 
        & $\text{PCF}(6, (2n+1)^2)$
        & $\frac{1}{\pi-3}$
        &
        $\begin{bmatrix}
            0 & 1 \\
            1 & 6
        \end{bmatrix}$ \\
        \midrule
        5 %
        & $\sum_{n=1}^{\infty} \frac{4^n(12n-5)}{(2n-1){\binom{4n}{2n}}}$
        & $\frac{3\pi + 4}{2}$
        & 2204.08275
        & *
        &
        $\frac{-42\pi - 196}{3\pi+4}$
        &
        $\begin{bmatrix}
            0 & 70 \\
            -1 & 15
        \end{bmatrix}$ \\
    \end{tabular}
    }
    \resizebox{\textwidth}{!}{
    \begin{tabular}{lcccc}
        & & $\text{* PCF}(240 n^3 + 164 n^2 - 54 n - 29, -9216 n^6 + 12288 n^5 + 11264 n^4 - 15520 n^3 - 764 n^2 + 3802 n - 714)$ & & \\
        \bottomrule
    \end{tabular}
    } 
    \end{center}
    \vspace{-14pt}
\end{table*}

\subsection{Clustering: using the dynamical metrics}
\label{section-methodology-metrics}

The clustering stage is a heuristic to guide which formulas should be attempted to be proven equal using UMAPS.
Formulas with the same metrics are likely to be related to the same constant \citep{BlindDelta}. The metrics also indicate a more intricate connection, enabling the unification of formulas in a systematic way that proves an analytical transformation between them.
Canonical-form formulas are first compared to each other using the irrationality measure $\delta$ (\cref{fig:coboundary-steps}a), which is the most reliable indicator for a potential equivalence.
Every new formula is first evaluated relative to directions in the CMF corresponding to recurrences with the same $\delta$. This search can be improved by using gradient descent on the direction parameters, because $\delta$ is found to be continuous \citep{doi:10.1073/pnas.2321440121}.

We found that $\delta$ is not sufficient to imply equivalence, and thus we complement it using the ratio of convergence rates $r_A:r_B$. Canonical form A is \textit{folded} (\cref{appendix-maths-fold})
by $r_B$ and canonical form B is folded by $r_A$ (\cref{fig:coboundary-steps}b), making them converge at the same rate. The next step is finding their precise algebraic relation using UMAPS.

\begin{wrapfigure}{l}{0.5\linewidth}
    \vspace{-8pt}
    \centering\includegraphics[width=\linewidth]{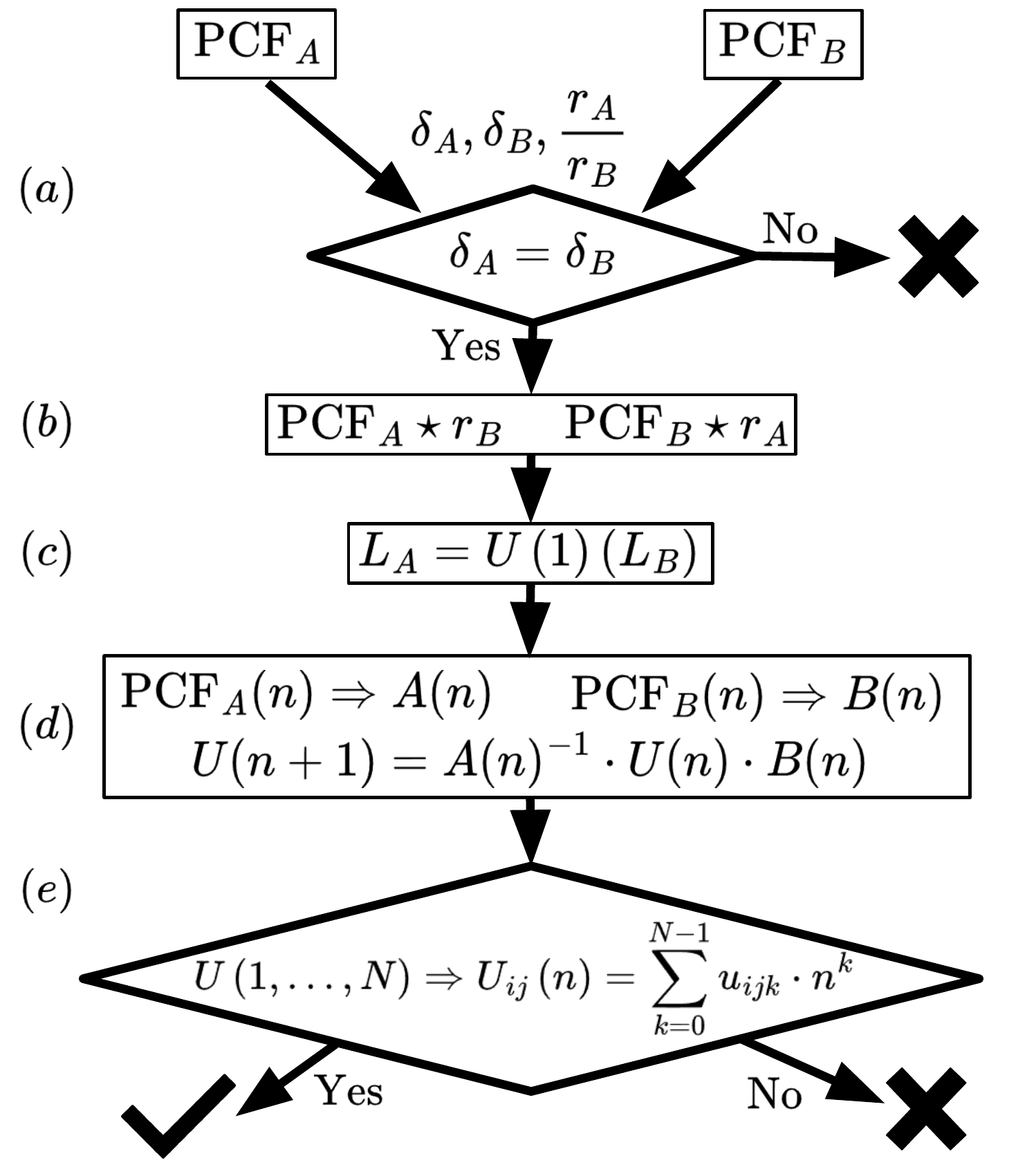}
    \vspace{-12pt}
    \caption{\textbf{The matching algorithm: connecting polynomial linear recurrences.} This algorithm is demonstrated here for polynomial continued fractions (PCFs) but can be generalized to any linear polynomial recurrence. (a) Compute the dynamical metrics \citep{BlindDelta} for the two PCFs (irrationality measures $\delta_A$, $\delta_B$ and the convergence rates ratio $r_A/r_B$). The $\delta$ metrics are used to identify possible connections, as only if $\delta_A = \delta_B$, the PCFs can be related via coboundary (in practice, we test for them to be within $0.06$ of each other). (b) \textit{Fold} $\mathrm{PCF}_A$ by $r_B$ and $\mathrm{PCF}_B$ by $r_A$ (\cref{appendix-maths-fold}). \textbf{UMAPS (c)-(e):} (c) Solve for a general Möbius transform (a $2\times2$ matrix $U(1)$) that once applied to the limit of $\mathrm{PCF}_B$ equates it to the limit of $\mathrm{PCF}_A$. (d) Representing the PCFs in matrix form ($A (n)$ and $B (n)$), propagate the coboundary matrix via the relation $U(n+1) = A(n)^{-1} \cdot U(n) \cdot B(n)$ up to $U(N)$ ($N=40$ was sufficient for our runs, see \cref{appendix-sensitivity-study}).
    (e) Assume the general form of $U(n)$ to have rational-function entries with polynomial degree up to $\left\lfloor \frac{N-1}{2} \right\rfloor$ and solve for their coefficients using normalized $U(1, \ldots ,N)$.
    If such a solution is found and validated, the PCFs are coboundary-related. See \cref{appendix-algs} for more details.
    }
    \label{fig:coboundary-steps}
    \vspace{8pt} 
\end{wrapfigure}

\subsection{Unification: using the UMAPS algorithm for coboundary equivalence}
\label{section-methodology-UMAPS}

Our algorithm for unification via mapping across symbolic structures (UMAPS) relies on the established concept of coboundary equivalence (Appendix \ref{appendix-Coboundary-transform}), however, no specialized coboundary solver existed prior to this work.

$A(n),B(n)\in \operatorname{PGL}_m\left(\mathbb{Q}(n)\right)$ are \textit{coboundary equivalent} if there exist a matrix $U(n)$ such that 
\vspace{-6pt}
\begin{equation}
    \label{def:cbdry_rational_matix}
\scalebox{0.9}{
    $A(n) \cdot U(n+1)  =   U(n) \cdot B(n)
     $
     }
     \vspace{-1pt}
\end{equation}
This definition carries to recurrences when their companion matrices (\cref{def:companion-form}) are coboundary equivalent (\cref{fig:coboundary-schematic}a,d) and then:
\vspace{-2pt}
\scalebox{0.9}{
$\left(\prod_{i=1}^n A(i)\right)\cdot U(n+1)  = U(1)\cdot\left(\prod_{i=1}^n B(i)\right)
\vspace{-1pt}$
}.

Since any matrix with rational function coefficients can be scaled to have polynomial coefficients, we can write that $A(n), B(n) \in \operatorname{GL}_m(\mathbb{Q}[n])$ are \textit{coboundary equivalent} if there exist a matrix $U(n) \in \operatorname{GL}_m(\mathbb{Q}[n])$ and polynomials $p_A(n), p_B(n) \in \mathbb{Q}[n]$ such that
\vspace{-1pt}
\begin{equation}
    \label{def:cbd_equiv_poly}
    \scalebox{0.8}{
   $p_A(n) \cdot A(n) \cdot U(n+1)  =  p_B(n) \cdot U(n) \cdot B(n)$
   }
\end{equation}
Finding a coboundary between two polynomial matrices is inherently a non-linear problem due to the product of unknown polynomials \(p_A\) and \(p_B\) with unknown coboundary matrix $U$. Moreover, the degree of each polynomial is not known.
Despite the non-linearity, we found a coboundary solver algorithm for general order $m$ (\cref{appendix-algs-coboundary-algorithm}).

UMAPS finds the solution without solving nonlinear equations, instead leveraging the recurrence limits to compute a sequence of empirical coboundary matrices, whose elements are fitted to rational functions \citep{stoer2002numerical}.
The algorithm relies on the following lemma:
\begin{lemma}
\label{lemma-necessary-condition-for-coboundary-matrix}
    (A necessary condition on the coboundary equivalence matrix.) Let $L_A = \displaystyle \lim_{n\to \infty} PCF\left( a(n),b(n) \right) \text{ and } L_B = \lim_{n\to \infty} PCF\left( c(n),d(n) \right) \text{ be \hspace{0.05em} converging \hspace{0.05em} PCFs}$ with associated companion matrices $A(n),B(n)\in \operatorname{PGL}_2\left(\mathbb{Q}(n)\right)$. If $A(n)$ is coboundary to $B(n)$, then $L_A$ and $L_B$ are related through a rational Möbius transformation. Moreover, if $U(n)$ is the coboundary matrix, then \,\, $L_A=U(1)(L_B)$ \,\,\,($U(1)$ applied to $L_B$ as a Möbius transformation).
\end{lemma}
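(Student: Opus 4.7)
The plan is to iterate the coboundary relation so that the intermediate $U(n)$ telescope out, then interpret the resulting identity as an equality of Möbius transformations on $\mathbb{P}^1$, and finally pass to the limit $N\to\infty$.

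Starting from $A(n)\cdot U(n+1) = U(n)\cdot B(n)$ in $\operatorname{PGL}_2(\mathbb{Q}(n))$ (where the auxiliary polynomials $p_A, p_B$ of (\ref{def:cbd_equiv_poly}) drop out projectively), multiplying the $N$ relations for $n=1,\ldots,N$ telescopes to
\[
S_A(N)\cdot U(N+1) \;=\; U(1)\cdot S_B(N) \quad \text{in }\operatorname{PGL}_2,
\]
where $S_A(N)=\prod_{i=1}^N A(i)$ and $S_B(N)=\prod_{i=1}^N B(i)$ are the step matrices of (\ref{eq:step_matrix}). I would then view both sides as Möbius transformations on $\mathbb{P}^1$ and apply them to a projective starting point $z_0$. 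On the right, $S_B(N)(z_0)\to L_B$ for generic $z_0$ by the very definition of the PCF limit (both columns of $S_B(N)$ carry consecutive convergents of $\mathrm{PCF}(c,d)$), and since the fixed Möbius map $U(1)$ is continuous at $L_B$, the right-hand side tends to $U(1)(L_B)$.

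The left-hand side is $S_A(N)\bigl(U(N+1)(z_0)\bigr)$. Although the argument moves with $N$, both columns of $S_A(N)$ project to $L_A$ (they carry consecutive convergents of $\mathrm{PCF}(a,b)$), so asymptotically $S_A(N)$ collapses all of $\mathbb{P}^1$ onto the single point $L_A$, with at most one exceptional starting direction $z_A^{\star}$ (the subdominant eigendirection of the recurrence). Whenever the sequence $U(N+1)(z_0)$ stays away from $z_A^{\star}$, the left-hand side also tends to $L_A$, and matching the two limits yields the claimed relation $L_A = U(1)(L_B)$. Since $U(1)$ has rational entries (up to scalar, being the evaluation at $n=1$ of a polynomial-entry matrix after clearing denominators), this is a rational Möbius transformation.

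The main obstacle is analytic: guaranteeing that the moving argument $U(N+1)(z_0)$ does not accumulate at the exceptional direction $z_A^{\star}$. I would resolve this by running the same identity simultaneously at several distinct starting points, e.g.\ $z_0\in\{0,1,\infty\}$. Since $U(N+1)\in\operatorname{PGL}_2$ is invertible and a Möbius map is determined by its action on three points, the three trajectories $\{U(N+1)(z_0)\}_{z_0}$ cannot all collapse onto the single point $z_A^{\star}$ without $U(N+1)$ degenerating, which is excluded since $\det U(n)$ is a nonzero rational function of $n$. Hence at least one choice of $z_0$ forces $S_A(N)(U(N+1)(z_0))\to L_A$, and combined with the right-hand side limit this establishes $L_A = U(1)(L_B)$.
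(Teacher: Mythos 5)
Your skeleton is the same as the paper's: telescope the coboundary relation to $\bigl(\prod_{i=1}^{N}A(i)\bigr)U(N+1)=U(1)\bigl(\prod_{i=1}^{N}B(i)\bigr)$, apply both sides as Möbius maps to a base point, obtain $U(1)(L_B)$ on the right by continuity, and argue the left side tends to $L_A$ because the columns of the step matrix of $A$ carry consecutive convergents. The gap is in the one delicate step you correctly identify — keeping the moving argument away from the repelling direction of $S_A(N)$ — and your three-base-point fix does not close it. Invertibility of $U(N+1)$ for every finite $N$ (equivalently $\det U(n)\not\equiv 0$) does not prevent the three image trajectories $U(N+1)(0),\,U(N+1)(1),\,U(N+1)(\infty)$ from collapsing asymptotically onto a single moving point: e.g.\ $U(n)=\begin{pmatrix} n^{2} & n^{2}\\ n & n+1\end{pmatrix}$ has $\det U(n)=n^{2}\neq 0$, yet $U(n)(z)\to\infty$ for every fixed $z\neq -1$, so the images of $0,1,\infty$ all coalesce in the limit. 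Since the exceptional direction of $S_A(N)$ also moves with $N$ (it is essentially $-q_N/q_{N-1}$), nothing in your argument excludes the scenario in which all tested arguments track that direction; in that case the left-hand side still converges (it equals the right-hand side, which converges to $U(1)(L_B)$), but possibly to a value different from $L_A$, and no contradiction arises. So the limit identification $S_A(N)\bigl(U(N+1)(z_0)\bigr)\to L_A$ is not established.

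The paper closes exactly this step differently: it fixes the base point at $0$, so the argument of $S_A(N)$ is the explicit rational function $U(N+1)(0)=\beta(n)/\delta(n)$ with $\beta,\delta$ the (polynomial) entries of the second column of $U$, and rewrites the left side as $\frac{p_{n-1}}{q_{n-1}}\cdot\frac{\beta(n)/\delta(n)+p_n/p_{n-1}}{\beta(n)/\delta(n)+q_n/q_{n-1}}$. Because $p_n/q_n$ converges, both $p_n$ and $q_n$ contain the dominant solution of the recurrence, so $p_n/p_{n-1}$ and $q_n/q_{n-1}$ share the same asymptotics and the second factor tends to $1$, giving $L_A$ on the left and hence $L_A=U(1)(L_B)$. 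To repair your proof you would need a growth comparison of this kind — relating the rational function $U(N+1)(z_0)$ to $q_N/q_{N-1}$ — rather than the pointwise-invertibility/three-point claim.
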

A proof and generalization to higher-order recurrences (\cref{lemma-generalized-necessary-condition-for-coboundary-matrix}), as well as a proof of the uniqueness of the coboundary matrix (\cref{lemma-coboundary-matrix-uniqueness}), are detailed in \cref{appendix-coboundary-matrix-properties}. These combine to show that UMAPS is sufficient to solve for the coboundary matrix, as stated in \cref{corollary-suffiency-of-umaps} (proof in \cref{appendix-algs-coboundary-algorithm}).

\begin{corollary}
\label{corollary-suffiency-of-umaps}
    (Sufficiency of UMAPS.) If a coboundary matrix exists for two matrices and every rational-function entry of the coboundary matrix has polynomials of degree at most $d$, then running UMAPS with $N \geq 2d + 1$ suffices to recover the coboundary matrix.
\end{corollary}

\cref{fig:coboundary-steps} summarizes the flow of matching two canonical form formulas. Using this method, we find that formulas 1,2 and 5 from \cref{tab:canonicalization-examples} are equivalent and that formulas 3,4 are also equivalent. Refer to \cref{appendix-results-cmf-examples} for descriptions of how the algorithm is applied to these formulas, and refer to \cref{appendix-algs} for a listing of the algorithms involved. A study of the algorithms' sensitivity to hyperparameters is provided in \cref{appendix-sensitivity-study}.
\begin{figure}
    \centering
    \vspace{-10pt}
    \includegraphics[width=1\linewidth]{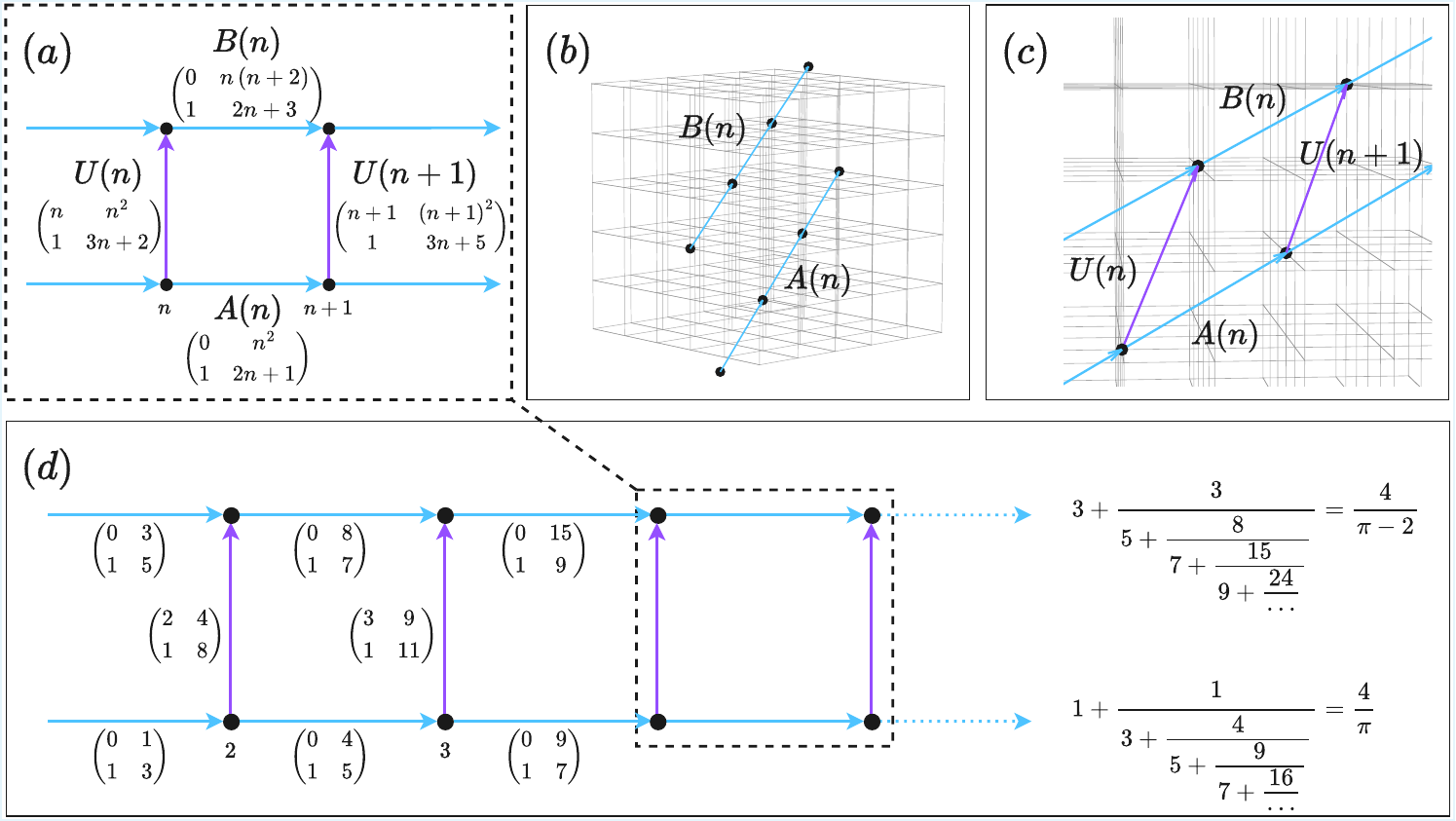}
    \caption{ \textbf{Coboundary equivalence}: the mathematical framework connecting different formulas once cast into their canonical forms. (a) The coboundary condition $A(n) \cdot U(n+1) = U(n) \cdot B(n)$ recasts formulas as (b,c) parallel trajectories in a CMF. 
    (d) Example of two coboundary-equivalent formulas, presenting their coboundary matrices and limits, which constitute proof of a novel equivalence.}
    \label{fig:coboundary-schematic}
    \vspace{-12pt}
\end{figure}
The same procedure is applied to each canonical form formula, measuring its $\delta$ value and relying on its continuity as a function of direction in the CMF to locate worthy directions that yield potential formula pairs for the coboundary algorithm.
The matching algorithm is then applied between formulas and representative recurrences from the CMF. Finding a match between a formula and a CMF representative proves the formula is generated by the CMF. The full list of results is provided in \cref{appendix-results-tables}.
Selected results for $\pi$ are detailed in \cref{section-results}.
\vspace{-16pt}

\section{Benchmarking}
\label{section-benchmarking}
\vspace{-5pt}

\subsection{Comparison to other methods for symbolic unification}
\vspace{-3pt}
Our work is the first to address the problem of symbolic unification at scale, thus there are no standard benchmarks for performance comparison.
Leading LLMs are generally unable to address the full challenge.
As an example, we compare our equivalence detection and proving capabilities to those of LLMs: We tasked 2 leading LLMs---GPT-4o and Gemini 2.5 Pro Preview---with identifying and proving 10 formula-pair equivalences proven by our algorithm (\cref{tab:llm_equivalence_proofs}). The formulas are chosen such that each pair has equal dynamical metrics ($r,\delta$) after folding, which is the simpler situation to prove (parallel trajectories in the CMF).
Even with these simpler tasks, the LLMs exhibit only limited success. 
We did not find cases in which any LLM succeeded in finding relations between a pair of formulas without equal dynamical metrics.
\vspace{-6pt}
\begin{table}[h!]
    \caption{LLM performance when detecting and proving equivalence in a random set of 10 formula pairs of equal dynamical parameters (\cref{appendix-llm-equivalence-proof}). All LLM proofs were validated manually.
    \vspace{-4pt}}
    \centering
    \resizebox{0.6\textwidth}{!}{%
    \begin{tabular}{cccc}
        \toprule
         LLM & Successful detections & Correct proofs \\
         \midrule
         GPT-4o & 1/10 & 2/10 \\
         Gemini 2.5 Pro Preview & 8/10 & 5/10 \\
         \bottomrule
    \end{tabular}
    }
    \label{tab:llm_equivalence_proofs}
\end{table}
\vspace{-1pt}

\subsection{Comparison of LLM model performance}
\vspace{-2pt}
We utilize LLMs for classification and extraction in two different ways. \cref{tab:llm_harvesting_performance} compares the performance of three choices for the extractor LLM, which we found to be the more sensitive choice, as it is used for the more advanced LLM-code feedback loop.
A ground truth is established by merging the validated formulas (\cref{subsection-engineering-validation}) found by the three compared LLMs.

\begin{table}[h!]
    \caption{
    Performance of different extractor LLM choices in terms of successfully harvested formulas. The LLM errors are split to ``faulty code" for code that did not run, and ``symbolic mistake" for incorrect identification of some of the formula constituents like continued fraction polynomials. The bold row marks the choice of LLMs used for all the rest of the results in this paper.
    }
    \label{tab:llm_harvesting_performance}
    \centering
    
    \resizebox{\textwidth}{!}{%
    
    \begin{tabular}{cccccc}
    \toprule
    LLM classifier & LLM extractor & Successful extractions & Faulty code & Symbolic mistake \\
    \midrule
    \textbf{GPT-4o mini}&\textbf{GPT-4o} & $\mathbf{289}$ ($\mathbf{97.6\%}$) & $\mathbf{2}$ ($\mathbf{0.7\%}$) & $\mathbf{5}$ ($\mathbf{1.7\%}$) \\
    GPT-4o mini&Claude 3.7 Sonnet & $266$ ($89.9\%$) & $21$ ($7.1\%$) & $9$ ($3.0\%$) \\
    GPT-4o mini & GPT-4o mini & $206$ ($69.6\%$) & $70$ ($23.6\%$) & $20$ ($6.8\%$) \\
    \bottomrule
    \end{tabular}
    }
\end{table}
\vspace{-12pt}

\FloatBarrier

\section{Results}
\label[section]{section-results}

\subsection{Example equivalences among famous formulas}

Our automated system proves previously unknown equivalences between formulas. Among the formulas connected are famous examples, such as 
one of Ramanujan's 1914 formulas, 
as well as Lord Brouncker's, Euler's, and Gauss's PCFs from the 17\textsuperscript{th}, 18\textsuperscript{th}, and 19\textsuperscript{th} centuries \citep{Osler15012010, euler1748introductio, Gauss1813}. For example, the following series found by Ramanujan in 1914 \citep{Ramanujan1914},
\vspace{-1pt}
\begin{equation}
    \label{eq:ramanujan-1914}
        \scalebox{0.9}{
        $\frac{4}{\pi} = \sum_{k=0}^\infty (-1)^k \frac{\left(\frac{1}{2}\right)_k \left(\frac{1}{4}\right)_k \left(\frac{3}{4}\right)_k}{(1)_k^3} \cdot (1123 + 21460k) \cdot \left(\frac{1}{882}\right)^{2k+1}$
        }
\end{equation}
was proven equivalent (\cref{appendix-results-equivalence-example-pi}) to a newer series from a paper published in 2020 \citep{sun2020newseriespowerspi}:
\begin{equation}
    \label{eq:sun-series}
    \scalebox{0.9}{
    $\frac{341446000}{\pi} = \sum_{k=0}^\infty \frac{ \binom{2k}{k}^2 \binom{4k}{2k}}{(k+1)(2k-1)(4k-1)(-2^{10} 21^4)^k} \cdot \left(1424799848k^2 + 1533506502k + 1086885699\right)$
    }
\end{equation}
This equivalence demonstrates how two previously distinct mathematical expressions, discovered over a century apart, are now proven to be equivalent by an automated process.

Fig.~\ref{fig:coboundary-schematic}d proves the equivalence of another pair of famous formulas: (1) $\text{PCF}(2n+3, n(n+2))$, one of the first computer-discovered $\pi$ formulas from 2021 \citep{Raayoni2021}. (2) $\text{PCF}(2n+1, n^2)$, published by Gauss in 1813 \citep{Gauss1813} and provided at the time an especially efficient way to calculate digits of $\pi$.

\subsection{Formulas unified by a Conservative Matrix Field (CMF)}
The CMF of $\pi$, \cref{def:the_pi_cmf_in_main_text}, captures most of the harvested formulas (\cref{tab:unification_results}), with selected examples presented graphically in Fig.~\ref{fig:cmf-unification} along with their corresponding trajectories.

\begin{figure*}[ht!]
    \centering
    \includegraphics[width=1\linewidth]{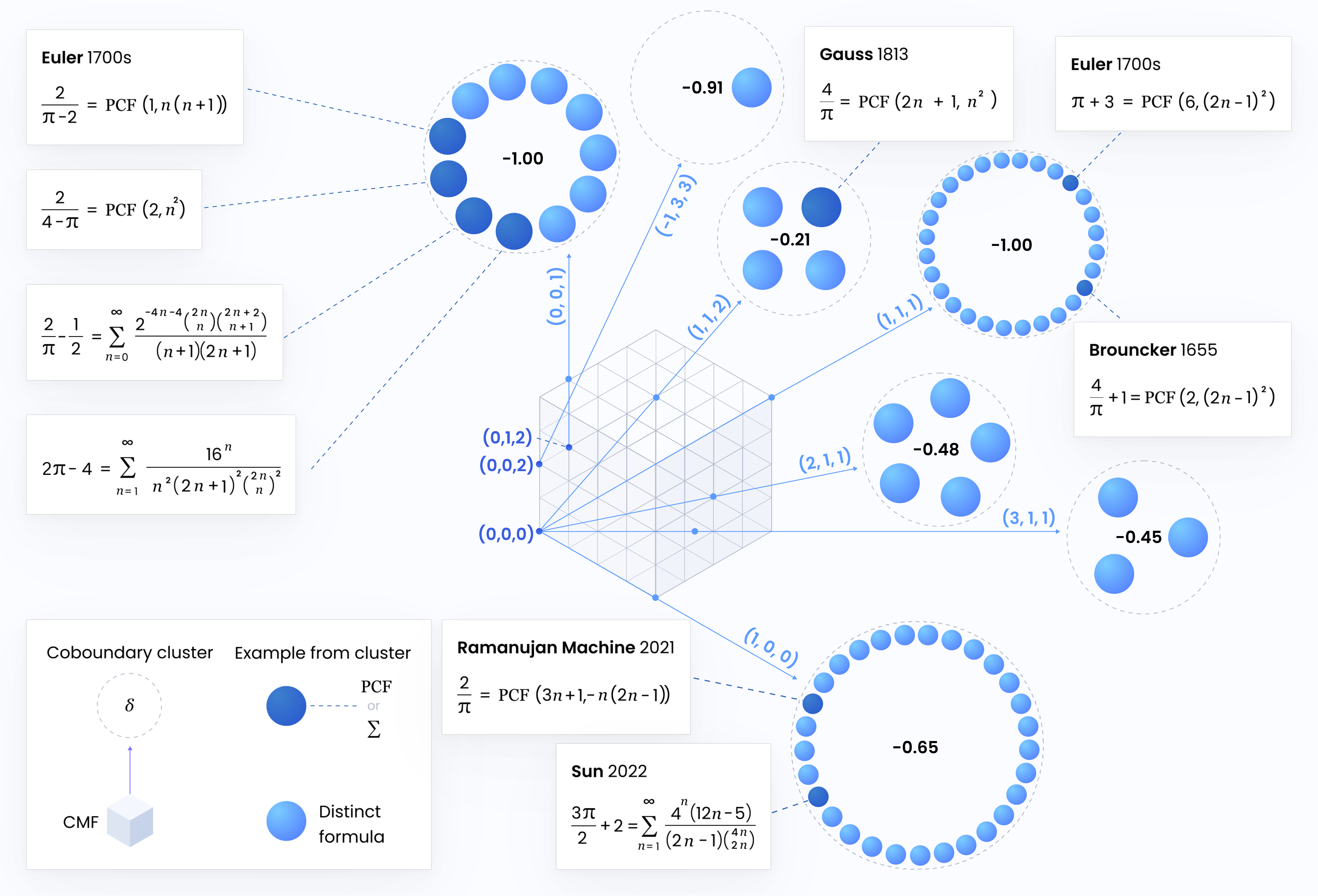}
    \vspace{-20pt}
    \caption{\textbf{Formula unification by a Conservative Matrix Field (CMF)}. Numerous $\pi$ formulas harvested from the literature are automatically arranged as trajectories in a 3D CMF. These formulas include famous ones by Gauss, Euler,
    and Lord Brouncker. The full list of unified formulas and their canonical forms is available in \cref{tab:unified-formulas}. 
    Each cluster (large dashed circles) denotes formulas connected by coboundary, representing parallel trajectories or overlapping trajectories.
    The number at each cluster center is the $\delta$ of all formulas in that cluster.
    Arrows show trajectory directions. Note that multiple formula clusters can have the same $\delta$ value without being coboundary, showing that sharing $\delta$ is necessary but not sufficient for formulas being coboundary-related.}
    \label{fig:cmf-unification}
\end{figure*}
\begin{table}[h!]
    \centering
    \caption{\textbf{Summary of unification results}, among all validated formulas (left columns) and among the canonical forms (right columns). Formulas are harvested from 140 arXiv papers (\cref{tab:formula_sources}), of which 137/140 (98\%) have at least one formula proved connected by UMAPS and 70/140 (50\%) have a formula residing in the same CMF.}
    \resizebox{\textwidth}{!}{%
    \begin{tabular}{cccc}
        \toprule
         Found relation & Same CMF & Found relation (canonical) & Same CMF (canonical) \\
         360/385 (94\%) & 166/385 (43\%)  & 136/153 (89\%) & 81/153 (53\%) \\
         \bottomrule
    \end{tabular}
    }
    \label{tab:unification_results}
\end{table}
\vspace{-4pt}

The full list of canonical forms captured by the CMF appears in \cref{tab:unified-formulas}. 
Improvements in UMAPS are likely to connect additional formulas (\cref{tab:formulas-not-yet-unified}) to the same CMF.

\subsection{Unification of formulas beyond \texorpdfstring{$\pi$}{pi}}
\label{subsection-results-other-constants}

Going beyond $\pi$, we automatically identified equivalent formulas for $e$, $\zeta(3)$, and Catalan’s constant—demonstrating the generality of the approach. Consider these two formulas for Ap\'{e}ry's constant, the Riemann zeta function value $\zeta(3)$:
\vspace{-4pt}
\begin{equation}
\label{eq:zeta3}
\begin{minipage}[t]{0.4\textwidth}
\resizebox{0.3\textwidth}{!}{%
\raggedright
$\displaystyle
\zeta(3) = \sum_{n=1}^{\infty} \frac{1}{n^3}
$
}
\end{minipage}
\hfill
\begin{minipage}[t]{0.4\textwidth}
\resizebox{0.6\textwidth}{!}{%
\raggedleft
$\displaystyle
\frac{5}{4} - \zeta(3) = \sum_{n=2}^{\infty} \frac{1}{n^3(n^2 - 1)}
$
}
\end{minipage}
\end{equation}
The second formula \citep{kummer1837neue} has faster convergence compared to the classical definition of $\zeta(3)$, though both converge polynomially.
Our automatic procedure proves their equivalence by the coboundary transform and unifies them in the $\zeta(3)$ CMF (detailed in \cref{appendix-zeta3-unification}).

As another example, the following two PCFs for Catalan's constant \citep{catalan_representations} are also proven equivalent by UMAPS (\cref{appendix-results-equivalence-example-catalan}).
\vspace{-2pt}
\begin{equation}
\label{eq:catalan}
\begin{minipage}[t]{0.4\textwidth}
\resizebox{0.6\textwidth}{!}{%
\raggedright
$\displaystyle
\frac{1}{2 - 2G} = 3 + \cfrac{2^2}{1 + \cfrac{2^2}{3 + \cfrac{4^2}{1 + \cfrac{4^2}{\cdots}}}}
$
}
\end{minipage}
\hfill
\begin{minipage}[t]{0.4\textwidth}
\resizebox{0.6\textwidth}{!}{%
\raggedleft
$\displaystyle
\frac{1}{2G - 1} = \frac{1}{2} + \cfrac{1^2}{\frac{1}{2} + \cfrac{1\cdot2}{\frac{1}{2} + \cfrac{2^2}{\frac{1}{2} + \cfrac{2\cdot3}{\cdots}}}}
$
}
\end{minipage}
\end{equation}
A natural next step is to conduct exhaustive searches for other well-known constants, and fundamental mathematical structures in fields such as physics and computer science. See \cref{appendix-e-unification} for $e$ examples.

\subsection{Formulas generated via a Conservative Matrix Field (CMF)}

A sample of 1693 distinct $\pi$ formulas was generated from the $\pi$-CMF (per \cref{append-algs-cmf-trajectories}).
The CMF permits a new method of comparing between formulas, using the \textit{normalized convergence rate}, defined $r / \ell^1(t)$, where $r$ is the convergence rate from \cref{def:convergence_rate}, $t$ is a trajectory and $\ell^1$ is the  $\ell^1$-norm. 57 of the formulas generated in our runs shared the best normalized $r$ of 1.76, such as the formula

\begin{equation*}
\resizebox{0.5\textwidth}{!}{%
${\huge \frac{10}{4-\pi}} = 12 + \cfrac{-81}{238 + \cfrac{-6500}{968 + \cfrac{-67473}{\ddots + \cfrac{n^{2}(-64n^{4} - 96n^{3} + 12n^{2} + 52n + 15)}{48n^{3} + 108n^{2} + 70n + 12 + \ddots}}}}$
}
\end{equation*}

arising from trajectory $(-1, -1, 0)$. By comparison, the best pre-existing formula unified by our CMF has normalized convergence of 0.88 (the $(1,1,2)$ direction, see \cref{tab:unified-formulas}). Details in \cref{append-cmf-scan}.

\section{Discussion}
\label{section-discussion}
\subsection{Limitations}
Currently, the harvesting step relies on the LLM's ability to interpret and contextualize mathematical $\text{\LaTeX }$ strings.
This step likely introduces data loss and false negatives in formula classification.
Improvements in prompt engineering and in validation techniques will enhance the robustness of this LLM use.
As more advanced LLMs become available, this step will become increasingly reliable.

Formulas often include additional symbols other than summation indices, like variables defined in the text surrounding the formula, which should be extracted and substituted into formula evaluation. We made several such substitutions manually to test the rest of the pipeline for these special cases. Future improvements of the unification pipeline can address this limitation by more advanced use of LLMs and automated validation.

Most formulas analyzed in this work are series or continued fractions. However, UMAPS and all the other steps in our harvesting and clustering processes are applicable more broadly (to any formula that generates a sequence of rational approximations for a given constant, e.g., deeper recurrences). Expanding the system to accommodate additional cases is a promising direction for future work.

The same unification pipeline shown here could apply to the vast family of constants derived from D-finite functions by finding their corresponding CMFs \citep{weinbaum2025conservativematrixfieldscontinuous}.

\subsection{Outlook}
Increasing the dimension and rank of the $\pi$-CMF, along with further improvements to UMAPS \citep{jakobAck}, is likely to yield a higher percentage of unified formulas in the near future. A planned future study will employ the CMF to systematically search for fast-converging and irrationality-proving formulas.

Looking forward, the same approach of collection, analysis, and organization of mathematical knowledge could help establish rigorous connections between different branches of mathematics.
The methodology presented in this work could help develop more general frameworks for identifying connections between different scientific theories through their mathematical representations.
As the volume of information grows at an accelerating pace, finding automated ways to unify knowledge will become increasingly essential, especially with the goal of providing more intuitive understanding on complex concepts.

Pairing LLMs with pre-existing and novel tools for symbolic and numerical mathematics enabled the automated discoveries in this paper. We believe this LLM--tool integration scheme will continue to advance AI for mathematics and science in the coming years.

\FloatBarrier

\newpage

\section*{Acknowledgments}

This research received support through Schmidt Sciences, LLC.

\bibliographystyle{plainnat}
\bibliography{library}

\newpage

\appendix

\section*{Appendix}

\section{Glossary}
\label[appendix]{appendix-glossary}

\begin{list}{}{
    \setlength{\leftmargin}{0pt}
}
    \item \textbf{Canonical form} -- The simplest polynomial linear recurrence that generates a given formula. It is found by running RISC's Mathematica package \citep{kauers2022guessing}, then validating numerically, and finally minimizing the recurrence with a Maple package \citep{mapleminimalrec}.
    \item \textbf{Coboundary equivalence} -- An equivalence between two matrices via a third matrix called the \textit{coboundary matrix}. For two matrices $A(n),B(n) \in \operatorname{PGL}_m\left(\mathbb{Q}(n)\right)$ the equivalence takes the form $A(n)\cdot U(n+1) = U(n)\cdot B(n)$. In this study we combine a novel coboundary solver (UMAPS) with folding to create the matching algorithm (\cref{appendix-algs-formula-matching}), which finds equivalences between formulas.
    \item \textbf{Coboundary transformation} -- A transformation of the form $A(n) \rightarrow U^{-1}(n) \cdot A(n) \cdot U(n+1)$, for $A(n),U(n)\in \operatorname{PGL}_m\left(\mathbb{Q}(n)\right)$.
    \item \textbf{Companion matrix} -- A matrix with a structure suitable for directly representing a linear recurrence, see \cref{def:companion-form}.
    \item \textbf{Conservative Matrix Field (CMF)} -- A recently discovered mathematical structure consisting of a lattice of matrices that enables unification of many formulas discovered over the years. Originally discovered in \citep{elimelech2023algorithm} as a result of a large-scale experimental mathematics effort in formula generation, the structure was found to be generalizable to any D-finite function in \citep{weinbaum2025conservativematrixfieldscontinuous}.
    \item \textbf{CMF dimension} -- The number of parameterized matrices composing the CMF ($3$ for the $\pi$-CMF).
    \item \textbf{CMF rank} -- The dimension of the CMF matrices (number of rows, equal to the number of columns; $2$ for the $\pi$-CMF).
    \item \textbf{Convergence rate} \textbf{($\mathbf{r}$)} -- The exponential convergence rate of a formula, measured empirically in this study, see \cref{def:convergence_rate}. Used in the formula matching algorithm to ascertain the folds needed to find an equivalence between two formulas.
    \item \textbf{Dynamical metrics} -- Numerical values derived from formulas, used to cluster and find relations between the formulas (see convergence rate $r$ and irrationality measure $\delta$) \citep{BlindDelta}.
    \item \textbf{Extraction} -- The stage in the formula harvesting pipeline translating formulas given in $\text{\LaTeX }$ into their components, resulting in executable SymPy code.
    \item \textbf{Folding} -- A fold by $k$ is a transformation mapping a matrix $M(n) \in \operatorname{PGL}_m\left(\mathbb{Q}(n)\right)$ to the matrix $\Pi_{i=1}^{k}M(k(n-1)+i)$. In the context of recurrences, the new matrix represents taking $k$ steps of the original recurrence at a time.
    \item \textbf{Irrationality measure} \textbf{($\mathbf{\delta}$)} -- The irrationality measure of a formula, measured empirically in this study, see \cref{def:irrationlity_measure}. $\delta$ is used in the matching algorithm to find candidate pairs for matching formulas, as $\delta$ is invariant under coboundary transformations. The result of this clustering phase was a 20-fold decrease in runtime, see \cref{appendix-sensitivity-study}.
    \item \textbf{Polynomial Continued Fraction (PCF)} -- An equivalent representation of second-order recurrences with polynomial coefficients. Any second-order recurrence generating a mathematical constant can be represented as a companion matrix (\cref{lem:formula_genrating_iff_CM}), and thus as a continued fraction.
    \item \textbf{PSLQ} -- A numerical integer relation algorithm that finds integer coefficients $( a_i )$ such that $a_1 x_1 + a_2 x_2 + \cdots + a_n x_n = 0$ for given real numbers $( x_i )$, revealing algebraic relationships among them \citep{PSLQ, beithalachmi2025ramanujanlibraryautomated}.
    \item \textbf{Retrieval} -- The stage of the formula harvesting pipeline in which regular expressions are applied to filter for equations that may compute the constant of interest.
    \item \textbf{Scraping} -- The first stage of the formula harvesting pipeline.
    \item \textbf{Trajectory} -- A direction in a CMF, a vector of integers with length equal to the CMF dimension.
    \item \textbf{Trajectory matrix} -- The parameterized matrix resulting from walking in a trajectory in a CMF. Every trajectory matrix corresponds to a polynomial linear recurrence, which has a canonical form. Matches between the canonical forms of trajectory matrices and the canonical forms representing formulas can thus be searched. These matrices act as representatives of the CMF, enabling unification of formulas via the CMF.
    \item \textbf{UMAPS} -- Our novel coboundary-solving algorithm.
    \item \textbf{Unification} -- Embedding formulas in a single CMF to show there is an underlying theory behind them, the main goal of the work.
    \item \textbf{Validation} -- The stage of the harvesting pipeline after extraction in which the limits of formulas are recovered in terms of the constant of interest by running PSLQ on a numerical evaluation of the formula. This stage has two roles: (1) recovering the limits and (2) validating convergence to the constant of interest (or a Möbius transformation thereof).
\end{list}

\section{Special results}
\label[appendix]{appendix-special-results}

The examples shown below were found completely automatically via the algorithms discussed in \cref{section-methodology} and organized in \cref{appendix-algs}. The actions at each step are described as if being applied by a human for clarity, but we emphasize that the connections were made automatically. A template for proofs could have been written once and filled in by a computer as it executed the algorithms. Some interactive examples are available at the online algorithm demonstration \citep{algorithm_demo}.

\subsection{Unification by the \texorpdfstring{$\pi$}{pi} Conservative Matrix Field (CMF)}
\label[appendix]{appendix-results-cmf-examples}

\subsubsection{Unification example: formulas 1, 2 and 5 from \texorpdfstring{\cref{tab:canonicalization-examples}}{Table 1}}

Formulas 1 and 2 have the same canonical form, showing they are equivalent. We will next prove formulas 1,5 equivalent via the matching algorithm (\cref{appendix-algs-formula-matching}). We compute the irrationality measure ($\delta$) for formulas 1 and 5, resulting in $-0.65$ for both canonical forms. Next, we compute convergence rates ($r$), resulting in $0.69$ and $1.38$ for formulas 1 and 5, respectively. Dividing the two we get $\frac{r_1}{r_5} = \frac{1}{2}$. Folding canonical form 1 by 2 and canonical form 5 by 1 (meaning no change) results in
\begin{equation*}
\resizebox{\textwidth}{!}{
$
\text{PCF}_{1,\text{folded}}
=\text{PCF}\left(60 n^{3} + 34 n^{2} - 11 n - 3, 2 n \left(- 288 n^{5} + 624 n^{4} - 230 n^{3} - 225 n^{2} + 158 n - 24\right)\right)
$
}
\end{equation*}
Applying UMAPS (\cref{appendix-algs-coboundary-algorithm}), the following coboundary matrix and polynomials connecting $\text{PCF}_{1,\text{folded}}$ and $\text{PCF}_5$ are discovered:
\begin{equation*}
\resizebox{\textwidth}{!}{
$
U(n) = 
\begin{pmatrix}
    48 n^{3} - 85 n^{2} + 28 n
    & 2304 n^{6} - 9792 n^{5} + 15440 n^{4} - 11100 n^{3} + 3586 n^{2} - 408 n
    \\
    -1
    & - 48 n^{3} + 200 n^{2} - 223 n + 51
\end{pmatrix}
$
}    
\end{equation*}
\begin{align*}
p_A(n) &=
12n - 17 \\
p_B(n) &= 
3n + 2
\end{align*}
This means that the coboundary condition (\cref{def:cbd_equiv_poly}) holds:
$$
p_A(n)\cdot \text{CM}_{1,\text{folded}}
\cdot U(n+1) = p_B(n)\cdot U(n) \cdot \text{CM}_{5}
$$
Multiplying out the terms, we indeed get the expected relation:

\begin{equation*}
\resizebox{\textwidth}{!}{%
$
\begin{aligned}
    (12n-17) & \cdot 
    \begin{pmatrix}
        0
        & 2 n \left(- 288 n^{5} + 624 n^{4} - 230 n^{3} - 225 n^{2} + 158 n - 24\right)
        \\
        1
        & 60 n^{3} + 34 n^{2} - 11 n - 3
    \end{pmatrix}
    \cdot \\ & \cdot
    \begin{pmatrix}
    48 n^{3} + 59 n^{2} + 2 n - 9
    & 2304 n^{6} + 4032 n^{5} + 1040 n^{4} - 1180 n^{3} - 434 n^{2} + 88 n + 30 \\
    -1
    & - 48 n^{3} + 56 n^{2} + 33 n - 20
    \end{pmatrix}
    \\ = & \\
    (3n + 2) & \cdot
    \begin{pmatrix}
    48 n^{3} - 85 n^{2} + 28 n
    & 2304 n^{6} - 9792 n^{5} + 15440 n^{4} - 11100 n^{3} + 3586 n^{2} - 408 n
    \\
    -1
    & - 48 n^{3} + 200 n^{2} - 223 n + 51
    \end{pmatrix}
    \cdot \\ & \cdot
    \begin{pmatrix}
        0
        & - 9216 n^{6} + 12288 n^{5} + 11264 n^{4} - 15520 n^{3} - 764 n^{2} + 3802 n - 714
        \\
        1
        & 240 n^{3} + 164 n^{2} - 54 n - 29
    \end{pmatrix}\
    \end{aligned}
    $}
\end{equation*}

We have therefore found a transformation from one canonical form to the other: 
\newline
\noindent Coboundary( Fold($\text{PCF}_1$, 2) ) = Fold($\text{PCF}_5$, 1).  Formulas 1,2 and 5 are equivalent---having a proof for any one formula out of these three immediately proves the other two.

Are these formulas contained in the CMF, \cref{def:the_pi_cmf_in_main_text}? We find that the recurrence of trajectory $(1,0,0)$ in the CMF also has $\delta = -0.65$. The recurrence generated by this direction is precisely the recurrence $\text{PCF}_1$, meaning a trivial coboundary equivalence ($U(n)=I_{2
\times2}$) exists between formulas 1,2 and the CMF. Note that in the general case, showing unification requires finding a nontrivial coboundary equivalence ($U(n) \neq I_{2\times2}$) between a representative canonical form of the equivalent formulas, and the canonical form of the recurrence generated by the CMF (per \cref{appendix-algs-graph-growing}). See \cref{appendix-results-cmf-examples-nontrivial-coboundary} for an example.

\subsubsection{Unification example: formulas 3 and 4 from \texorpdfstring{\cref{tab:canonicalization-examples}}{Table 1}}

The canonical forms of formulas 3 and 4 have $\delta = -1$ and convergence rates $0$ (they converge slowly). Given the similarity in $\delta$, we conjecture that the formulas are coboundary. Applying UMAPS yields the coboundary matrix:
$$
U(n) =
\begin{pmatrix}
4 n^{2} - 4 n + 1
& 8 n^{3} + 4 n^{2} - 10 n + 3 \\
2 n + 1
& 4 n^{2} + 8 n + 7
\end{pmatrix}
$$
and trivial external polynomials - $p_A(n)=1,p_B(n)=1$.
So formulas 3 and 4 are equivalent. The trajectory $(1,1,1)$ of the CMF generates a recurrence with $\delta = -1$ and upon inspection is found to be precisely the canonical form of formula 3, $\text{PCF}_3$, unifying formulas 3,4.

The next example requires a nontrivial coboundary equivalence between a recurrence of the CMF and the formula of interest.

\subsubsection{Unification example: cluster \texorpdfstring{$(-1,3,3)$ ($\delta = -0.91$)}{(-1,3,3) delta = -0.91}}
\label[appendix]{appendix-results-cmf-examples-nontrivial-coboundary}

The following example pertains to the $\delta=-0.91$ cluster in \cref{tab:unified-formulas}. The canonical form generated by the CMF:
\begin{equation*}
    \resizebox{0.9\textwidth}{!}{
    $
    \begin{split}
    \text{PCF}_{\text{CMF}} =& \text{PCF}(- 7568 n^{5} - 11664 n^{4} + 6992 n^{3} + 6036 n^{2} - 279 n - 162, \\
    &- 24 n (2 n + 1) (4 n - 3) (4 n - 1) (6 n - 7) (6 n - 5) (22 n^{2} - 39 n - 1) (22 n^{2} + 49 n + 9))
    \end{split}
    $
    }
\end{equation*}
and the canonical form corresponding to the series
\begin{equation*}
    2\pi = \sum_{k=1}^{\infty} \frac{16^{k} (22 k^{2} - 17 k + 3) {\binom{4 k}{2 k}}}{k (4 k - 3) (4 k - 1) {\binom{3 k}{k}} {\binom{6 k}{3 k}}}
\end{equation*}
is
\begin{equation*}
    \scalebox{0.9}{
    $
    \begin{split}
    \text{PCF}_{42} =& \text{PCF}(3784 n^{4} + 156 n^{3} - 1942 n^{2} + 261 n + 45, \\
    &- 24 n (2 n - 3) (4 n - 3) (4 n - 1) (6 n - 5) (6 n - 1) (11 n - 14) (11 n + 8))
    \end{split}
    $
    }
\end{equation*}
Computing convergence rates, we find both  have $r=0.52$, so they are not folded. Applying UMAPS results in coboundary matrix and polynomials:

\begin{equation*}
\resizebox{\textwidth}{!}{%
$
\begin{aligned}
    U(n) &=
    \begin{pmatrix}
    1848 n^{5} - 7676 n^{4} + 10730 n^{3} - 5605 n^{2} + 682 n + 21
    & 6690816 n^{9} - 50485248 n^{8} + 157736064 n^{7} \\
    1
    & 2860 n^{4} - 10680 n^{3} + 13481 n^{2} - 6348 n + 756
    \end{pmatrix}
\end{aligned}
$
}
\end{equation*}
\begin{align*}
    p_A(n) &=
    - 22 n^{2} + 61 n - 42
    \\
    p_B(n) &=
    44 n^{3} + 120 n^{2} + 67 n + 9
\end{align*}
showing that the series is contained within the CMF.

\subsubsection{Unification example: cluster \texorpdfstring{$(0,0,1)$ ($\delta = -1.00$)}{(0,0,1) delta = -1.00}}
\label[appendix]{appendix-results-cmf-examples-shown-in-figure-cmf-unification}

Here we show the four formulas listed explicitly in \cref{fig:cmf-unification} for trajectory $(0,0,1)$ are all equivalent, these correspond to indices 71, 72, 75 and 76 of \cref{tab:unified-formulas}. 
All formulas have $\delta=-1.00$ of course, so they proceed to the convergence rate matching stage.

First, consider the two polynomial continued fractions (formulas 72 and 71):
\begin{align*}
    \text{PCF}_{72} &= \text{PCF}(1, n (n + 1)) \\
    \text{PCF}_{71} &= \text{PCF}(2, n^2)
\end{align*}

These have convergence rates $0$, so all combinations of folding up to 2 are passed to UMAPS. Applying UMAPS, the coboundary algorithm (\cref{appendix-algs-coboundary-algorithm}), the two turn out to be coboundary to each other with no folds necessary:
$$U(n) = 
\begin{pmatrix}
    n & -n^2 \\
    -1 & n-1
\end{pmatrix}
$$
with trivial ``external" polynomials $p_A(n),p_B(n)=1$.

Next, consider formulas 75 and 76:
Applying the same steps as above shows they have convergence rates of $0$. Passing all three combinations of folding by 2 to UMAPS, we obtain a coboundary matrix relating the two formulas with no folds necessary:

$$
U(n) = 
\begin{pmatrix}
    4n^2 - 4n + 1 & 16n^2 - 16n^3 + 4n^2 \\
    -1 & -4n^2-4
\end{pmatrix}
$$
these too with trivial ``external" polynomials.

At this point there are two clusters. Can they be united?
Consider the pair 72, 75. Passing all three combinations for folds to the coboundary algorithm, a coboundary matrix comes up:
$$
U(n) = 
\begin{pmatrix}
4 n^{2} - 4 n + 2
& 16 n^{4} - 16 n^{3} + 4 n^{2}
\\
-1
& 1 - 4 n^{2}
\end{pmatrix}
$$
with external polynomials
$$p_A(n) = 2n-1$$
$$p_B(n) = 2n+1$$
In conclusion, we have found that formulas 71, 72, 75, 76 are equivalent to each other. Only one need be proven to prove all of the others.

\subsection{Unification by the \texorpdfstring{$\zeta(3)$}{zeta(3)} Conservative Matrix Field (CMF)}
\label[appendix]{appendix-zeta3-unification}

The following matrices define a 2D CMF that computes the constant $\zeta(3)$.

\begin{equation}
\label{def:the_zeta3_cmf}
\begin{aligned}
    M_\mathbf{x} &=
    \begin{pmatrix}
    0 & - x^{3} \\
    \left(x + 1\right)^{3} & x^{3} + 2 y \left(2 x + 1\right) \left(y - 1\right) + \left(x + 1\right)^{3}
    \end{pmatrix} \\
    M_\mathbf{y} &=
    \begin{pmatrix}
    - x^{3} + 2 x^{2} y - 2 x y^{2} + y^{3} & - x^{3} \\
    x^{3} & x^{3} + 2 x^{2} y + 2 x y^{2} + y^{3}
    \end{pmatrix}
\end{aligned}
\end{equation}

Converting the two formulas for $\zeta(3)$, \cref{eq:zeta3}, to canonical forms, respectively yields

\begin{equation*}
\begin{minipage}[t]{0.4\textwidth}
\resizebox{\textwidth}{!}{%
\raggedright
$\displaystyle
\frac{2}{5 - 4\zeta(3)} = 12 + \cfrac{-48}{40 + \cfrac{-648}{98 + \cfrac{-3840}{\ddots + \cfrac{-n(n + 1)^{4}(n + 2)}{2n^{3} + 9n^{2} + 17n + 12 + \ddots}}}}
$
}
\end{minipage}
\hspace{1cm}
\begin{minipage}[t]{0.4\textwidth}
\resizebox{\textwidth}{!}{%
\raggedleft
$\displaystyle
\frac{\zeta(3)}{\zeta(3) - 1} = 9 + \cfrac{-64}{35 + \cfrac{-729}{91 + \cfrac{-4096}{\ddots + \cfrac{-(n + 1)^{6}}{2n^{3} + 9n^{2} + 15n + 9 + \ddots}}}}
$
}
\end{minipage}
\end{equation*}

Applying our methods, we find a coboundary matrix of degree 6 with linear external polynomials,
\begin{equation*}
U(n) =
\begin{pmatrix}
n^3 + n^2 + n + 1 &
n^6 + 5n^5 + 10n^4 + 10n^3 + 5n^2 + n \\
-1 &
-n^3 - 4n^2 - 5n
\end{pmatrix}
\end{equation*}
$$
p_A(n) = n
$$
$$
p_B(n) = n+1
$$
which are together a certificate of equivalence for the canonical forms, and hence for the original formulas too.

Both of these formulas are found in the $(1,0)$ direction of the $\zeta(3)$ CMF (\cref{def:the_zeta3_cmf}), which corresponds to the continued fraction:

\begin{equation*}
    \resizebox{0.4\textwidth}{!}{%
    $
    \frac{1}{\zeta(3)} = 1 + \cfrac{-1}{9 + \cfrac{-64}{35 + \cfrac{-729}{\ddots + \cfrac{-n^{6}}{2n^{3} + 3n^{2} + 3n + 1 + \ddots}}}}
    $
    }
\end{equation*}

\subsection{Unification by the  \texorpdfstring{$e$}{e} Conservative Matrix Field (CMF)}
\label[appendix]{appendix-e-unification}

The following matrices define a 2D CMF that computes the constant $e$.

\begin{equation}
\label{def:the_e_cmf}
\renewcommand{\arraystretch}{0.8} 
\setlength{\arraycolsep}{2pt}           
\begin{minipage}[t]{0.4\textwidth}
\raggedright
$\displaystyle
M_\mathbf{x} =
\begin{pmatrix}
    1 & -y - 1 \\
    -1 & x + y + 2
\end{pmatrix}
$
\end{minipage}
\hspace{0.01\textwidth}
\begin{minipage}[t]{0.4\textwidth}
\centering
$\displaystyle
M_\mathbf{y} =
\begin{pmatrix}
    0 & -y - 1 \\
    -1 & x + y + 1
\end{pmatrix}
$
\end{minipage}
\end{equation}

We present below the unification of all 15 $e$ formulas from \citep{Raayoni2021}.

\FloatBarrier

\begin{table}[h!]
    \caption{Unification of all 15 $e$ formulas from \citep{Raayoni2021} by CMF \cref{def:the_e_cmf}, via UMAPS. Because these formulas converge super-exponentially, the convergence rates are unbounded and depend on the depth to which they are computed. The folds needed to match the formulas in the (0,1) trajectory were luckily still uncovered.}
    \centering
    \begin{tabular}{ccC{3cm}cc}
    \toprule
    Cluster & & Formula & Value & Convergence rate \\
    
    \midrule \\
    \makecell{(1, 1) \\ $\delta = 1.00$} & & & & \\
    & 1 & PCF($4 n + 2$,$1$) & $\frac{1 + e}{-1 + e}$ & 17.36 \\
    
    \midrule \\
    \makecell{(1, 0) \\ $\delta = 0.00$} & & & & \\
    & 2 & PCF($n + 2$,$- n$) & $\frac{e}{-1 + e}$ & 7.30 \\
    & 3 & PCF($n + 3$,$- n$) & $e$ & 7.30 \\
    & 4 & PCF($n^{2} + 3 n + 3$,$- n^{3} - 2 n^{2}$) & $\frac{4 e}{-1 + 2 e}$ & 7.30 \\
    & 5 & PCF($n^{2} + 4 n + 3$,$- n^{3} - 3 n^{2}$) & $\frac{3 e}{2 \left(-1 + e\right)}$ & 7.30 \\
    & 6 & PCF($n + 4$,$- n$) & $\frac{e}{-2 + e}$ & 7.31 \\
    & 7 & PCF($n + 5$,$- n$) & $\frac{e}{6 - 2 e}$ & 7.31 \\
    & 8 & PCF($n + 6$,$- n$) & $\frac{e}{-24 + 9 e}$ & 7.31 \\
    & 9 & PCF($n^{2} + 6 n + 7$,$- n^{3} - 3 n^{2}$) & $\frac{6 e}{-3 + 2 e}$ & 7.31 \\
    
    \midrule \\
    \makecell{(0, 1) \\ $\delta = 0.00$} & & & & \\
    & 10 & PCF($n$,$n$) & $\frac{1}{-1 + e}$ & 7.30 \\
    & 11 & PCF($n + 1$,$n$) & $\frac{1}{-2 + e}$ & 7.30 \\
    & 12 & PCF($n + 2$,$n$) & $\frac{1}{-5 + 2 e}$ & 7.31 \\
    & 13 & PCF($n + 3$,$n$) & $\frac{1}{-16 + 6 e}$ & 7.31 \\
    
    & 14 & PCF($4 n^{2} + 14 n + 11$,$- 4 n^{2} - 6 n$) & $\frac{3}{3 - e}$ & 15.98 \\
    & 15 & PCF($4 n^{2} + 10 n + 5$,$- 4 n^{2} - 2 n + 2$) & $1 + \frac{e}{-2 + e}$ & 15.98 \\

    \bottomrule
    \end{tabular}
    \label{tab:e_unification}
\end{table}

\FloatBarrier

For example, consider these two polynomial continued fractions:

\begin{equation*}
\begin{minipage}[t]{0.4\textwidth}
\resizebox{\textwidth}{!}{%
\raggedright
$\displaystyle
\frac{6e}{2e-3} = 7 + \cfrac{-4}{14 + \cfrac{-20}{23 + \cfrac{-54}{\ddots + \cfrac{n^{2}(-n - 3)}{n(n + 6) + 7 + \ddots}}}}
$
}
\end{minipage}
\hspace{1cm}
\begin{minipage}[t]{0.4\textwidth}
\resizebox{\textwidth}{!}{%
\raggedleft
$\displaystyle
\frac{4e}{2e-1} = 3 + \cfrac{-3}{7 + \cfrac{-16}{13 + \cfrac{-45}{\ddots + \cfrac{n^{2}(-n - 2)}{n(n + 3) + 3 + \ddots}}}}
$
}
\end{minipage}
\end{equation*}

Applying UMAPS, we find coboundary matrix
\[
U(n) =
\begin{pmatrix}
n^3 + 4n^2 + 6n + 6 & n^4 + 4n^3 + 4n^2 \\
-n - 1 & -n^2 - n + 2
\end{pmatrix}
\]
and external polynomials
$$
p_A(n) = n+2
$$
$$
p_B(n) = n+3
$$
proving that the two formulas are equivalent.

In similar fashion, we arrive at the other equivalences summarized in \cref{tab:e_unification}.

\FloatBarrier

\subsection{Equivalence example: notable formulas for \texorpdfstring{$\pi$}{pi}}
\label[appendix]{appendix-results-equivalence-example-pi}

\cref{eq:ramanujan-1914} \citep{Ramanujan1914} and \cref{eq:sun-series} \citep{sun2020newseriespowerspi} are converted to recurrences, both of order 2, after which they are converted to canonical form PCFs, respectively:
\begin{align*}
\frac{239018472}{-3528 + 1123 \pi}
&=
\text{PCF}_\text{Ramanujan}
=
\text{PCF}(a_1(n), b_1(n))
\\
\frac{1047212167162854000}{-341446000 + 108685699 \pi}
&=
\text{PCF}_\text{Sun}
=
\text{PCF}(a_2(n), b_2(n))
\end{align*}
\resizebox{\textwidth}{!}{
$
\begin{aligned}
a_1(n)
&=
534215282560n^{4} + 1630601631968n^{3} + 1686512782328n^{2} + 618081838666n + 27955409115
\\
b_1(n)
&=
n^{3}(366856790423961600n^{5} + 588680355780034560n^{4} - 56045383774765056n^{3} \\ & \hspace{10pt} - 487988770034755584n^{2} - 247923828204062976n - 34298642100691584)
\\
a_2(n)
&=
35468306308982528n^{5} + 180047738533689024n^{4} + 332745102731042192n^{3} \\ & \hspace{10pt} + 272631301503072468n^{2} + 89876772716256332n + 5411146610376015
\\
b_2(n)
&=
n^{2}(1617129676787301327212642304n^{8} + 4289585526894573435060486144n^{7} \\ & \hspace{10pt}- 283366210981584591028224000n^{6} -5781213621368637378454757376n^{5} \\ &\hspace{10pt}- 1039278977594267522852017152n^{4} + 1952285872621730578835212800n^{3} \\ & \hspace{10pt}+ 65692626394504296555019008n^{2} -100482263421913916885155968n \\ & \hspace{10pt}- 1599880200791331634560)
\end{aligned}
$
}
The canonical forms share $\delta=-0.29$ and $r = 13.56$, so the recurrences are not folded and UMAPS is applied, resulting in a coboundary matrix of degree 10, coupled with external polynomials of degree 4, rendering Eqs. \ref{eq:ramanujan-1914} and \ref{eq:sun-series} equivalent:
\begin{align*}
    p_A(n) &=
    11398398784 n^{4} - 19077544640 n^{3} + 9321191372 n^{2} - 1315967464 n - 20955
    \\
    p_B(n) &=
    171680 n^{3} + 395264 n^{2} + 290210 n + 67749
\end{align*}
$$
U(n) =
\begin{pmatrix}
    U_{11}(n) & U_{12}(n) \\
    U_{21}(n) & U_{22}(n)
\end{pmatrix}
$$
\resizebox{0.9\textwidth}{!}{
$
\begin{aligned}
    U_{11}(n) &= 
    28876576000 n^{5} - 61950059840 n^{4} + 1926362087953808 n^{3} \\
    & \hspace{10pt} - 1678583497631500 n^{2} - 139251745359750 n
    \\
    U_{12}(n) &=
    1024204559309528510398464000 n^{10} - 2119123722024588790327541760 n^{9}
    \\
    &\hspace{10pt}+ 1056569453502166636426985472 n^{8} + 244974995622211634412208128 n^{7}
    \\
    &\hspace{10pt}- 205564834935781598084742144 n^{6} - 7035268079364204755916288 n^{5}
    \\
    &\hspace{10pt}+ 8470527814505833597769472 n^{4} + 134868258407972960640 n^{3}
    \\
    U_{21}(n) &= 42050 n - 29337
    \\
    U_{22}(n) &=
    1491444197503390771200 n^{6} - 926743682638889031168 n^{5} \\
    &\hspace{10pt}- 1329170087838044354112 n^{4} + 980655193799148492576 n^{3} \\
    &\hspace{10pt}- 117379649957600136708 n^{2} - 9013576532170267008 n - 143483055820335
\end{aligned}
$
}

\subsection{Equivalence example: formulas for Catalan's constant G}
\label[appendix]{appendix-results-equivalence-example-catalan}

The two formulas for Catalan's constant in \cref{eq:catalan} are equivalent via coboundary matrix 
$$
U(n) = 
\begin{pmatrix}
4n^2 + 2n &
16n^4 \\
-1 &
-4n^2 + 2n -1
\end{pmatrix}
$$
and trivial external polynomials---$p_A(n)=1$ and $p_B(n)=1$. Note that the formulas in \cref{eq:catalan} are not polynomial continued fractions in their current form due to a periodicity of 2 in the $a_n$, $b_n$ functions. To convert them into polynomial form, they are first inflated to make them integer continued fractions,
then folded by 2 to make them polynomial, resulting in the canonical forms:

\begin{equation*}
\begin{minipage}[t]{0.4\textwidth}
\resizebox{\textwidth}{!}{%
\raggedright
$\displaystyle
\frac{1}{2 - 2G} = 7 + \cfrac{-16}{23 + \cfrac{-256}{55 + \cfrac{-1296}{\ddots + \cfrac{-16n^{4}}{8n^{2} + 8n + 7 + \ddots}}}}
$
}
\end{minipage}
\hspace{1cm}
\begin{minipage}[t]{0.4\textwidth}
\resizebox{\textwidth}{!}{%
\raggedleft
$\displaystyle
\frac{2}{2G-1} = 5 + \cfrac{-32}{25 + \cfrac{-384}{61 + \cfrac{-1728}{\ddots + \cfrac{16n^{3}(-n - 1)}{8n^{2} + 12n + 5 + \ddots}}}}
$
}
\end{minipage}
\end{equation*}

\subsection{Example polynomial recurrences for formulas, some of order greater than 2}
\label[appendix]{appendix-risc-guess-results}

One of Ramanujan's 1914 formulas (shown in Fig. \ref{fig:engineering-formula-extraction}) is represented by the following order-2 polynomial recurrence using RISC's tool for finding minimal recurrences \citep{kauers2022guessing}:

\begin{equation*}
\resizebox{\textwidth}{!}{
$
\begin{aligned}
0 = &\left(-\frac{18426177}{3162112} 
- n \cdot \frac{1603904319}{63242240} 
- n^2 \cdot \frac{185504787}{3952640} 
- n^3 \cdot \frac{605532897}{12648448} - n^4 \cdot \frac{22985937}{790528} 
- n^5 \cdot \frac{83133297}{7905280} 
- n^6 \cdot \frac{2072547}{988160} 
- n^7 \cdot \frac{729}{4096} \right) f(n) \\
+ &\left(-\frac{569520571}{15810560} 
- n \cdot \frac{1927156365}{12648448} 
- n^2 \cdot \frac{1076882413}{3952640} 
- n^3 \cdot \frac{3379580191}{12648448} - n^4 \cdot \frac{122831663}{790528} 
- n^5 \cdot \frac{424008847}{7905280} 
- n^6 \cdot \frac{10066461}{988160} 
- n^7 \cdot \frac{3367}{4096} \right) f(1 + n) \\
+ &\left(\frac{40384}{965} 
+ n \cdot \frac{171504}{965} 
+ n^2 \cdot \frac{61640}{193} 
+ n^3 \cdot \frac{60808}{193} + n^4 \cdot \frac{35600}{193} 
+ n^5 \cdot \frac{61907}{965} 
+ n^6 \cdot \frac{23709}{1930} 
+ n^7 \right) f(2 + n).
\end{aligned}
$
}
\end{equation*}

Some formulas are generated by recurrences of higher order. The methods presented in this work can be generalized to higher degree recurrences (see \cref{lemma-generalized-necessary-condition-for-coboundary-matrix} of \cref{appendix-algs-coboundary-algorithm}). For example, these two series for Catalan's constant \citep{catalan_representations}

\begin{equation*}
\begin{minipage}[t]{0.4\textwidth}
\resizebox{\textwidth}{!}{%
\raggedright
$
G = \sum_{n=0}^{\infty} \frac{1}{2^{n+1}} \sum_{k=0}^{n} \binom{n}{k} \frac{(-1)^k}{(2k+1)^2},
$
}
\end{minipage}
\hspace{1cm}
\begin{minipage}[t]{0.4\textwidth}
\resizebox{\textwidth}{!}{%
\raggedright
$
2G = \sum_{n=0}^{\infty} \frac{2^n}{(2n+1) \binom{2n}{n}} \sum_{k=0}^{n} \frac{1}{2k+1},
$
}
\end{minipage}
\end{equation*}

are given by the same recurrence of order 3:

\begin{center}
\scalebox{0.8}{
$
\begin{aligned}
    0 &= \left(-\frac{3}{2} - \frac{5n}{4} - \frac{n^2}{4}\right) f(n)
    + \left(\frac{21}{2} + \frac{29n}{4} + \frac{5n^2}{4}\right) f(1 + n)
    \\ & + \left(-\frac{85}{4} - 13n - 2n^2\right) f(2 + n)
    + \left(\frac{49}{4} + 7n + n^2\right) f(3 + n)
    \end{aligned}
$
}
\end{center}

meaning their recurrence matrices, which are in general companion matrices (\cref{def:companion-form}), are trivially coboundary, with the identity coboundary matrix. Other cases in which the recurrence is not precisely the same require a generalization of the coboundary algorithm \cref{appendix-algs-coboundary-algorithm} to solve for the coboundary matrix. Additional examples of high-order recurrences, for $\pi$:

\[
\frac{2}{\pi} = \sum_{k = 0}^\infty \sum_{i = 0}^k \binom{2k - 2i}{k - i}^2 \binom{2i}{i}^2 \cdot k \left(\frac{1}{32} \right)^k
\]
is given by the order-3 recurrence

\begin{center}
\scalebox{0.8}{
$
\begin{aligned}
0 &= \left(-4 - 8n - 6n^2 - 2n^3 - \frac{n^4}{4} \right) f(n) + \left(\frac{81}{4} + \frac{173n}{4} + \frac{65n^2}{2} + \frac{21n^3}{2} + \frac{5n^4}{4} \right) f(n+1)
\\ &+ \left(-\frac{137}{4} - \frac{297n}{4} - \frac{111n^2}{2} - \frac{35n^3}{2} - 2n^4 \right) f(n+2) + \left(18 + 39n + 29n^2 + 9n^3 + n^4 \right) f(n+3)
\end{aligned}
$
}
\end{center} 
\[
\frac{2}{\pi} =\sum_{n = 0}^{\infty} (-1)^n \frac{(3 n + 1)}{32^n} 
\sum_{k = 0}^{n} \binom{2 n - 2 k}{n - k} \binom{2 k}{k} \binom{n}{k}^2
\]
is given by the order-3 recurrence

\resizebox{\textwidth}{!}{
$
\begin{aligned}
0 &= \left(-\frac{35}{9} - \frac{26n}{3} - \frac{23n^2}{3} - \frac{121n^3}{36} - \frac{35n^4}{48} - \frac{n^5}{16} \right) f(n) 
+ \left(-\frac{365}{9} - \frac{181n}{2} - \frac{1879n^2}{24} - \frac{1589n^3}{48} - \frac{55n^4}{8} - \frac{9n^5}{16} \right) f(n+1) 
\\
& + \left(-\frac{356}{9} - \frac{503n}{6} - \frac{1633n^2}{24} - \frac{1279n^3}{48} - \frac{81n^4}{16} - \frac{3n^5}{8} \right) f(n+2) 
+ \left(84 + 183n + 154n^2 + \frac{568n^3}{9} + \frac{38n^4}{3} + n^5 \right) f(n+3)
\end{aligned}
$
}

\section{Algorithms}
\label[appendix]{appendix-algs}
This section contains an in-depth description of the algorithms discussed in \cref{section-methodology}. The algorithms are ordered top-down, from the highest level algorithm to the lowest. A hyperparameter sensitivity study supporting the hyperparameter choices below is included in \cref{appendix-sensitivity-study}. All algorithms used in the pipeline were run on a 13th Gen i5-13500H Intel Core and are available at \href{https://github.com/RamanujanMachine/euler2ai}{https://github.com/RamanujanMachine/euler2ai}. Runs required for the sensitivity study were conducted on the Technion High Performance Computing Zeus Cluster.
\FloatBarrier
\begin{table}[h!]
    \centering
    \caption{\textbf{Algorithms enabling unification by Conservaitve Matrix Field (CMF)}. The matching algorithm and UMAPS are depicted also in \cref{fig:coboundary-steps}.}
    \resizebox{\textwidth}{!}{
    \begin{tabular}{C{0.2cm}C{3cm}C{3cm}C{5cm}C{6cm}}
        \toprule
        & Name & Dependencies & Input & Output \\
         \midrule
         \ref{appendix-algs-graph-growing} & Coboundary graph growing algorithm & \ref{appendix-algs-formula-matching}, \ref{appendix-algs-canonicalization}, \ref{append-algs-cmf-trajectories} & canonical-form formulas and CMF-derived representative recurrences & a \textit{coboundary graph}---a forest of equivalent formulas with equivalence-proving transformations stored in the edges \\
         \midrule
         \ref{appendix-algs-formula-matching} & Matching algorithm & \ref{appendix-algs-coboundary-algorithm} & two polynomial recurrences $A(n),B(n)$ & a triple of three transformations: a fold transform for each recurrence ($F_A, F_B$) and a coboundary transform ($U(n),p_A(n),p_B(n)$) s.t. \resizebox{0.4\textwidth}{!}{$\frac{p_A(n)}{p_B(n)} \cdot U(n)^{-1} \cdot F_A \large( A(n) \large) \cdot U(n+1) = F_B \large(B(n)\large)$}\\
         \midrule
         \ref{appendix-algs-coboundary-algorithm} & UMAPS: The coboundary solving algorithm & & two polynomial recurrences $A(n),B(n)$ & a coboundary transform ($U(n),p_A(n),p_B(n)$) s.t. $\frac{p_A(n)}{p_B(n)} \cdot U(n)^{-1} \cdot A(n) \cdot U(n+1) = B(n)$ \\
         \midrule
         \ref{appendix-algs-canonicalization} & Conversion to canonical form & RISC's tool \begin{small}
             \citep{kauers2022guessing}
         \end{small} & Diophantine formula & minimal polynomial recurrence (PCF for second-order recurrences) \\
         \midrule
         \ref{append-algs-cmf-trajectories} & Recurrence generation by CMF & & CMF matrices, start point, trajectory & polynomial trajectory matrix (polynomial recurrence) \\
         \bottomrule
    \end{tabular}
    }
    \label{tab:algorithms}
\end{table}

\FloatBarrier

\subsection{The coboundary graph growing algorithm}
\label[appendix]{appendix-algs-graph-growing}

\textbf{Input:} (1) Initialized graph with no edges, where each node is a canonical-form recurrence for a formula. Each node has precomputed attributes: irrationality measure ($\delta$) and convergence rate ($r$) (\cref{section-math-backround}). (2) Dataframe of canonical forms generated by the CMF (see \cref{append-algs-cmf-trajectories}), with precomputed attributes as in (1).

\textbf{Output:} The graph as a forest, where each edge contains the rigorous transformation between the two nodes it connects. This forest is termed a \textit{coboundary graph}. Every tree groups formulas that are rigorously-equivalent together---the trees found by the algorithm are actually subgraphs of cliques; not all clique edges are computed during the matching phase to make the algorithm more efficient, but they all exist. If a tree contains a CMF-generated canonical form, all of the formulas within are unified. 

\textbf{Steps:}

1. Cluster nodes according to $\delta$: Initialize an empty list for each value between $-1.00$ and $0.05$ (or higher upper bound, depending on the PCF with highest $\delta$) at intervals of $0.05$ ($\delta$ ``granularity'') including the edge values. For every cluster value $\delta_c$, insert every node with $\delta$ obeying $|\delta - \delta_c| < 0.03$ ($\delta$ ``similarity threshold'') into the cluster's corresponding list. Note that a node can appear in two clusters initially. The maximum distance between two nodes in the same cluster is $2\cdot 0.03 = 0.06$. This is intentional to prevent missing matches. Additionally, initialize an empty list ``nonhubs.''

2. For every cluster of nodes indexed by $\delta_c$:
\begin{itemize}
    \item For each node, attempt to match it to all nodes appearing at higher indices that are not in ``nonhubs,'' using \cref{appendix-algs-formula-matching}. Store successful matches as new graph edges containing the transformations. Every formula matched to the current node is placed in the global ``nonhubs'' list.
    \item At the end of this process, nodes not in the ``nonhubs'' list are deemed final non-CMF ``hubs.'' They are currently the roots of a forest since a node is added to ``nonhubs'' as soon as it is matched with the current node during the loop, and it is subsequently skipped.
\end{itemize}

3. Now, for each ``hub,'' attempt to match it to all CMF-generated canonical forms with $\delta_{CMF}$ that obey $|\delta - \delta_{CMF}| < 0.03$. When an edge is found, break for the current hub---the hub and all nodes connected to it have been unified in the CMF.

4. Output the resulting graph.

\subsection{The matching algorithm}
\label[appendix]{appendix-algs-formula-matching}
Equivalence is a binary relation between two formulas. To find one, formulas represented as canonical form recurrences (\cref{appendix-canonical-form}) are clustered based on dynamical metrics (\cref{section-methodology-metrics}). Promising pairs, those with similar irrationality measure $\delta$ (\cref{def:irrationlity_measure}), are folded (\cref{appendix-maths-fold}) according to the ratio of their convergence rates $r$ (\cref{def:convergence_rate}), then sent to UMAPS---the coboundary solving algorithm (\cref{appendix-algs-coboundary-algorithm}). The result is a triple: the fold transform needed to be applied to each of the recurrences, and the coboundary matrix and polynomials as outputted by UMAPS. This algorithm is also summarized in \cref{fig:coboundary-steps}.

\textbf{Input:} Two linear polynomial-coefficient recurrences with matrices $A(n)$ and $B(n)$.

\textbf{Output:} A list of three transformations connecting the recurrences (if found)
\begin{itemize}
    \item fold transformation for each of the recurrences ($F_A, F_B$)
    \item coboundary transformation linking the two recurrences after they are folded ($U(n), p_A(n), p_B(n)$): $$\frac{p_A(n)}{p_B(n)} \cdot U(n)^{-1} \cdot F_A \large( A(n) \large) \cdot U(n+1) = F_B \large(B(n)\large)$$
\end{itemize}

\textbf{Steps:}

1. If unknown, compute the convergence rates of each of the recurrences using \cref{def:convergence_rate} at $n=2000$. If the convergence rate is less than $5\cdot10^{-2}$, set it to 0.

2. Compute the ratio of the convergence rates of the recurrences, when defined, as $R=\frac{|r_A|}{|r_B|}$. Assume this number can be approximated as a rational with low-enough denominator to good degree. Set $R=0$ if either of the convergence rates is $0$ (this is normally the case when $\delta = -1$, such PCFs converge polynomially).

3. Fold recurrence $A$ by $r_B$ and recurrence B by $r_A$ if $R \neq 0$. If $R=0$, all combinations of folding the recurrences by up to 2 are passed to the next phase for a total of 3 options: fold neither or (one option) or fold one of them by 2 (two options). This is because the convergence rate does not contain fold information for PCFs with slow convergence.

4. Apply the coboundary solving algorithm \cref{appendix-algs-coboundary-algorithm} between the recurrences. If a coboundary transform is successfully found, output fold transforms from 4 and the coboundary transform.

\subsection{UMAPS: The coboundary solving algorithm}
\label[appendix]{appendix-algs-coboundary-algorithm}

We aim to find a coboundary relation between two recurrences, yet as mentioned in \cref{section-methodology-UMAPS}, this constitutes a nonlinear problem. To see why, let us write the equations for the coboundary matrix and polynomials in full for order $m=2$. We have, given two polynomial matrices $A(n)$ and $B(n)$
$$p_A(n)\cdot A(n)\cdot U(n+1) =p_B(n)\cdot U(n)\cdot B(n) \iff$$
$$
\sum_{i=0}^{d_{p_A}} p_{A,i}n^i \cdot
\begin{pmatrix}
    \sum_{i=0}^{d_{A_{11}}}A_{11,i}n^i & \sum_{i=0}^{d_{A_{12}}}A_{12,i}n^i \\
    \sum_{i=0}^{d_{A_{21}}}A_{21,i}n^i & \sum_{i=0}^{d_{A_{22}}}A_{22,i}n^i
\end{pmatrix}
\cdot
\begin{pmatrix}
    \sum_{i=0}^{d_{U_{11}}}U_{11,i}(n+1)^i & \sum_{i=0}^{d_{U_{12}}}U_{12,i}(n+1)^i \\
    \sum_{i=0}^{d_{U_{21}}}U_{21,i}(n+1)^i & \sum_{i=0}^{d_{U_{22}}}U_{22,i}(n+1)^i
\end{pmatrix}
=
$$
$$
\sum_{i=0}^{d_{p_B}} p_{B,i}n^i
\cdot
\begin{pmatrix}
    \sum_{i=0}^{d_{U_{11}}}U_{11,i}n^i & \sum_{i=0}^{d_{U_{12}}}U_{12,i}n^i \\
    \sum_{i=0}^{d_{U_{21}}}U_{21,i}n^i & \sum_{i=0}^{d_{U_{22}}}U_{22,i}n^i
\end{pmatrix}
\cdot
\begin{pmatrix}
    \sum_{i=0}^{d_{B_{11}}}B_{11,i}n^i & \sum_{i=0}^{d_{B_{12}}}B_{12,i}n^i \\
    \sum_{i=0}^{d_{B_{21}}}B_{21,i}n^i & \sum_{i=0}^{d_{B_{22}}}B_{22,i}n^i
\end{pmatrix}
$$
The unknowns of this equation are the coefficients of $p_A(n),p_B(n)$ and the coefficients of the four polynomials of $U(n)$: $\{p_{A,k} | k = 0, \dotsc, d_{p_A} \}$, $\{p_{B,k} | k = 0, \dotsc, d_{p_B}\}$, $\{U_{ij,k} | k = 0, \dotsc, d_{U_{ij}} ; i,j \in \{1,2\}\}$. From the equation written above, it is clear that the coefficients are coupled (e.g. the product $p_{A,0} \cdot U_{21,0}$ is in the expansion), making the equations resulting from equating powers of $n$ nonlinear.

An empirical method that takes care of nonlinearity and the ambiguity in unknown polynomial degrees would be much preferred. Luckily, such a method is possible: we use \cref{lemma-necessary-condition-for-coboundary-matrix} (proven below), a necessary condition a coboundary matrix between two matrices must obey, to solve for $U(1)$ and then reconstruct the coboundary matrix in full. \cref{lemma-generalized-necessary-condition-for-coboundary-matrix} generalizes this condition to general matrix order $m$, enabling a coboundary-solving algorithm for recurrences of any depth.

\textbf{Input:} Two order-$m$ recurrences that converge to the same irrational constants ($m-1$ constants at most) up to Möbius transformations (\cref{def:Möbius} and a requested depth up to which a coboundary matrix will be fit to ``measurements'' of the coboundary matrix. Denote the recurrence matrices by $A(n)$ and $B(n)$, their limits by $L_A$ and $L_B$ (integer Möbius transformations of the irrational constant $\varphi$ for $m=2$ and projective vectors for higher order recurrences---see the setup of \cref{lemma-generalized-necessary-condition-for-coboundary-matrix}) and the requested depth as $N$.

\textbf{Output:} A polynomial matrix $U(n)$ and two additional polynomials $p_A(n), p_B(n)$, if found, satisfying the coboundary condition \cref{def:cbd_equiv_poly}: $$p_A(n)\cdot A(n)\cdot U(n+1) =p_B(n)\cdot U(n)\cdot B(n).$$

\textbf{Steps:}

1. Solve for the first coboundary matrix $U(1)$ up to a multiplicative factor using :
\begin{itemize}
    \item (order $m=2$) From \cref{lemma-necessary-condition-for-coboundary-matrix}, write the equation $L_A = U(1) (L_B)$, where $U(1)(\cdot)$ is the Möbius transformation, for the four unknowns of the first coboundary matrix
    
    $ U(1) = \begin{pmatrix} u_{11} & u_{12} \\ u_{21} & u_{22} \end{pmatrix} $.
    
    Writing the Möbius transformation explicitly, the equation reads $L_A = \frac{u_{11}L_B + u_{12}}{u_{21}L_B + u_{22}}$. Without loss of generality, the above equation can be written as
    $\frac{\alpha \varphi + \beta}{\gamma \varphi + \delta} = \frac{a \varphi + b}{c \varphi + d}$, where $a, b, c, d$ depend on $\{u_{ij}\}_{i,j\in\{1,2\}}$ and $L_B$ and $L_A = \frac{\alpha \varphi + \beta}{\gamma \varphi + \delta}$. By equating coefficients of powers of $\varphi$ in the numerator and denominator independently, we obtain four equations for the four unknowns of $U$.  (This must hold assuming the irrational number does not solve a quadratic equation, a condition met by all non-algebraic constants like $\pi$ and $\zeta(3)$.) The solution is the four rational numbers of $U(1)$ (up to a multiplicative factor).
    \item (general order $m$) From \cref{lemma-generalized-necessary-condition-for-coboundary-matrix}, write the equation $L_A = U(1) L_B$. Every row of this equation can be written as $L_{A_{i}} = U(1)_i \cdot L_B$ where $U(1)_i$ is the $i^{\text{th}}$ row of $U(1)$. Since we are solving over the rationals, PSLQ \citep{PSLQ} can be used to find a solution: $a_i \cdot L_{A_{i}} - \sum_{j=1}^m b_{ij} L_{B_{j}} = 0 \text{ (where $a_i, \{b_{ij}\}_{j=1}^m$ were found over the integers via PSLQ}) \Longrightarrow U(1)_{ij} = \frac{b_{ij}}{a_i}$. If the symbolic limits are known in terms of a set of algebraically independent constants, $U(1)$ can also be found by equating coefficients.
\end{itemize}

2. Propagate the coboundary matrix to the requested depth $N$ using the necessary condition for a coboundary equivalence:
\begin{itemize}
    \item $ A(n)\cdot U(n+1) \propto U(n) \cdot B(n) \Longrightarrow U(n+1) \propto A(n)^{-1} \cdot U(n) \cdot B(n)$
    \item The resulting matrices are again the rational matrices $\{U_i\}_{i=2}^N$, known only up to independent multiplicative factors.
\end{itemize}

3. Divide each of the ``measured'' $\{U(i)\}_{i=1}^N$ by, e.g., the $U_{11}$ element of each matrix.
\begin{itemize}
    \item Take care to pick an element $U_{11}, U_{12}, U_{21}, U_{22}$ (etc. for orders $> 2$) that does not zero out on $1, 2, \dots, N$. If all do then pick the element whose last 0 arrives at the earliest index and toss out all measurements preceeding this index.
    \item The result is a new list of matrices. If a polynomial coboundary relation exists between the two recurrences, $p_A(n) \cdot A(n) \cdot \tilde{U}(n+1) = p_B(n) \cdot \tilde{U}(n) \cdot B(n)$ for some polynomials $p_A(n)$ and $p_B(n)$, then the matrices we have found are precisely the result of dividing $\tilde{U}(n)$ by its (e.g.) first element. Meaning our measurements should be of a rational matrix.
\end{itemize}

4. Fit rational functions to each of the elements of the measured $U$:
\begin{itemize}
    \item Writing a general $m\times m$ rational matrix requires $2m^2$ polynomials $\{p_{ij}(n),q_{ij}(n)\}_{i,j=1}^m$. Equating the rational function of each element $\frac{p_{ij}(n)}{q_{ij}(n)}$ to the measurements of that element $U_{ij}(n)$, yields a system of linear equations for the coefficients of the numerator polynomial and the denominator polynomial: $\frac{p_{ij}(n)}{q_{ij}(n)} = U_{ij}(n) \Longrightarrow p_{ij}(n) - U_{ij}(n)q_{ij}(n) = 0$ for $n = 1,\dotsc, N$.
    \item The result is a rational coboundary matrix \textit{hypothesis} $U_h(n)$. Hypothesis---because this is a rational fit to our empirical coboundary matrices.
\end{itemize}

5. The final stage of the algorithm is to verify the hypothesis.
\begin{itemize}
    \item Multiply out $A(n) \cdot U_h(n+1)$ and $U_h(n) \cdot B(n)$. If the resulting matrices differ by a multiplicative factor (not a matrix, a rational function), meaning the condition $A(n) \cdot U_h(n+1) \propto U_h(n) \cdot B(n)$ holds, then the coboundary matrix hypothesis $U_h(n)$ is a valid, though still rational, coboundary matrix.
    \item Multiply both sides of $A(n)\cdot U_h(n+1) \propto U_h(n) \cdot B(n)$ by the least common multiple of the denominators of $U_h(n+1), U_h(n)$ to convert the coboundary relation into a polynomial one of the form $p_A(n)\cdot A(n) \cdot U(n+1) = p_B(n) \cdot U(n) \cdot B(n)$, where $p_A(n), p_B(n)$ are polynomials and $U(n)$ is a polynomial matrix.
\end{itemize}

6. Output $U(n)$, $p_A(n)$ and $p_B(n)$ (which are defined only if $U_h$ was valid).

This algorithm (for $m=2$) is also available on our interactive, online algorithm demonstration \citep{algorithm_demo}.

\subsubsection{Sufficiency of UMAPS}
We can show that UMAPS is sufficient to recover the coboundary matrix, provided an upper bound on the degrees of the polynomials in its rational entries. We now proceed to prove \cref{corollary-suffiency-of-umaps}. According to this corollary, the sensitivity study (\cref{appendix-sensitivity-study}) indicates that any coboundary matrix among the harvested formulas that has not been recovered must involve rational entries with polynomials of degree at least 60. Let us restate the corollary:

\textbf{Corollary \ref{corollary-suffiency-of-umaps}.}
    \textit{(Sufficiency of UMAPS.) If a coboundary matrix exists for two matrices and every rational-function entry of the coboundary matrix has polynomials of degree at most $d$, then running UMAPS with $N \geq 2d + 1$ suffices to recover the coboundary matrix.}

\begin{proof}
    By \cref{lemma-necessary-condition-for-coboundary-matrix} (and the generalization to any recurrence order, \cref{lemma-generalized-necessary-condition-for-coboundary-matrix}), two matrices that are coboundary have limits related by the transformation $U(1)$. Therefore, during the execution of UMAPS, the quantities $U(1), \dots, U(N)$ correspond to correct measurements of $U(n)$.
    
    Assume that each rational-function entry of the coboundary matrix has numerator and denominator polynomials of degree at most $d$. This gives a total of $2d + 2$ coefficients. Since multiplying both the numerator and denominator by a common nonzero factor yields the same rational function, there are effectively $2d + 1$ independent degrees of freedom.
    
    Choosing $N = 2d + 1$ produces a linear homogeneous system of $2d + 1$ equations in $2d + 2$ unknowns for each element of the coboundary matrix. If a coboundary matrix exists, then it is unique up to scale by \cref{lemma-coboundary-matrix-uniqueness}. So this system has a one-dimensional null space, corresponding to a unique solution up to overall scale. Increasing $N$ beyond $2d + 1$ introduces linearly dependent equations, leaving the null space unchanged.
    
    Hence, a rational fit to the empirical measurements of $U(n)$ will recover the rational-function entries whenever $N \geq 2d + 1$.
\end{proof}

\subsubsection{Rational function fitting details}
A rational function fit is attempted in step 4 of UMAPS, detailed above. Consult \citep{stoer2002numerical} for a thorough treatment of rational fitting. The rational fit is conducted by finding the null space of a linear system of equations:

When fitting rational functions, the equations are of the form $\frac{P(i)}{Q(i)} = \frac{p_i}{q_i}$, where $P,Q$ are the polynomials of the rational function and $\frac{p_i}{q_i}$ is the empirical value at $i$. This leads to equations of the form $q_i \sum_{k=0}^{deg_P} P_k i^k - p_i \sum_{k=0}^{deg_Q} Q_k i^k = 0$, for the $deg_P + deg_Q + 2$ polynomial coefficients. These coefficients are determined by finding the null space of the system of equations.

In practice, the system is a SymPy matrix of integers (e.g. $q_i i^k$ for $P_k$). SymPy does this via the linear system’s reduced row-echelon form, and uses rational number objects (not floats), so precision is guaranteed.

Since the rational fit is not a least-squares fit, noise and finite precision are not sources of error. Analytical correctness of the fit is guaranteed by subsequent substitution into the coboundary condition. Once a matrix is successfully recovered, \cref{lemma-coboundary-matrix-uniqueness} guarantees this is \textit{the} coboundary matrix, due to $U(n)$’s projective uniqueness.

\subsection{Conversion of formulas to canonical form}
\label[appendix]{appendix-algs-canonicalization}
We use RISC's \textit{Guess} tool \citep{kauers2022guessing} to convert formulas into recurrences, after which correctness is validated numerically. Minimality is then guaranteed using Maple's \textit{MinimalRecurrence} package \citep{mapleminimalrec} (additional details available in \citep{zhouMinRec}). Finally, formulas yielding second-order recurrences are represented as continued fractions. The proper initial conditions needed for the continued fractions to generate the formulas are found (\cref{appendix-algs-series-limit-to-cf-limit}).

Our use of RISC's \textit{Guess} algorithm and the following steps can be summarized by:

\textbf{Input:} First $N$ approximants of a formula.

\textbf{Output:} Minimal polynomial recurrence generating the formula.

As more terms of a sequence are considered, the likelihood that a guessed recurrence fits only the initial terms but fails for later ones decreases. In our experiments, we use $N=200$ partial sums when converting each series into a corresponding recurrence. The hyperparameter sensitivity study (\cref{appendix-sensitivity-study}) supports this choice.

Once a recurrence is identified for a formula with a known closed form, we verify it by substituting the formula back into the recurrence. If $\mathcal{R}$ is the recurrence operator and $f(n)$ is the formula, then the goal is to show $\forall n: \mathcal{R}[f](n) = 0$. This verification is done numerically, since substituting full series expressions or complex closed forms into the recurrence does not always allow algebraic simplification. We perform 30 random numerical substitutions into the expression $\mathcal{R}[f](n)$, all of which must agree to within $10^-{10}$ for the recurrence to be accepted. All recurrences were successfully validated under this criterion.

Formulas given by a polynomial recurrence of order 2 can be represented as polynomial $2 \times 2$ companion matrices (polynomial continued fractions): given a recurrence $x_n$
$$c(n)x_n=a(n)x_{n-1}+b(n)x_{n-2}$$

the corresponding canonical form is
$$\text{PCF}\left( a(n), b(n)c(n-1)\right)$$
Which is achieved using inflation by $c(n)$ (see \ref{appendix-Coboundary-transform}).

Converting a general series to a continued fraction can be done using a technique devised by Euler, however, this technique relies on algebraic manipulation of the term of the series and also does not necessarily yield \textit{polynomial} recurrences, let alone recurrences of order 2. A formula can have a recurrence of order 2 that is not polynomial, but have minimal polynomial recurrence of order 3 (see for example \cref{appendix-risc-guess-results}).

\subsubsection{Conversion of series limit to Polynomial Continued Fraction (PCF) limit}
\label[appendix]{appendix-algs-series-limit-to-cf-limit}

\textbf{Input:} Series, with partial sums (not summand) to index $n$ given by $S(n)$, and continued fraction $CF(a_n, b_n)$ from the recurrence fit to the series using RISC's tool.

\textbf{Output:} $x \in \mathbb{Q}$ such that the initial conditions
$$\begin{pmatrix}
S_0 & xS_1 \\
1 & x
\end{pmatrix}$$
generate the partial sums of the entire series when the continued fraction's recurrence is applied from index 2.
By demanding the second convergent of the continued fraction is equal to the second partial sum of the series (recall our notation for the Möbius transform defined by matrix $U$ - $U(\cdot)$):
$$
\left(
\begin{pmatrix}
S_0 & xS_1 \\
1 & x
\end{pmatrix}
\cdot
\begin{pmatrix}
0 & b_2 \\
1 & a_2
\end{pmatrix}
\right)(0)
=
S_2
$$
we arrive at the equation
$$\frac{S_0b_2 + xS_1a_2}{b_2 + xa_2} = S_2$$
Solving for x, we obtain
$$x = \frac{-b_2}{a_2} \left( \frac{S_2 - S_0}{S_2 - S_1} \right)$$ 

Note that only the first three partial sums of the series $S_0$, $S_1$ and $S_2$ (more generally $n_0$, $n_0+1$, $n_0+2$, where $n_0$ is the start index of the series summation) and a single partial-numerator and partial-denominator pair of the continued fraction are needed. This is because the continued fraction values $\frac{p_n}{q_n}$ satisfy the recurrence, hence when equating the first two values to the values of the sum we get the exact same series.

The algorithm's result can be converted to initial conditions for when the continued fraction's recurrence is applied from any other index by multiplying by recurrence matrices of appropriate indices and their inverses, i.e. the initial condition that calculates the series starting from $n=1$ (using the same notation as \cref{eq:step_matrix}) is

\begin{equation*}
    \underbrace{
    \begin{pmatrix}
    S_0 & xS_1 \\
    1 & x
    \end{pmatrix}
    \cdot
    \begin{pmatrix}
    0 & b_1 \\
    1 & a_1
    \end{pmatrix}^{-1}
    }_{\text{new initial conditions}}
    \cdot
    \begin{pmatrix}
    0 & b_1 \\
    1 & a_1
    \end{pmatrix}
    \cdot
    \begin{pmatrix}
    0 & b_2 \\
    1 & a_2
    \end{pmatrix}
    \cdot
    \begin{pmatrix}
    0 & b_3 \\
    1 & a_3
    \end{pmatrix}
    \dots
\end{equation*}

This method is convenient since there is no need to keep track of transformations applied to the series' recurrence in the process of conversion to continued fraction. In summary, the correct initial conditions for recreating the series from the recurrence are found using only the recurrence and first few partial sums of the series.

This algorithm is also available on our online algorithm demonstration \citep{algorithm_demo}.

\subsection{Recurrence generation by Conservative Matrix Field (CMF)}
\label[appendix]{append-algs-cmf-trajectories}

A recurrence is generated from a CMF by selecting a starting point and a trajectory in the CMF. We calculate their corresponding trajectory matrix $M(n)$ (see \ref{appendix-section-trj-mat}), and finally convert it to the canonical companion form (see \ref{appendix-second-order-recurrences}) in order to generate a recurrence.

\textbf{Input:} A CMF, a starting point $p$ and a trajectory $t$, both vectors of dimension equal to that of the CMF. Denote the number of matrices in the CMF (the CMF's \textit{dimension}) by $d$ and the number of rows and columns in these square matrices (the CMF's \textit{rank}) by $r$.

\textbf{Output:} Polynomial recurrence of order $r$.

\textbf{Steps:}

1. Compute a single trajectory step from $p + (n-1)t$ to $p+nt$ (see \cref{appendix-section-trj-mat} for a more rigorous definition):
\begin{itemize}
    \item initialize $M = \textbf{I}_{r \times r}$ and $cur\_pos = p + (n-1)t$.
    \item for i in $1, \dotsc, d$:
    \item \hspace{2em} for j in range($t_i$):
    \item \hspace{4em} $M = M \cdot M_{x_i}(cur\_pos)$
    \item \hspace{4em} $cur\_pos = cur\_pos + \textbf{e}_i$ (where $\textbf{e}_i$ is 1 at index $i$ and zero elsewhere)
\end{itemize}

By the end of this loop $cur\_pos = p + nt$ and $M$ is the work matrix from $p + (n-1)t$ to $p + nt$.

2. If interested in companion form (\cref{def:companion-form}, which corresponds to a PCF), bring the resulting rational matrix $M$ into polynomial companion form per \cref{lem:formula_genrating_iff_CM}.

3. Return $M$.

It is worth mentioning that there are degrees of freedom in the calculation of the trajectory matrix, due to the conserving property of the CMF. The order of matrix multiplication can be selected, which is advantageous in preventing matrix singularities.

Our search space for formulas in the CMF defined in Eqs. (\ref{def:the_pi_cmf_in_main_text}) and (\ref{def:the_pi_cmf}) consisted of the trajectory subspace $T_{3} =\{(a, b, c) \in \mathbb{Z}^3 : max\{|a|, |b|, |c|\} \leq 3\}$ and the initial positions $P_1 = (\frac{1}{2},\frac{1}{2},\frac{1}{2}) + \{(a, b, c) \in \mathbb{Z}^3 : max\{|a|, |b|, |c|\} \leq 1\}$. In cases where $M(n)$ had a singularity at some point along the trajectory $p+kt$ (where $t \in T_{3}$ and $p \in P_{1}$), we shifted the initial position to $p' = p+(k+1)t$ to avoid it.

As can be seen from the definition of $P_1$, starting points are actually non-integer. The CMF starting points appearing in dark blue in \cref{fig:cmf-unification} are shown in terms of the difference from $(\frac{1}{2}, \frac{1}{2}, \frac{1}{2})$ which is the true $(0, 0, 0)$ point.

The full dataset of recurrences generated by the CMF for this study is included in the supplementary material (described in \cref{appendix-results-tables}).

\section{Hyperparameter sensitivity study}
\label[appendix]{appendix-sensitivity-study}

Below we provide a sensitivity study showing the linkage percentage and runtime for different $\delta$-clustering granularities and similarity thresholds, values of UMAPS’s fit depth, as well as an analysis of the sensitivity of RISC’s Guess algorithm to its fit depth. Parameters used in the paper are denoted in bold. Runtimes are normalized by the runtimes for the parameters used in the paper. Runs required for the sensitivity study were conducted on the Technion High Performance Computing Zeus Cluster.

\begin{table}[h!]
\centering
\caption{Linkage rate via UMAPS for different $\delta$ clustering granularities and similarity thresholds in the graph growing algorithm (\cref{appendix-algs-graph-growing}). The numbers listed are \% of max linkage achieved (136) between canonical form formulas. The linkage percentage with no clustering criterion is found by setting similarity threshold = 2 and is 100\%, as expected.}
\begin{tabular}{cccccccc}
\toprule
& & \multicolumn{6}{c}{Clustering granularity} \\
\cmidrule(lr){3-8}
& Similarity threshold & 0.01 & \textbf{0.05} & 0.1 & 0.2 & 0.3 & 0.5 \\
\midrule
& 0.001 & 85  & 76  & 68  & 58  & 54  & 64  \\
& 0.003 & 95  & 79  & 74  & 64  & 60  & 70  \\
& 0.005 & 100 & 80  & 75  & 64  & 60  & 70  \\
& 0.01  & 100 & 97  & 83  & 64  & 62  & 73  \\
& 0.02  & 100 & 100 & 86  & 64  & 65  & 73  \\
& \textbf{0.03} & 100 & \textbf{100} & 86  & 64  & 65  & 73  \\
& 0.05  & 100 & 100 & 100 & 73  & 71  & 73  \\
& 0.1   & 100 & 100 & 100 & 100 & 90  & 73  \\
& 0.2   & 100 & 100 & 100 & 100 & 100 & 87  \\
& 0.3   & 100 & 100 & 100 & 100 & 100 & 100 \\
& 0.5   & 100 & 100 & 100 & 100 & 100 & 100 \\
\bottomrule
\end{tabular}
\end{table}

\begin{table}[t!]
\centering
\caption{Runtime for different $\delta$ clustering granularities and similarity thresholds. The algorithm (\cref{appendix-algs-graph-growing}) first matches formulas to find hubs (stage~1), then matches each hub with CMF representatives (stage~2). 
For coarser granularities ($>0.1$), the runtime shows three phases across similarity thresholds (regular text, bold, regular). This can be explained by the number of hubs from stage 1 (decrease as similarity threshold increases) vs. the number of CMF representatives that meet the similarity criterion in stage 2 (increase as similarity threshold increases). The runtime value results from their product, which is largest for medium similarity threshold values.
Runtime without clustering (similarity threshold = 2): 20.0.}
\begin{tabular}{cccccccc}
\toprule
& & \multicolumn{6}{c}{Clustering granularity} \\
\cmidrule(lr){3-8}
& Similarity threshold & 0.01 & \textbf{0.05} & 0.1 & 0.2 & 0.3 & 0.5 \\
\midrule
& 0.001 & 0.8 & 0.6 & 0.8 & 0.8 & 0.7 & 0.7 \\
& 0.003 & 0.9 & 0.8 & 1.1 & 1.1 & 1.1 & 1.0 \\
& 0.005 & 1.1 & 0.9 & 1.3 & 1.2 & 1.3 & 1.2 \\
& 0.01  & 1.0 & 0.7 & 1.2 & \textbf{2.0} & \textbf{2.0} & 1.3 \\
& 0.02  & 1.0 & 1.0 & 1.5 & \textbf{3.4} & \textbf{3.5} & \textbf{1.6} \\
& \textbf{0.03} & 1.0 & \textbf{1.0} & 1.4 & \textbf{4.7} & \textbf{4.9} & \textbf{2.4} \\
& 0.05  & 1.0 & 1.0 & 1.0 & \textbf{4.0} & \textbf{3.9} & \textbf{3.2} \\
& 0.1   & 1.0 & 1.0 & 0.9 & 1.0 & \textbf{2.9} & \textbf{5.4} \\
& 0.2   & 0.9 & 0.9 & 0.9 & 0.9 & 0.9 & \textbf{6.1} \\
& 0.3   & 1.0 & 0.9 & 0.9 & 1.0 & 0.9 & 1.0 \\
& 0.5   & 1.0 & 0.9 & 1.1 & 1.0 & 0.9 & 0.9 \\
\bottomrule
\end{tabular}
\end{table}

\begin{table}[t!]
\centering
\caption{Linkage rate and runtime for different maximum fitting depths in UMAPS. The total runtime of the graph growing algorithm (\cref{appendix-algs-graph-growing}) has a clear minimum. The two forces at play: the average UMAPS time increases with $N$, while the number of hubs from stage~1 decreases with $N$; the runtime (their product) is smallest at intermediate $N$. Note the maximum polynomial degree found as a function of $N$ closely follows the upper bound stipulated by \cref{corollary-suffiency-of-umaps}.}
\resizebox{\textwidth}{!}{
\begin{tabular}{cccccc}
\toprule
UMAPS~$N$ & Found relation (canonical) (x/153) & Same~CMF (canonical) (x/153) & Max poly deg found & Avg UMAPS time & Total runtime \\
\midrule
4   & 28  & 22  & 1  & 0.5 & 6.7   \\
8   & 45 & 38 & 3  & 0.5 & 3.7   \\
12  & 80 & 67 & 5  & 0.7 & 3.1   \\
16  & 95 & 78 & 7  & 0.8 & 2.5   \\
20  & 130 & 81 & 9  & 0.9 & 1.2   \\
24  & 132 & 81 & 11 & 0.9 & 1.0   \\
29  & 136 & 81 & 13 & 0.8 & 0.8   \\
30  & 136 & 81 & 14 & 1.1 & 0.9   \\
\textbf{40} & \textbf{136} & \textbf{81} & \textbf{14} & \textbf{1.0} & \textbf{1.0} \\
50  & 136 & 81 & 14 & 1.0 & 1.3   \\
65  & 136 & 81 & 14 & 1.1 & 2.9   \\
80  & 136 & 81 & 14 & 1.7 & 9.7   \\
100 & 136 & 81 & 14 & 7.1 & 48.8  \\
120 & 136 & 81 & 14 & 18.3 & 137.3 \\
\bottomrule
\end{tabular}
}
\end{table}

\begin{table}[h!]
\centering
\caption{Success rate and runtime of RISC's Guess \citep{kauers2022guessing} for different fit depths $N$ over 100 random formulas. These findings indicate that Guess generates robust fits that generalize well.}
\begin{tabular}{cccc}
\toprule
Guess $N$ & Recurrences found (x/100) & Of which correct & Avg runtime \\
\midrule
8   & 0   & 0   & 0.02 \\
10  & 24  & 24  & 0.05 \\
15  & 30  & 30  & 0.08 \\
20  & 57  & 57  & 0.18 \\
25  & 76  & 76  & 0.26 \\
30  & 92  & 92  & 0.31 \\
50  & 99  & 99  & 0.46 \\
100 & 100 & 100 & 0.67 \\
\textbf{200} & \textbf{100} & \textbf{100} & \textbf{1.00} \\
500 & 100 & 100 & 2.79 \\
\bottomrule
\end{tabular}
\end{table}

\FloatBarrier

\section{Recurrences and limit-preserving transformations}
\label[appendix]{appendix-maths}

\subsection{On linear recurrences}

Recurrences, also known as ``difference equations,'' \citep{alma990000938590203596} are the discrete analog of differential equations. They play a prominent role in various areas of mathematics and science, including Newton's approximation algorithms, counting problems (combinatorics), special functions, and the modeling of economic and biological systems.
In this section, we revisit the notion of a linear recurrence that we introduced in \cref{section-math-backround}.

A function \(u_n\) satisfies a recurrence of order $m$ if it is a solution to the equation:
\[
u_n = a_{1,n} u_{n-1} + a_{2,n} u_{n-2}+\ldots a_{m,n}u_{n-m}
\]
It is customary to represent the recurrence via the associated companion matrix:
\begin{equation} 
\operatorname{CM}(n) \coloneqq
\begin{pmatrix}
0 & 0 & \dots & 0 & a_{m,n} \\
1 & 0 & \dots & 0 & a_{m-1,n} \\
0 & 1 & \dots & 0 & a_{m-2,n} \\
\vdots & \vdots & \ddots & \vdots & \vdots \\
0 & 0 & \dots & 1 & a_{1,n}
\end{pmatrix}
\end{equation}

Observe that \([u_{n-m},\dots, u_{n-2}, u_{n-1}]\operatorname{CM}(n) = [u_{n-(m-1)},\dots,u_{n-1}, u_n]\). Thus, the companion matrix represents a single step in the recurrence at time \( n \).
By incrementally multiplying the companion matrix over $n$ steps, we get the matrix: 
\begin{equation}\label{eq:step_matrix-apend}M_n\coloneqq
\prod_{i=1}^n\operatorname{CM}(i) =
\begin{pmatrix}
p_{1,n-m}  & \dots &  p_{1,n-1} & p_{1,n} \\
p_{2,n-m} & \dots &  p_{2,n-1} & p_{2,n} \\
p_{3,n-m}  &\dots &  p_{3,n-1} & p_{3,n} \\
\vdots & \ddots & \vdots & \vdots \\
p_{m,n-m} & \dots & p_{m,n-1} & p_{m,n}
\end{pmatrix}
\end{equation}
which we call the $n$-th step matrix.
The functions $p_{1,n},\ldots p_{m,n}$ are solutions to the recurrence equation with initial conditions $p_{i,j}=\delta_i^j $, and any other solution is a linear combination of these.
Hence this matrix encapsulated all the information about the recurrence.
Explicitly to get a different set of solutions we multiply the step matrix $M_n$ from left by the initial condition matrix $U_\text{init}$ where each row represents the initial condition for a solution.

Alternatively, let \( A(n) \in \operatorname{PGL}_m\left( \mathbb{Q}(n)\right) \) be a matrix with coefficient functions. Similar to the process done for the companion matrix,  we incrementally multiply the matrix:
\[
\mathcal{A}_n \coloneqq \prod_{i=1}^n A(i)
\]
We regard $\mathcal{A}_n$ as a ``cocycle'' (see \ref{appendex- group cocycle} and \ref{appendex- dymanic cocycle}).  We are fundamentally interested in the situation where the matrix $A(n)$ is coboundary equivalent (\cref{def:cbdry_rational_matix}) to the companion matrix of a recurrence. 

In the context of Diophantine approximation formulas for constants, we examine the ratios of elements in the last column of the step matrix \( M_n \). Diophantine approximation formulas can also be derived by taking ratios of elements in \(\mathcal{A}_n\).

We will now provide a detailed analysis for the case of second-order recurrences. In this case, we utilize the fact that a 2-by-2 matrix acts by Möbius transformation (linear fractional transformation) on \(\mathbb{R} \cup \infty\), and we provide a complete characterization of coboundary equivalence between matrices.
\subsection{Second-order recurrences}
\label[appendix]{appendix-second-order-recurrences}
Recall that any 2-by-2 matrix \( M \) can perform a fractional linear transformation. The action of \( M \) on the number \( l \) is defined as:
\begin{equation}
    \label{def:Möbius}
\begin{pmatrix}
    a & b \\
    c & d
\end{pmatrix} (l) = \frac{a \cdot l + b}{c \cdot l + d}
\end{equation}
This is known as the Möbius transformation of the matrix acting on \( l \). It is customary to extend this action to $\mathbb{R}\cup \infty$ by defining $\begin{pmatrix}
    a & b \\
    c & d
\end{pmatrix} (\infty)=\frac{a}{c}$.

Let \( a_n, b_n \in \mathbb{Q}(n) \) be rational functions, and consider the recurrence relation \( u_n = a_n u_{n-1} + b_n u_{n-2} \).
By incrementally multiplying the companion matrix \(\operatorname{CM}(n)\) and examining the ratio of the elements in the first (or second) column, we effectively apply a Möbius transformation. Explicitly, we have:
\[
M_N (0)  = \frac{p_n}{q_n}, \quad M_N (\infty) = \frac{p_{n-1}}{q_{n-1}}
\]

Alternatively, we can start with a general matrix \( A(n) \in \operatorname{PGL}_2\left( \mathbb{Q}(n)\right) \) and incrementally multiply to obtain the matrix:

\[
\mathcal{A}_n \coloneqq \prod_{i=1}^n A(i)
\]

We can then examine the ratios of elements in the first or second column of this matrix, or even apply it as a Möbius transformation at a value \( l \in \mathbb{Q} \). This can potentially provide a Diophantine approximation to some constant, similar to how a recurrence relation provides a formula via a Möbius transformation of the step matrices.

We call such a matrix \( A(n) \) a \textit{formula generating matrix} if, for any \( l \in \mathbb{Q} \cup \{\infty\} \), the sequence \(\mathcal{A}_n( l)\) provides an approximation formula for a constant \( L \).

Indeed as the following lemma asserts any such matrix is coboundary equivalent (see Definition~\eqref{def:cbdry_rational_matix}) to a companion matrix of a recurrence equation.
\begin{lemma}
    \label{lem:formula_genrating_iff_CM}
    A matrix $A(n) \in \operatorname{PGL}_2\left( \mathbb{Q}(n)\right)$ is a formula-generating matrix if and only if there exist a matrix $U(n) \in \operatorname{PGL}_2\left( \mathbb{Q}(n)\right)$ and a companion matrix $\operatorname{CM}(n)$ associated to a recurrence $u_n$, such that
    \[
    U(n)\cdot A(n) = \operatorname{CM}(n) \cdot U(n+1)
    \]
    
\end{lemma}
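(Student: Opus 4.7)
The plan is to handle both directions by passing to step-matrix form of the coboundary relation, and in the harder direction by exhibiting an explicit coboundary that brings any $A(n)$ into companion form.

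For the $(\Leftarrow)$ direction, I would iterate the hypothesis $U(n)A(n) = \operatorname{CM}(n) U(n+1)$ from $n=1$ up to $N$ to obtain the telescoped identity $U(1)\,\mathcal{A}_N = \mathcal{CM}_N\, U(N+1)$, where $\mathcal{A}_N$ and $\mathcal{CM}_N$ denote the respective step matrices. Rewriting this as $\mathcal{A}_N = U(1)^{-1}\,\mathcal{CM}_N\,U(N+1)$ and applying both sides as Möbius transformations at an arbitrary $l\in\mathbb{Q}\cup\{\infty\}$ yields
\[
\mathcal{A}_N(l) \;=\; U(1)^{-1}\bigl(\mathcal{CM}_N(U(N+1)(l))\bigr).
\]
Since $U(N+1)(l)$ is a rational function of $N$, it tends to some $\tilde l \in \mathbb{R}\cup\{\infty\}$. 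The dominant-solution asymptotics of a second-order linear recurrence (used in essentially the same way as in the proof of \cref{lemma-necessary-condition-for-coboundary-matrix}) force $\mathcal{CM}_N(\tilde l)\to L$ for every $\tilde l$ outside a single exceptional value. Transporting the limit back through the fixed Möbius transformation $U(1)^{-1}$ shows that $\mathcal{A}_N(l)$ converges to $U(1)^{-1}(L)$, so $A(n)$ is formula-generating.

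For the $(\Rightarrow)$ direction my approach is to show that in fact \emph{every} $A(n)\in\operatorname{PGL}_2(\mathbb{Q}(n))$ is coboundary-equivalent to a companion matrix via an explicit construction, and then run the step-matrix identity of the first direction in reverse so that the companion matrix inherits the formula-generating property. Writing $A(n)=\left(\begin{smallmatrix}a & b\\ c & d\end{smallmatrix}\right)$ and looking for $U(n) = \left(\begin{smallmatrix}0 & 1\\ f(n) & g(n)\end{smallmatrix}\right)$ paired with $\operatorname{CM}(n)=\left(\begin{smallmatrix}0 & \beta(n)\\ 1 & \alpha(n)\end{smallmatrix}\right)$, I would equate the four entries of $U(n)A(n)$ with those of $\operatorname{CM}(n)U(n+1)$. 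Three of the resulting scalar relations determine $\beta(n)=c(n)/f(n{+}1)$, $\alpha(n)=(f(n)a(n)+g(n)c(n))/f(n{+}1)$, and the shift $g(n{+}1)=d(n)f(n{+}1)/c(n)$; the fourth collapses to the single algebraic consistency condition $f(n) = -c(n)/\det A(n)$. This produces a valid $U(n)$ with rational-function entries, and running the step-matrix telescoping in reverse then identifies the limit of $\mathcal{CM}_N$ as the Möbius image of the limit of $\mathcal{A}_N$, so the associated recurrence genuinely supplies a Diophantine approximation and $\operatorname{CM}(n)$ sits in the required class.

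The main obstacle I anticipate is degeneracy of the explicit construction: the formulas for $f(n)$ and $g(n)$ break down on the zero sets of $c(n)$ and $\det A(n)$. One can bypass these either by swapping the ansatz for $U(n)$ (for instance using the $(1,1)$ entry as the pivot when $(2,1)$ vanishes) or by exploiting the $\operatorname{PGL}$ rescaling freedom to clear common factors before inverting, yielding only finitely many bad values of $n$ that one can absorb into initial conditions. A secondary subtlety in the forward direction is the finitely many exceptional $\tilde l$ for which the Möbius action $\mathcal{CM}_N(\tilde l)$ might not stabilize; this is harmless because the rational function $U(N{+}1)(l)$ generically lands outside this exceptional locus, and the remaining sporadic $l$ do not affect the formula-generating status of $A(n)$.
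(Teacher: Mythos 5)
Your $(\Leftarrow)$ direction is essentially the paper's argument: the paper proves that companion matrices are formula-generating via the same dominant-solution asymptotics, and you additionally spell out the telescoping transfer through $U(1)$ and $U(N+1)$, which the paper leaves implicit; your treatment of the exceptional value is loose, but no looser than the paper's own.

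The genuine gap is in the $(\Rightarrow)$ direction. You claim that \emph{every} $A(n)\in\operatorname{PGL}_2(\mathbb{Q}(n))$ is coboundary to a companion matrix, so you never use the formula-generating hypothesis at all. Your construction with $U(n)=\left(\begin{smallmatrix}0&1\\ f(n)&g(n)\end{smallmatrix}\right)$ and $f=-c/\det A$ requires $c(n)\not\equiv 0$, and the problematic case is not isolated zeros of $c$ (the coboundary relation is an identity of rational functions, so isolated zeros cost nothing and there is nothing to ``absorb into initial conditions'') but $c(n)\equiv 0$. A pivot swap does reduce the triangular case with $b\not\equiv 0$ to the generic one, but when $A$ is diagonal ($b\equiv c\equiv 0$) no ansatz with a prescribed zero entry can work: already for $A=I$ the $(1,1)$ entry of $\operatorname{CM}(n)\,U(n+1)$ is $\beta(n)f(n+1)$ while the $(1,1)$ entry of $U(n)A(n)$ is $0$, forcing either $f\equiv 0$ (singular $U$) or $\beta\equiv 0$ (degenerate $\operatorname{CM}$). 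This is exactly where the paper invokes the hypothesis: if the $(2,1)$ entry $\gamma$ vanishes identically, formula-generation forces the $(1,2)$ entry $\beta\not\equiv 0$ (otherwise $\mathcal{A}_n(0)\to 0$ while $\mathcal{A}_n(\infty)=\infty$), and a second explicit choice $U(n)=\left(\begin{smallmatrix}1&0\\ \alpha(n-1)&\beta(n-1)\end{smallmatrix}\right)$ then brings $A(n)$ to companion form. So as written, your forward direction needs either that one-line use of the hypothesis to exclude diagonal $A$, or an unproved cyclic-vector-type theorem to back the unconditional claim; the remedies you propose (entry pivoting, discarding finitely many $n$) address neither.
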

\begin{proof}
    First, assume 
    \[A(n) = \begin{pmatrix}
        \alpha(n) & \beta(n) \\
        \gamma(n) & \delta(n)
    \end{pmatrix}
    \]
    is a formula-generating matrix, and denote by $\mathcal{A}_n = \prod_{i=1}^n A(i)$.
    If $\gamma(n)=0$ then the bottom left entry in $\mathcal{A}_n$ will remain $0$ for any $n$, resulting in $\lim_{n\to \infty} \mathcal{A}_n.\infty = \infty$. In this case it must be that $\beta(n)$ is nonzero since if it is zero then so is the top right entry of $\mathcal{A}_n$, resulting in $\lim_{n\to \infty} \mathcal{A}_n.0 = 0\ne \infty$.
    Hence we may assume that either $\gamma(n)$ or $\beta(n)$.
    
    If $\gamma(n)\ne 0$ then define 
    \[
    U(n) = \begin{pmatrix}
    \gamma(n) & -\alpha(n) \\
    0 & 1
    \end{pmatrix}
    \]
    And get that
    \[
    U(n)\cdot A(n)\cdot U^{-1}(n+1) = 
    \begin{pmatrix}
    0 &  -\alpha({n}) \delta(n) + \beta(n)\gamma({n}) \\
    \frac{\gamma({n})}{\gamma({n+1})} & \frac{\alpha(n+1)\gamma({n})}{\gamma({n+1})}+\delta({n})
    \end{pmatrix}
    \]
    And if $\gamma(n)=0$ define
    \[
    U(n) =\begin{pmatrix}
    1 & 0 \\
    \alpha({n-1}) & \beta({n-1})
    \end{pmatrix}
    \]
    
    to get that

    \scalebox{0.9}{
    $
    \begin{aligned}
    U(n)\cdot A(n)\cdot U^{-1}(n+1) \\ &=
    \begin{pmatrix}
    0 & 1 \\
    -\frac{\alpha(n)(\alpha(n-1)\beta(n)+\beta(n-1)\delta(n))}{\beta(n)})+\alpha({n-1})\alpha(n) & \frac{\alpha(n-1)\beta(n)+\beta(n-1)\delta(n)}{\beta(n)})
    \end{pmatrix}
    \end{aligned}
    $
    }
    
    Where these matrices are projectively equivalent to a matrix in a companion form \citep{esma}.

    To conclude the proof, it is left to show that companion form matrices are formula-generating.
    
    Let $\operatorname{CM}(n)= \begin{pmatrix}
        0 & b_n \\
        1 & a_n
    \end{pmatrix}$ 
    be a companion form matrix with $\displaystyle \lim_{n\to \infty} \frac{p_n}{q_n}=L$. 
    \[
    M_n(l) = \frac{p_{n-1}\cdot l+ p_n}{q_{n-1}\cdot l+q_n}
    = \frac{p_n}{q_n} \cdot \frac{\frac{p_{n-1}}{p_n}\cdot l+ 1}{\frac{q_{n-1}}{q_n}\cdot l+1} 
    \]
    And since the ratio of $\frac{p_n}{q_n}$ converges,  the growth rate of these solutions is the same, implying that $\displaystyle \lim_{n\to \infty} \left( \frac{\frac{p_{n-1}}{p_n}\cdot l+ 1}{\frac{q_{n-1}}{q_n}\cdot l+1}\right)  = 1$ making $\displaystyle \lim_{n\to \infty} M_n(l)=L$.
\end{proof}

\subsection{Proof of expected limit value}
\label[appendix]{definition-Poincare-Perron-CF}
We call a companion matrix $A(n)$  Poincaré-Perron type if the recurrence relation represented by $A(n)$ is of Poincaré-Perron type, that is
    \begin{equation*}
        u_n = a_{1,n} u_{n-1} + a_{2,n} u_{n-2} + \ldots + a_{m,n} u_{n-m} 
    \end{equation*}
    where the limit \begin{equation*}
        \lim_{n\to \infty} a_{i,n}=\alpha_i \in \mathbb{R}
    \end{equation*} exists and is finite, and the characteristic roots, i.e., the solutions $\lambda_i$ to the equation
    \begin{equation*}
        \sum_i \alpha_i \cdot X^{m-i}=X^m
    \end{equation*}
    are all with different absolute values
    \begin{equation*}
        |\lambda_1|>\dots>|\lambda_m|
    \end{equation*}
    In this case, the theorem of Poincaré-Perron \citep[Chapter~8, Theorem~10]{ElaydiSaber2006AItD} states that there are $m$-linearly independent solutions $P_i$ to the recurrence $u_n$ such that 
    \begin{equation*}
        \lim_{n\to \infty}\frac{P_i(n+1)}{P_i(n)} = \lambda_i
    \end{equation*}
\begin{lemma}
    \label{lemma-limit-value}
    Given a companion matrix $A(n)$ with the step matrix:
    \begin{equation*}
    \mathcal{A}_n =\prod_{i=1}^n A(i)= 
    \begin{pmatrix}
    p_{1,n-m} & \dots &  p_{1,n-1} & p_{1,n} \\
    p_{2,n-m} & \dots &  p_{2,n-1} & p_{2,n} \\
    p_{3,n-m} & \dots &  p_{3,n-1} & p_{3,n} \\
    \vdots & \ddots & \vdots & \vdots \\
    p_{m,n-m} & \dots & p_{m,n-1} & p_{m,n}
    \end{pmatrix}
    \end{equation*}
    Assume $A(n)$ represents a Poincaré-Perron companion matrix with fundamental roots
    $$| \lambda_1|>\dots>| \lambda_m |.$$
    Then any column of the limit matrix
     \begin{equation*}
    \widetilde{\mathcal{A}}=\lim_{n\to \infty}  
    \mathcal{A}_n^{-t}
    \end{equation*}
    is projectively equivalent to the vector $\hat{X}=\begin{pmatrix}
    X_{1}  \\
    X_{2}  \\
    X_{3} \\
    \vdots \\
    X_{m}
    \end{pmatrix} $, where $\hat{X}$ is the initial condition for the "slowest" growing solution and $X_i\ne 0$, i.e, the one that grows like $|\lambda_m|^n$. In particular
    \begin{equation*}
        \frac{\widetilde{\mathcal{A}}_{i,j}}{\widetilde{\mathcal{A}}_{i',j}} = \frac{X_i}{X_{i'}}
    \end{equation*}
\end{lemma}
\begin{proof}
    %
    Let $y_n$ be a solution for the recurrence defined by $A(n)$, setting 
    \begin{equation*}
        Y_n = \begin{pmatrix}
    y_{n}  &
    y_{n+1}  &
    y_{n+2} &
    \hdots &
    y_{n+m-1}
    \end{pmatrix}
    \end{equation*} We get that 
    \begin{equation*}
      Y_0 \mathcal{A}_n= Y_{n} 
    \end{equation*}
    Hence  
    \begin{equation*}
     \mathcal{A}_n^{-t}\cdot \begin{pmatrix}
    y_{n}  \\
    y_{n+1}  \\
    y_{n+2} \\
    \hdots \\
    y_{n+m-1}
    \end{pmatrix}= \begin{pmatrix}
    y_{0}  \\
    y_{1}  \\
    y_{2} \\
    \hdots \\
    y_{m-1}
    \end{pmatrix}
    \end{equation*}
    Each row gives the equation
 \begin{equation*}
       \left( \left( \mathcal{A}_n^{-t}\right)_{i,1} \cdot y_n\right)\cdot\left(\sum_{j=1}^m \frac{\left( \mathcal{A}_n^{-t}\right)_{i,j}}{\left( \mathcal{A}_n^{-t}\right)_{i,1}}\cdot \frac{y_{n+j-1}}{y_n} \right)= y_i
    \end{equation*}
    Assuming $y_n$ is the "slowest" solution to the recurrence, we get that 
    \[
    \lim_{n\to \infty} \frac{y_{n+1}}{y_n}=\lambda_m
    \]
    Note that each row of $\widetilde{A}$ is a general solution to the system defined by $A^{-t}(n)$ so by Poincare-Perron theorem $\frac{\left( \mathcal{A}_n^{-t}\right)_{i,j+1}}{\left( \mathcal{A}_n^{-t}\right)_{i,1}}=\lambda_m^{-j}$ is independent of $i$, so we get that 
    \[
    \lim_{n\to \infty} \frac{\sum_{j=1}^m \frac{\left( \mathcal{A}_n^{-t}\right)_{i,j}}{\left( \mathcal{A}_n^{-t}\right)_{i,1}}\cdot \frac{y_{n+j-1}}{y_n}  }{\sum_{j=1}^m \frac{\left( \mathcal{A}_n^{-t}\right)_{i',j}}{\left( \mathcal{A}_n^{-t}\right)_{i',1}}\cdot \frac{y_{n+j-1}}{y_n}  } =1
    \]
    Hence
    \[
     \lim_{n\to \infty} 
     \frac{\left( \left( \mathcal{A}_n^{-t}\right)_{i,1} \cdot y_n\right)\cdot\left(\sum_{j=1}^m \frac{\left( \mathcal{A}_n^{-t}\right)_{i,j}}{\left( \mathcal{A}_n^{-t}\right)_{i,1}}\cdot \frac{y_{n+j-1}}{y_n} \right)}
     {\left( \left( \mathcal{A}_n^{-t}\right)_{i',1} \cdot y_n\right)\cdot\left(\sum_{j=1}^m \frac{\left( \mathcal{A}_n^{-t}\right)_{i',j}}{\left( \mathcal{A}_n^{-t}\right)_{i,1}}\cdot \frac{y_{n+j-1}}{y_n} \right)} = \frac{y_i}{y_{i'}}
    \]
    As needed.
\end{proof}

\subsection{Coboundary transform}
\label[appendix]{appendix-Coboundary-transform}
Given a recurrence relation $u_n$, we are interested in analyzing the dynamics of its solutions, which translates to analyzing the behavior of the companion matrix $\operatorname{CM}(n)$ increments, i.e., the behavior of the step matrix $M_n$ as $n$ approaches infinity.

We are interested in the ratio between solutions to the recurrence. Consequently, we look at these matrices as elements in the projective group $\operatorname{PGL}_m\left( \mathbb{Q}(n) \right)$
We will recall the definition of a coboundary equivalence:

    Two matrices $A(n),B(n)\in \operatorname{PGL}_m\left(\mathbb{Q}(n)\right)$ are said to be \textit{coboundary equivalent} if there exist a matrix $U(n)$ such that 
\begin{equation}
     A(n) \cdot U({n+1})  =   U(n) \cdot B(n) 
\end{equation}

From the equation above we see that $A(n) = U(n) \cdot B(n) \cdot U^{-1}(n+1) $ thus exhibiting a ``telescoping effect'' on the product, resulting in the equation:
\[
(\prod_{i=1}^n A(n))\cdot U(n+1)  = U(1)\cdot(\prod_{i=1}^n B(n)) 
\]
This suggests that these matrices share similar dynamics.

We denote $A(n)\sim B(n)$, to indicate that they are coboundary equivalent, and we often call the matrix $U(n)$ a \textit{coboundary transform} from $A(n)$ to $B(n)$.

\textbf{Index shift}:

Notice that the matrices $A(n)$ and $A(n+1)$ are coboundary equivalent by simply taking $U(n)=A(n)$. This implies that an index shift is a coboundary operation. Since coboundary is an equivalence, by transitivity, any integer index shift is also a coboundary operation.

\textbf{Inflation}:

Given a function $u_n$ satisfying the relation
\[
u_n = a_{1,n} u_{n-1} + a_{2,n} u_{n-2} + \ldots + a_{m,n} u_{n-m}
\]
We can define $\widetilde{u}_{n}= u_n \cdot \displaystyle \prod_{i=1}^n c_i$ for some function $c_n$.
The function $\widetilde{u}_{n}$ satisfied the relation
\[
\widetilde{u}_n = a_{1,n} c_n \widetilde{u}_{n-1} + a_{2,n} c_n c_{n-1} \widetilde{u}_{n-2} + \ldots + a_{m,n} c_n c_{n-1} \cdots c_{n-(m-1)}\widetilde{u}_{n-m}
\]

Let \(\operatorname{CM}(n)\) denote the companion form of $u_n$ and \(\widetilde{\operatorname{CM}}(n)\) denote the companion form of $\widetilde{u}_n$, then the matrix 

\[
U(n) = \begin{pmatrix}
   \displaystyle \prod_{i=1}^{m-1} c(n-i) & 0 & \dots & 0 & 0 \\
0 &\displaystyle \prod_{i=1}^{m-2} c(n-i)  & \dots & 0 & 0 \\
\vdots & \vdots & \ddots & \vdots & \vdots \\
0 & 0 & \dots & c(n-1)& 0 \\
0 & 0 & \dots & 0 &  1
\end{pmatrix}
\]

Is a coboundary transform between \(\operatorname{CM}(n)\) and  \(\widetilde{\operatorname{CM}}(n)\).
This process is called inflation and it allows one to construct a Polynomial Continued Fraction (PCF) from a recurrence given by rational function coefficients. In \citep{BlindDelta}, this is done by taking $c_n $ to be the LCM of the denominators.
In the other direction starting from a $PCF\left(a(n),b(n)\right)$ and taking $C_n = n^{-\text{deg}(a)}$, the inflation by $c_n$ is called factorial reduction \citep{Arnold_BenDavid2024}.

\subsubsection{Canonical form}
\label[appendix]{appendix-canonical-form}
We have seen that a recurrence relation with rational function coefficients can be transformed via inflation to a recurrence with polynomial coefficients. We say that a recurrence 

\[ 
u_n = a_{1,n} u_{n-1} + a_{2,n} u_{n-2} + \ldots + a_{m,n} u_{n-m} 
\]

is in canonical form if the coefficients \( a_{i,n} \) are polynomials, and for any other recurrence with polynomial coefficients \({a'}_{i,n}\) which is coboundary equivalent to it, the degree of each \( a_{i,n} \) is less than or equal to the degree of the corresponding \({a'}_{i,n}\). \cref{appendix-algs-canonicalization} explains how the minimal recurrence is found for each formula.

Lemma~\ref{lem:formula_genrating_iff_CM} asserts that in the case of second-order recurrences, any formula-generating matrix is equivalent to a companion matrix. In turn, this is equivalent to a companion matrix of a recurrence $u_n = a(n)u_{n-1}+b(n)u_{n-2}$ with polynomial coefficients through the process of inflation. We denote such recurrences by $PCF\left( a(n),b(n) \right)$ as they are associated with a polynomial continued fraction
\[
    M(n)(0)= \cfrac{b(1)}{
        a(1) + \cfrac{b(2)}{
            \begin{array}{cc} 
                a(2) \ + & \\  
                & \ddots
            \end{array}
            \begin{array}{c}
                \\ 
                + \ \cfrac{b(n)}{a(n)}
            \end{array}
        }
    } = \frac{p_n}{q_n}
\]
The canonical form of $PCF\left( n^2 + n + 1, n^4 + n^2 + 1\right)$, for example, is equal to $PCF\left(1,1\right)$, the simplest continued fraction for the Golden Ratio.
\subsubsection{Group cocycle and coboundary}
\label[appendix]{appendex- group cocycle}
We now describe the mathematical context that motivates our definition of coboundary equivalence.

Let \(\Gamma\) be a group acting by automorphisms on a group \(G\) via the map:

\[
\varphi\colon \Gamma \to \operatorname{Aut}(G)
\]

A \(\Gamma\)-cocycle with respect to \(\varphi\) is a map \(\mathcal{M}\colon \Gamma \to G\) satisfying the cocycle condition:
\begin{equation}
\label{eq:cocycle_condition}
    \mathcal{M}(\gamma_1\gamma_2) = \mathcal{M}(\gamma_1)\cdot \varphi_{\gamma_1}(\mathcal{M}(\gamma_2)) 
\end{equation}
We declare two cocycles, \(\mathcal{M}\) and \(\mathcal{M}'\), to be coboundary equivalent if there exists an element \(g \in G\) such that:
\begin{equation}
\label{def:grou_cohomology_cbd_equiv}
g\cdot \mathcal{M}(\gamma)=\mathcal{M'}(\gamma)\cdot\varphi_\gamma(g)
\end{equation}
For further details, we refer the interested reader to Chapter 5 of \citep{serre1979galois}.

Here, we focus on the case where \(\Gamma = \mathbb{Z}\) and \(G = \operatorname{PGL}_m\left( \mathbb{Q}(n) \right)\). 

For any $k\in \Gamma$, and \(A(n) \in G\), the action map is defined by \(\varphi_m(A(n)) = A(n+k)\).
Since \(\mathbb{Z}\) is generated by a single element, any cocycle \(\mathcal{M}\colon \Gamma \to G\) is determined by the value \(\mathcal{M}_1(n) \in G\). 
For $k>0$ the cocycle condition implies the 
\[
\mathcal{M}_{k}(n) = \prod_{i=0}^{k-1} \mathcal{M}_1(n+i)
\]
Moreover since $ \mathcal{M}_{0}(n)=\operatorname{Id}_m $, have that $ \mathcal{M}_{k}(n) = \left(\mathcal{M}_{-k}(n-k)\right)^{-1}$

From the definition of the coboundary equivalence in the context of group cocycles, we get that two \(\Gamma\)-cocycles, \(\mathcal{M}\) and \(\mathcal{M}'\), are coboundary equivalent if there exists \(U(n) \in G\) such that:
\[
U(n) \cdot \mathcal{M}_1(n) = \mathcal{M}'_1(n) \cdot U(n+1)
\]
When regarding a matrix \(A(n) \in \operatorname{PGL}_m\left(\mathbb{Q}(n)\right)\) as the matrix $\mathcal{M}_1(n)$ generating the cocycle, this equivalence is precisely the definition given in Definition~\eqref{def:cbdry_rational_matix}.

\subsubsection{Cocyles in dynamical systems}
\label[appendix]{appendex- dymanic cocycle}

Let \( T \) be a homeomorphism \( T\colon X \to X \) of some topological space. In dynamical systems, one studies the evolution of the system under the repeated application of the transformation \( T \).

A function 
\[
\mathcal{A}\colon \mathbb{Z} \times X \to \operatorname{GL}_m(\mathbb{R})
\]
satisfying
\[
\mathcal{A}(k_1+k_2, x) = \mathcal{A}(k_1, x) \mathcal{A}(k_2, T^{k_1}(x))
\]
is called a continuous linear cocycle \citep{bochi2024complete}.

Also in this context, a single matrix $A(n)\in \operatorname{GL}_m\left(\mathbb{Q}(n) \right)$ defines a cocycle. This is done by taking \( X = \mathbb{R} \), and constructing a cocycle by defining
\[
\mathcal{A}(k, x) = \prod_{i=0}^{k-1} A(x+i)
\]
Typically, it is assumed that the topological space \( X \) is a compact space or a probability measure space. \citep{DuartePedro2016LEoL}
Our definition of coboundary equivalence~\eqref{def:cbdry_rational_matix} is not only mathematically natural but also captures the essence of our dynamic framework. The vast data collected on related formulas suggest that this notion is effective for equating formulas, as it preserves the measure of irrationality.

\subsection{Fold transform}
\label[appendix]{appendix-maths-fold}
Consider the matrix \( A(n) \) as encoding a process defined by incrementally multiplying the matrix. We might wish to associate a matrix with an accelerated process, one that goes ``k steps at a time.'' That is, a matrix \(A_{k\text{-fold}}(n) \) such that taking one step in \( A_{k\text{-fold}}(n) \) is equivalent to taking \( k \)-steps in \( A \). This requirement can be expressed as:
\[
\prod_{i=1}^{nk} A(i) = \prod_{i=1}^{n} A_{k\text{-fold}}(i)
\]
And the k-fold matrix is given by the product:
\[
A_{k\text{-fold}}(n) \coloneqq \prod_{j=1}^{k} A\left(k(n-1)+j\right)
\]
When the matrix \( A(n) \) is a trajectory matrix within CMF (as described in Appendix ~\ref{appendix-section-trj-mat}
), the \( k \)-folding of \( A(n) \) corresponds to the trajectory matrix associated with a direction that is \( k \)-times the original direction.

\subsubsection{An example of folding per convergence rate}
\label[appendix]{appendix-folding-example}

The ratio of \(r\) (\cref{def:convergence_rate}) between formulas, when they exist and are non-zero, hints that one may be a transformation of a subsequence of the other. To see why, consider two series; one with summand $s(n)$ and the other with summand $s(3n-2) + s(3n-1) + s(3n)$.
These series have the same limit, as the latter produces a subseries of the former, but its convergence rate may be 3 times higher. Evidently, their summands do not appear directly equivalent. Our algorithm thus applies \textit{folding} (Appendix \ref{appendix-maths-fold}) to equate the \(r\) values of each pair of formulas that are candidates for equivalence. following is an example of two such series for $\pi$ (120 and 121 from \cref{tab:formulas-not-yet-unified}):
\begin{equation}\label{eq:unfolded_pi_sum}
   \sum_{n=0}^{\infty} a_n\coloneqq\sum_{n=0}^{\infty} (-1)^{n} 4^{- n} (\frac{1}{4 n + 3} + \frac{2}{4 n + 2} + \frac{2}{4 n + 1}) = \pi
\end{equation}
featured in \citep{0807.0872,1603.08540}, with a convergence rate of $1.39$, and
\begin{equation}\label{eq:folded_pi_sum}\sum_{k=0}^{\infty} b_k \coloneqq \sum_{k=0}^{\infty} 16^{- k} (- \frac{1}{4 (8 k + 7)} - \frac{1}{2 (8 k + 6)} -  \frac{1}{2 (8 k + 5)} + \frac{1}{8 k + 3} + \frac{2}{8 k + 2} + \frac{2}{8 k + 1}) = \pi
\end{equation}
featured in \citep{1906.09629}, with a convergence rate of $2.77$, are not directly coboundary equivalent. However, one can examine that when summing over \cref{eq:unfolded_pi_sum} in pairs, meaning:
$$\sum_{n=0}^{\infty} a_n = \sum_{k=0}^{\infty} a_{2k}+a_{2k+1}$$
it is easily verified that $b_k = a_{2k}+a_{2k+1}$. Thus, these formulas ought to be considered equivalent, and indeed were matched by our algorithm (\cref{appendix-algs-formula-matching}) by first folding the sum in \cref{eq:unfolded_pi_sum} by $\frac{2.77}{1.39} = 2$.

\section{Coboundary matrix properties}
\label[appendix]{appendix-coboundary-matrix-properties}

\subsection{A necessary condition on the coboundary matrix}
\label[appendix]{appendix-coboundary-necessary-condition}
Below we include a proof of the necessary condition a coboundary matrix must obey that was leveraged to create UMAPS \cref{appendix-algs-coboundary-algorithm}. We start with the case of $2 \times 2$ matrices (second-order recurrences), then generalize to any order.

\textbf{Lemma \ref{lemma-necessary-condition-for-coboundary-matrix}.}
    \textit{(A necessary condition on the coboundary equivalence matrix.) Let $L_A = \displaystyle \lim_{n\to \infty} PCF\left( a(n),b(n) \right)$ and $L_B = \lim_{n\to \infty} PCF\left( c(n),d(n) \right)$ be converging PCFs with associated companion matrices $A(n),B(n)\in \operatorname{PGL}_2\left(\mathbb{Q}(n)\right)$. If $A(n)$ is coboundary to $B(n)$, then $L_A$ and $L_B$ are related through a rational Möbius transformation, moreover, if $U(n)$ is the coboundary matrix then $L_A=U(1)(L_B)$ ($U(1)$ applied to $L_B$ as a Möbius transformation).}

\begin{proof}
    Let $\mathcal{A}_n$ and $\mathcal{B}_n$ be the step matrices of the PCF recurrence.
    \[
    \mathcal{A}_n =\prod_{i=1}^n A(i)= 
\begin{pmatrix}
    p_{n-1} & p_n \\
    q_{n-1} & q_n
\end{pmatrix},
\quad  \mathcal{B}_n=\prod_{i=1}^n B(i)= 
\begin{pmatrix}
    s_{n-1} & s_n \\
    t_{n-1} & t_n
\end{pmatrix}
    \]
   Let \[
   U(n+1) = \begin{pmatrix}
    \alpha(n) & \beta(n) \\
    \gamma(n) & \delta(n)
\end{pmatrix}
\] with $\alpha,\beta,\gamma,\delta$ polynomials.
Since $A(n)$ and $B(n)$ are coboundary equivalent
\[
\mathcal{A}_n \cdot U(n+1) = U(1) \cdot \mathcal{B}_n
\]
This implies equality when applying a factional linear transformation to $0$.
\begin{equation}\label{eq:apendix_proof}
\left(\begin{pmatrix}
    p_{n-1} & p_n \\
    q_{n-1} & q_n
\end{pmatrix} \cdot U(n+1)\right)(0) =\left( U(1) \cdot\begin{pmatrix}
    s_{n-1} & s_n \\
    t_{n-1} & t_n
\end{pmatrix}\right)(0)
\end{equation}
Taking the limit, The left hand side of equation~\eqref{eq:apendix_proof} above yields
\[
\displaystyle \lim_{n\to \infty}  \frac{p_{n-1}\beta(n)+p_n\delta(n)}{q_{n-1}\beta(n)+q_n\delta(n)} = \lim_{n\to \infty} \frac{p_{n-1}}{q_{n-1}}\cdot \left(\frac{\frac{\beta(n)}{\delta(n)}+\frac{p_n}{p_{n-1}}}{\frac{\beta(n)}{\delta(n)}+\frac{q_n}{q_{n-1}}}\right)
\]
While on the right-hand side:
$$\lim_{n\to\infty}\left( U(1) \cdot\begin{pmatrix}
    s_{n-1} & s_n \\
    t_{n-1} & t_n
\end{pmatrix}\right)(0) =  U(1)\left(\lim_{n\to\infty}\begin{pmatrix}
    s_{n-1} & s_n \\
    t_{n-1} & t_n
\end{pmatrix}(0)\right) =  U(1)(L_B)$$
The matrix $A(n)$ is a companion matrix for a second-order linear recurrence, any solution for the recurrence is composed of two prime solutions $x_n,y_n$, where the growth of $x_n$ is dominant over the growth of $y_n$. Since $\frac{p_n}{q_n}$ converge, we know that $p_n$ and $q_n$ both have an $x_n$ component to them, making $\frac{p_n}{p_{n-1}}$ and $\frac{q_n}{q_{n-1}}$ both grow asymptotically the same, this renders $\displaystyle \lim_{n\to \infty} (\frac{\frac{\beta(n)}{\delta(n)}+\frac{p_n}{p_{n-1}}}{\frac{\beta(n)}{\delta(n)}+\frac{q_n}{q_{n-1}}})=1$ and we may conclude that 

\[
\lim_{n\to \infty}  \frac{p_{n-1}\beta(n)+p_n\delta(n)}{q_{n-1}\beta(n)+q_n\delta(n)}
= \lim_{n\to \infty} \frac{p_{n-1}}{q_{n-1}}\cdot \left(\frac{\frac{\beta(n)}{\delta(n)}+\frac{p_n}{p_{n-1}}}{\frac{\beta(n)}{\delta(n)}+\frac{q_n}{q_{n-1}}}\right) = L_A\cdot 1
 \]
Hence $L_A = U_1(L_B)$ as needed.
\end{proof}

Let us generalize Lemma \ref{lemma-necessary-condition-for-coboundary-matrix} for matrices of higher dimensions.

\begin{lemma}
    \label{lemma-generalized-necessary-condition-for-coboundary-matrix}
    (Generalized necessary condition on the coboundary equivalence matrix.)
    Given two coboundary equivalent companion matrices $A(n),B(n)\in \operatorname{PGL}_m\left(\mathbb{Q}(n)\right)\ , \ A(n)U(n+1) = U(n)B(n)$ with  $m\geq 2$ and the step matrices:
    \begin{equation*}
    \begin{minipage}[t]{0.4\textwidth}
    \raggedright
    $
        \mathcal{A}_n =\prod_{i=1}^n A(i)= 
    \begin{pmatrix}
    p_{1,n-m} & \dots &  p_{1,n-1} & p_{1,n} \\
    p_{2,n-m} & \dots &  p_{2,n-1} & p_{2,n} \\
    p_{3,n-m} & \dots &  p_{3,n-1} & p_{3,n} \\
    \vdots & \ddots & \vdots & \vdots \\
    p_{m,n-m} & \dots & p_{m,n-1} & p_{m,n}
    \end{pmatrix}$,
    \end{minipage}
    \hfill
    \begin{minipage}[t]{0.4\textwidth}
    \raggedright
    $\mathcal{B}_n=\prod_{i=1}^n B(i)= 
    \begin{pmatrix}
    q_{1,n-m}  & \dots &  q_{1,n-1} & q_{1,n} \\
    q_{2,n-m} & \dots &  q_{2,n-1} & q_{2,n} \\
    q_{3,n-m}  &\dots &  q_{3,n-1} & q_{3,n} \\
    \vdots & \ddots & \vdots & \vdots \\
    q_{m,n-m} & \dots & q_{m,n-1} & q_{m,n}
    \end{pmatrix}$
    \end{minipage}
    \end{equation*}
s.t the right most columns of $\mathcal{A}_n ,\mathcal{B}_n $ both converge projectively:
$$L_{A},L_{B}\in \mathbb{P}^{n-1}\mathbb{R}\ ,\ L_A\coloneqq \lim_{n\to\infty}\begin{pmatrix}
    p_{1,n}\\
    \vdots\\
    p_{m,n}
\end{pmatrix} \ , \ L_B\coloneqq \lim_{n\to\infty}\begin{pmatrix}
    q_{1,n}\\
    \vdots\\
    q_{m,n}
\end{pmatrix}$$
and all entries of $L_A,L_B$ are non-zero, we have:
$$L_A = U(1)L_B$$
\end{lemma}
\begin{proof}
Let 
$$U(n+1) = \begin{pmatrix}
u_{1,1}(n)  & \dots &  u_{1,m-1} (n)& u_{1,m}(n) \\
u_{2,1} (n)& \dots &  u_{2,m-1} (n)& u_{2,m}(n) \\
u_{3,1}(n)  &\dots &  u_{3,m-1}(n) & u_{3,m}(n) \\
\vdots & \ddots & \vdots & \vdots \\
u_{m,1}(n) & \dots & u_{m,m-1}(n) & u_{m,m}(n)
\end{pmatrix}$$
With $u_{i,j}(n)$ rational functions. 
Given $A(n)U(n+1) = U(n)B(n)$, we have $\mathcal{A}_nU(n+1) = U(1)\mathcal{B}_n$. This implies the following equality:
    \begin{equation}\label{eq:high rank coboundary proof}
        \mathcal{A}_nU(n+1)\begin{pmatrix}
        0\\
        0\\
        \vdots\\
        1
    \end{pmatrix} = U(1)\mathcal{B}_n\begin{pmatrix}
        0\\
        0\\
        \vdots\\
        1
    \end{pmatrix} \implies\mathcal{A}_n \begin{pmatrix}
        u_{1,m}(n)\\
        u_{2,m}(n)\\
        \vdots\\
        u_{m,m}(n)
    \end{pmatrix}  = U(1)\begin{pmatrix}
        q_{1,n}\\
        q_{2,n}\\
        \vdots\\
        q_{m,n}
    \end{pmatrix}
    \end{equation}
    Taking the projective limit on both sides of this equation yields on the right:
$$\lim_{n\to\infty}U(1)\begin{pmatrix}
        q_{1,n}\\
        q_{2,n}\\
        \vdots\\
        q_{m,n}
    \end{pmatrix} = U(1)L_B$$
On the left, we first need to claim a few things regarding the growth rate of solutions to the recurrence represented by $A(n)$. Using Poincare Perron asymptotics, we can claim that there are $m$ different canonical solutions $x^{(i)}_n$, with different growth rates. Thus any solution is a linear combination of these canonical solutions. For a solution $r_n = \sum a_ix^{(i)}_n$, we shall call the fastest growing canonical solution $x^{(i)}_n$  s.t $a_i\neq 0 $ the \textit{dominating solution}. The projective convergence of $\begin{pmatrix}
    p_{1,n}\\
    \vdots\\
    p_{m,n}
\end{pmatrix}$ to nonzero limits, implies that all $p_{i,n}$ have the same dominating solution. Otherwise, if the dominating solution of $p_{j,n}$ grows faster than the dominating solution of $p_{k,n}$, then $\lim_{n\to\infty}\frac{p_{k,n}}{p_{j,n}} = 0$, in contradiction to our assumption of non-zero entries in $L_A$. \\
Finally, let us return to the left-hand side of \eqref{eq:high rank coboundary proof}: 
$$\mathcal{A}_n \begin{pmatrix}
        u_{1,m}(n)\\
        u_{2,m}(n)\\
        \vdots\\
        u_{m,m}(n)
    \end{pmatrix} = \begin{pmatrix}
        \sum_{i=1}^m p_{1,n-i+1}u_{i,m}(n)\\
        \sum_{i=1}^m p_{2,n-i+1}u_{i,m}(n)\\
        \vdots\\
        \sum_{i=1}^m p_{m,n-i+1}u_{i,m}(n)
    \end{pmatrix}$$
As we are interested in the projective limit, let us consider the limit of the ratio of the $k$th and $j$th elements in the left-hand side:
$$\frac{\sum_{i=1}^m p_{k,n-i+1}u_{i,m}(n)}{\sum_{i=1}^m p_{j,n-i+1}u_{i,m}(n)} = \frac{p_{k,n}}{p_{j,n}}\cdot\frac{\sum_{i=1}^m \frac{p_{k,n-i+1}}{p_{k,n}}u_{i,m}(n)}{\sum_{i=1}^m \frac{p_{j,n-i+1}}{p_{j,n}}u_{i,m}(n)}$$
Now, given that both $p_{k,n}, p_{j,n}$ have the same dominating solution, we have that $\frac{p_{k,n-i+1}}{p_{k,n}},\frac{p_{j,n-i+1}}{p_{j,n}}$ grow asymptotically the same in $n$, making:
$$\lim_{n\to\infty}\frac{\sum_{i=1}^m \frac{p_{k,n-i+1}}{p_{k,n}}u_{i,m}(n)}{\sum_{i=1}^m \frac{p_{j,n-i+1}}{p_{j,n}}u_{i,m}(n)} = 1\implies\lim_{n\to\infty}\frac{\sum_{i=1}^m p_{k,n-i+1}u_{i,m}(n)}{\sum_{i=1}^m p_{j,n-i+1}u_{i,m}(n)} = \lim_{n\to\infty}\frac{p_{k,n}}{p_{j,n}}$$
Which implies, by definition of projective convergence:
$$\lim_{n\to\infty}\mathcal{A}_n \begin{pmatrix}
        u_{1,m}(n)\\
        u_{2,m}(n)\\
        \vdots\\
        u_{m,m}(n)
    \end{pmatrix} = L_A$$
Which finally gives 
$$L_A = U(1)L_B$$
\end{proof}

\subsection{Uniqueness}
\begin{lemma}
    \label{lemma-coboundary-matrix-uniqueness}
    (Projective uniqueness of coboundary matrices.)
    Let $A(n),B(n) \in \operatorname{PGL}_m\left(\mathbb{Q}(n)\right)$ be two rational matrices as in \cref{lemma-generalized-necessary-condition-for-coboundary-matrix}.
    Suppose further that the projective limit $L_B$ of $B(n)$ has no algebraic relations over the rationals. Then, if $A(n),B(n)$ are coboundary equivalent, there exists a unique coboundary matrix realizing this equivalence.
\end{lemma}

\begin{proof}
    Let $U_1(n), U_2(n)$ be two coboundary matrices between $A(n),B(n)$. We therefore have $\Rightarrow A(n) = U_1(n) \cdot B(n) \cdot U_1(n+1)^{-1} = U_2(n) \cdot B(n) \cdot U_2(n+1)^{-1}$. So $B(n)$ is coboundary to itself with $U_3(n) = U_1(n)^{-1}U_2(n)$. 
    We will show that $U_3(n)$ is the identity in $\operatorname{PGL}_m\left(\mathbb{Q}(n)\right)$, $I_m$, i.e. that there are no non-trivial coboundary matrices between $B(n)$ and itself.
    By construction 
    \[B(k)=U_3(k) \cdot B(k) \cdot U_3(k+1)^{-1}\]
    Therefore if $U_3(1)=I_m$ then $U_3(k)=I_m$ for any $k\in \mathbb{N}$.
    Assuming $L_B$ has no algebraic relations over the field of rational numbers, it follows that no non-trivial matrix $U$ with rational coefficients satisfies 
    \[
    U\cdot L_B=L_B
    \]
    By Lemma \ref{lemma-generalized-necessary-condition-for-coboundary-matrix} we have that
    $L_B = U_3(1)L_B $, hence $ U_3(1) = I_m$ as desired.
\end{proof}

\section{The Conservative Matrix Field (CMF)}
\label[appendix]{appendix-cmf}
The CMF defined in \citep{doi:10.1073/pnas.2321440121} has polynomial coefficients, which is not enough for our purposes since the $\pi$-CMF (\cref{def:the_pi_cmf}) has rational function as coefficients. We redefine the CMF in a way that allows one to insert rational functions as coefficients.
\begin{definition}
    \label{def:CMF}
    A $d$-dimensional CMF of rank $m$ is defined by a collection of $d$-matrices $M_{\mathbf{x}_1},\ldots,M_{\mathbf{x}_d}$ in $\operatorname{PGL}_m\left( \mathbb{Q}(x_1,\ldots,x_d)\right)$ satisfying the conserving property, that is, for any pair $i\ne j$
    \[
    \begin{aligned}
    M_{\mathbf{x}_i}(x_1,\ldots,x_i,\ldots,x_j,\ldots,x_d)  &M_{\mathbf{x}_j} (x_1,\ldots,x_i+1,\ldots,x_j,\ldots,x_d)
    \\&=\\
    M_{\mathbf{x}_j}
    (x_1,\ldots,x_i,\ldots,x_j,\ldots,x_d)
    &M_{\mathbf{x}_i}(x_1,\ldots,x_i,\ldots,x_j+1,\ldots,x_d)
    \end{aligned}
    \]
\end{definition}
Envision a $d$-dimensional lattice where each edge has a displacement function representing the ``work" of moving between vertices. The conserving property is that: the work is \textit{path-independent}.
%
\subsection{Trajectory matrices in a CMF}
\label[appendix]{appendix-section-trj-mat}
 Let $\mathcal{M}$, be a $d$-dimensional CMF of rank $m$. Given a point $x\in \mathbb{Q}^d$ and a direction $v\in \mathbb{Z}^d$ the evaluation map $v\mapsto \mathcal{M}_v(x)$ describes the displacement from point $x$ to point $x+v$.
    By the conserving property, the total work for a displacement along a broken path, say, first from $x$ to $x+v$ and then from $x+v$ to $(x+v)+w$, is equal to the total displacement from $x$ to $x+v+w$, in terms of the evaluation map this is translated to:
    \[
    \mathcal{M}_{v+w}(x) =\mathcal{M}_{v}(x) \mathcal{M}_{w}(x+v)  
    \]
 
 We can construct a general matrix representing the work going from the point $x+(n-1)v$ to the point $x+nv$, this matrix is in $\operatorname{PGL}_2\left(\mathbb{Q}(n)\right)$ and is given by
 \[
 T_{x,v}(n)=\mathcal{M}_v(x+(n-1)v)
 \]
We call this matrix the trajectory matrix associated with the point $x$ in direction $v$.

Note that by the conserving property  $\text{Id} =M_{-v+v}(x)=M_{-v}(x)M_{v}(x-v)$, this provides us with the identity
\[
M_{-v}(x)=M_{v}^{-1}(x-v)
\]

Taking a point $x'$ in the lattice $ x+\mathbb{Z}^d$  with the same direction $v$ yields the trajectory matrix 
\[
T_{x',v}(n)=\mathcal{M}_v(x'+(n-1)v)
\]
Since $x'$ is in the same lattice, the difference $w =x'-x$ is in $\mathbb{Z}^d$. The direction $w$ represents the direction from $x$ to $x'$. 
By the conserving property taking $U(n)=T_{x,w}(n)$, the trajectory matrix from $x$ in the direction $w$ is a coboundary transform between $T_{x,v}(n)$ and $T_{x',v}(n)$.

In terms of the evaluation functions we see explicitly :
\[
\mathcal{M}_{v+w}(x+(n-1)v) =\mathcal{M}_{v+w}(x+(n-1)v) \mathcal{M}_{w}(x+(n-1)v+v) =  T_{x,v}(n)T_{x,w}(n+1)
\]
and
\[
\mathcal{M}_{v+w}(x+(n-1)v) =\mathcal{M}_{w}(x+(n-1)v) \mathcal{M}_{v}(x+(n-1)v+(x'-x)) =  T_{x,w}(n) T_{x',v}(n)
\]

\cref{append-algs-cmf-trajectories} describes how to construct a trajectory matrix given a CMF and a trajectory.

\subsection{The \texorpdfstring{$\pi$-CMF}{pi-CMF}}
\label[appendix]{appendix Hypergeomtric-CMF}
The following three matrices describe a 3D rank 2 CMF (same as \cref{def:the_pi_cmf_in_main_text}). 
\begin{equation}
    \label{def:the_pi_cmf}
    \begin{aligned}
M_{\mathbf{x}} &=
\begin{pmatrix}
    1 & y \\
    \frac{1}{x} & \frac{2x + y - 2z + 2}{x}
\end{pmatrix}
\\
M_{\mathbf{y}} &=
\begin{pmatrix}
    1 & x \\
    \frac{1}{y} & \frac{x + 2y - 2z + 2}{y}
\end{pmatrix}
\\
M_{\mathbf{z}} &=
\begin{pmatrix}
    \frac{z(-x - y + z)}{(y - z)(x - z)} & \frac{zxy}{(y - z)(x - z)} \\
    \frac{z}{(y - z)(x - z)} & \frac{-z^2}{(y - z)(x - z)}
\end{pmatrix}
\end{aligned}
\end{equation}
The cocycle $\mathcal{M}\colon \mathbb{Z}^3\to \operatorname{PGL}_2\left(\mathbb{Q}(n)\right)$ is defined on the generators of $\mathbb{Z}^3$ as
\[
\mathcal{M}_{e_1} = M_{\mathbf{x}}, \quad \mathcal{M}_{e_2} = M_{\mathbf{y}}, \quad \mathcal{M}_{e_3} = M_{\mathbf{z}}
\]

\subsubsection{Fundamental properties of the \texorpdfstring{$\pi$-CMF}{pi-CMF}}
\label{fundamental properties of the pi cmf}

Let ${}_2F_1(a_1,a_2,a_3;z)$ represent the Gauss hypergeometric function
\[
{}_2F_1(a_1,a_2,a_3;z) = \sum_{n=0}^\infty \frac{(a)_n (b)_n}{(c)_n} \frac{z^n}{n!} 
\]
Denoting the Euler operator by $\Theta=z\cdot \frac{d}{dz}$, the Gauss hypergeometric function has well-known contiguous relations that are encoded by the $\mathcal{M}_{e_i}$ matrices as detailed in \cite{weinbaum2025conservativematrixfieldscontinuous}. In particular, we have the following property:
\begin{equation*}
    [{}_2F_1(a_1,a_2,a_3;\frac{1}{2}),\Theta {}_2F_1(a_1,a_2,a_3;\frac{1}{2})] \cdot \mathcal{M}_{e_1} =  [{}_2F_1(a_1+1,a_2,a_3;\frac{1}{2}),\Theta {}_2F_1(a_1+1,a_2,a_3;\frac{1}{2})] 
\end{equation*}
\begin{equation*}
    [{}_2F_1(a_1,a_2,a_3;\frac{1}{2}),\Theta {}_2F_1(a_1,a_2,a_3;\frac{1}{2})] \cdot \mathcal{M}_{e_2} =  [{}_2F_1(a_1,a_2+1,a_3;\frac{1}{2}),\Theta {}_2F_1(a_1,a_2+1,a_3;\frac{1}{2})] 
\end{equation*}
\begin{equation*}
    [{}_2F_1(a_1,a_2,a_3;\frac{1}{2}),\Theta {}_2F_1(a_1,a_2,a_3;\frac{1}{2})] \cdot \mathcal{M}_{e_3} =  [{}_2F_1(a_1,a_2,a_3+1;\frac{1}{2}),\Theta {}_2F_1(a_1,a_2,a_3+1;\frac{1}{2})] 
\end{equation*}
\subsubsection{Example of a formula arising as a trajectory in the CMF}

Let $x=(\frac{1}{2},\frac{-1}{2},\frac{3}{2})$ be an point in space defining the lattice $x+\mathbb{Z}^3$.
Directions between points in this lattice correspond to a vast collection of formulas for $\pi$ as seen in Fig.~\ref{fig:cmf-unification}. 

For completeness of the exposition, we take the simplest direction $e_3=(0,0,1)$ and show how the famous Euler formula $PCF\left(1,n(n+1)\right)=\frac{2}{\pi-2}$ sits as a trajectory matrix over the point $x$ in the direction $e_3$.

\[\operatorname{CM}(n) = \begin{pmatrix}
0 &n(n+1)\\
1 & 1
\end{pmatrix}
 \]
is the companion form of Euler's PCF.

The trajectory matrix is equal to
\[
T{x,e_3}(n)= \mathcal{M}_{e_3}(\frac{1}{2},\frac{-1}{2},\frac{3}{2}+(n-1) ) =
\begin{pmatrix}
    \frac{(2n+1)^2}{4n(n+1)} & \frac{-2n-1}{8n(n+1)} \\
    \frac{2n+1}{2n(n+1)} & -\frac{(2n+1)^2}{4n(n+1)}
\end{pmatrix}
\]
Following the process described in Lemma~\ref{lem:formula_genrating_iff_CM}, we define 
\[
U(n) = 
\begin{pmatrix}
  \frac{2n+1}{2n(n+1)} & - \frac{(2n+1)^2}{4n(n+1)}\\
0 & 1
\end{pmatrix}
\]
and get that
\[
E(n)=U(n)T(n)U^{-1}({n+1}) = 
\begin{pmatrix}
0 & \frac{n^2+2n+\frac{3}{4}}{n^2+3n+2}\\
1 & \frac{n+\frac{3}{2}}{n^2+3n+2}
\end{pmatrix} \cdot \frac{2n^2+5n+2}{n(2n+3)}
\]
Is in companion form. We let $I(n)$ be the inflation by $c_n=n^2+3n+2$  (see ~\ref{appendix-Coboundary-transform}) and we get the relation
\[
I(n) \cdot E(N)
= \operatorname{CM}(n)\cdot I({n+1}) 
\]
Concluding $T(n)\sim E(n)\sim \operatorname{CM}(n) $, which states the equivalence of the trajectory with Euler's PCF.

\subsection{The CMF proves the convergence formula}
\label{Proving convergance in CMF}
Recall that the $\pi$-CMF records the data of the contiguous relations of the Gauss hypergeometric function as shown in \ref{fundamental properties of the pi cmf}.

For a trajectory $v\in \mathbb{Z}^3$, we denote $$F_n  = {}_2F_1(\frac{1}{2}+v_1\cdot n,\frac{1}{2}+v_2\cdot n,\frac{1}{2}+v_3\cdot n;\frac{1}{2})$$
$$G_n= \Theta{}_2F_1(\frac{1}{2}+v_1\cdot n,\frac{1}{2}+v_2\cdot n,\frac{1}{2}+v_3\cdot n;\frac{1}{2})$$
Under this notation, we get that the trajectory matrix $T_{x,v}(n)$ at the initial point $x=(\frac{1}{2}, \frac{1}{2}, \frac{1}{2})$ in direction $v$ is the matrix that represents a forward shift in the parameter $n$, i.e
$$T_{x,v}(n) : [F_n,G_n]\mapsto  [F_{n+1},G_{n+1}]$$
Following the procedure in Lemma~\ref{lem:formula_genrating_iff_CM}, we get that $C_{x,v}(n)$, the companion form of $T_{x,v}(n)$, is the matrix that encodes the step going from $[F_n,F_{n+1}]$ to $[F_{n+1},F_{n+2}]$
\[
\begin{tikzcd}
{[F_n,G_n]} \arrow[r,"T_{x,v}(n)"] & {[F_{n+1},G_{n+1}]} \arrow[d,"U^{-1}(n+1)"] \\
{[F_n,F_{n+1}]} \arrow[u,"U(n)"] \arrow[r,"C_{x,v}(n)"] & {[F_{n+1},F_{n+2}]}
\end{tikzcd}
\] 
In the case $C_{x,v}(n) $ is Poincaré-Perron type (see \cref{definition-Poincare-Perron-CF}), it follows by Lemma~\ref{lemma-limit-value} that the projective limit $$ \lim_{n\to \infty} \prod_{i=1}^n C_{x,v}(i)^{-t}=[1,\frac{F_2}{F_1}],$$ 
where $F_n$ is the solution growing like $|\lambda_2|^n$ where $|\lambda_1|>|\lambda_2 |$ the characteristics roots of the recurrence equation.

Notice that $$ (\prod_{i=1}^n C_{x,v}(i)^{-1})(\prod_{i=1}^n C_{x,v}(i))=Id$$ Hence, the first column of $\prod_{i=1}^n C_{x,v}(i)^{-t}$ is perpendicular to the second column of $\prod_{i=1}^n C_{x,v}(i)$.
So in the limit, the second column is projectively equal to $[-\frac{F_2}{F_1},1]$, hence the limit is equal to $-\frac{F_2}{F_1}$.

In conclusion, given a trajectory matrix $T(n)$ with companion form matrix $C(n)$ that is Poincaré-Perron type, we get that the limit of $C(n)$ is $-\frac{F_2}{F_1}$ and consequently the limit of the trajectory matrix $T(n)$ is $U(1).\left(-\frac{F_2}{F_1}\right)$ for a solution $F_n$ of the recurrence with the smaller $\lambda_2$ growth rate.

\subsection{Proof for the trajectory (1,1,2)}
Taking the trajectory $v=(1,1,2)$ we get $F_n= {}_2F_1(\frac{1}{2}+n,\frac{1}{2}+ n,\frac{1}{2}+ 2n;\frac{1}{2})$.
The trajectory matrix and companion form are
\[
T(n)= \begin{pmatrix}
\dfrac{(4n + 5)(4n + 7)}{2(n + 1)^2} &
-\dfrac{(4n + 5)(4n + 7)^2}{4(n + 1)^2} \\[1.2em]
-\dfrac{(16n + 28)(4n + 5)^2}{4(n + 1)^2(2n + 3)^2} &
\dfrac{(4n + 5)(4n + 7)(80n + 104)}{(n + 1)^2(32n + 48)}
\end{pmatrix}\]
\[C(n) =\begin{pmatrix}
0 &
-\dfrac{(4n + 11)(4n + 7)(4n + 9)^2}{(n + 2)^2(2n + 5)^2} \\[1.2em]
1 &
\dfrac{(4n + 7)(4n + 9)(8n + 22)(6n^2 + 21n + 17)}{(n + 2)^2(2n + 5)^2(4n + 5)}
\end{pmatrix}
\]
With the limit matrix $C(\infty)$ being Poincaré-Perron type
\[
C(\infty)=
\begin{pmatrix}
0 & -64 \\[0.5em]
1 & 48
\end{pmatrix}
\]
Having the characteristic roots (eigenvalues) $\{24 - 16\sqrt(2),24 + 16\sqrt(2)\}$.
All we need to show is that $F_n$ does not grow exponentially like $24 + 16\sqrt(2)\approx 46.627$.

By the Euler identity for the hypergeometric function
\[
{}_2F_1(a,b,c;x)
=\frac{\Gamma(c)}{\Gamma(a)\Gamma(c-a)}
\int_0^1 t^{\,a-1}(1-t)^{c-a-1}(1-xt)^{-b}\,dt.
\]
We get that
\[
F(n)
=\frac{\Gamma\!\left(\tfrac12+2n\right)}{\Gamma\!\left(\tfrac12+n\right)\Gamma(n)}
\int_0^1 t^{\,n-\tfrac12}(1-t)^{\,n-1}\!\left(1-\tfrac{t}{2}\right)^{-(n+\tfrac12)}dt.
\]
Setting
\[
f(t)=\log\!\left(\frac{t(1-t)}{1-\tfrac{t}{2}}\right), \qquad 
\psi(t)=t^{-1/2}(1-t)^{-1}\!\left(1-\tfrac{t}{2}\right)^{-1/2},
\]
so that
\[
{}_2F_1=\frac{\Gamma\!\left(\tfrac12+2n\right)}{\Gamma\!\left(\tfrac12+n\right)\Gamma(n)}
\int_0^1 e^{n f(t)}\,\psi(t)\,dt.
\]

Using Stirling’s formula
\[
\Gamma(z)\sim \sqrt{2\pi}\,z^{\,z-\frac12}e^{-z}, \qquad (z\to\infty),
\]
we find
\[
\frac{\Gamma\!\left(\tfrac12+2n\right)}{\Gamma\!\left(\tfrac12+n\right)\Gamma(n)}
\sim 
\frac{\sqrt{2\pi}\,(2n)^{2n}e^{-2n}}
{\bigl(\sqrt{2\pi}\,n^{n}e^{-n}\bigr)\bigl(\sqrt{2\pi}\,n^{\,n-\frac12}e^{-n}\bigr)}
=\frac{4^{\,n}\sqrt{n}}{\sqrt{2\pi}}\bigl(1+O(1/n)\bigr).
\]
Hence, the Gamma prefactor contributes an exponential growth factor $4^{n}$.

The function $f(t)$ has a maximal value at $t^*\in(0,1)$
\[
t^*=2-\sqrt{2}, \qquad f_{\max}=\log(6-4\sqrt{2})=2\log(2-\sqrt{2}).
\]
Thus
\[
\max_{t\in(0,1)} e^{n f(t)}=(6-4\sqrt{2})^{n}.
\]
Note that although 
\(\psi(t)\)
diverges at the endpoints, the full integrand
\[
t^{\,n-\frac12}(1-t)^{\,n-1}\!\left(1-\tfrac{t}{2}\right)^{-(n+\frac12)}
=e^{n f(t)}\psi(t)
\]
is integrable for every \(n\),
therefore by the Laplace method, we get:
\[
\int_0^1 e^{n f(t)}\psi(t)\,dt
\le C\,(6-4\sqrt{2})^{n}
\]
for some constant \(C>0\).

Combining the two parts gives
\[
F_n
\le
\frac{4^{\,n}\sqrt{n}}{\sqrt{2\pi}}\,
(6-4\sqrt{2})^{n}\,C
=\mathrm{poly}(n)\,\bigl(24-16\sqrt{2}\bigr)^{n},
\]

\[
\boxed{
F_n \;\le\; \bigl(24-16\sqrt{2}\bigr)^{n}\cdot \mathrm{poly}(n)
}
\]

The exponential growth rate is thus governed by 
\(\displaystyle (24-16\sqrt{2})^{n}\) proving the convergence of the companion form matrix $C(n)$ is to $-\frac{F_2}{F_1}$.

\subsection{Proof for the entire \texorpdfstring{$\pi$-CMF}{pi-CMF} and related hypergeometric constants}
\label[appendix]{appendix-cmf-convergence-proof}
To prove the convergence for the entire CMF we need to estimate $F_n$ for each trajectory $(\alpha,\beta,\gamma)\in \mathbb{Z}^3$. We will estimate the exponential growth in a way similar to the process done for direction $(1,1,2)$, but keep it general and symbolic. This will give us the exponential growth rate of $F_n$ in each trajectory $(\alpha,\beta,\gamma)$. Next, we will analyze the CMF to estimate the possible growth rates from the eigenvalues and show that the smallest one is equal to the estimated rate of $F_n$ we had before.
\subsubsection{The growth rates for given directions in the CMF}
Recall that the CMF is constructed as a D-finite CMF with respect to bases $\{_2F_1,\Theta\cdot _2F_1\}$. We can conduct a change of basis to get an equivalent CMF with bases $\{ _2F_1,\frac{y0}{x0x1}\Theta\cdot _2F_1\}$. This CMF is defined by the matrices:
\begin{equation}
    \begin{aligned}
M_{\mathbf{x}} &=
\begin{pmatrix}
    1 & \frac{z}{x+1} \\
    \frac{y}{z} & \frac{2x + y - 2z + 2}{x+1}
\end{pmatrix}
\\
M_{\mathbf{y}} &=
\begin{pmatrix}
    1 & \frac{z}{y+1}  \\
    \frac{x}{z} & \frac{x + 2y - 2z + 2}{y+1}
\end{pmatrix}
\\
M_{\mathbf{z}} &=
\begin{pmatrix}
    \frac{z(-x - y + z)}{(y - z)(x - z)} & \frac{z(z+1)}{(y - z)(x - z)} \\
    \frac{xy}{(y - z)(x - z)} & \frac{-z(z+1)}{(y - z)(x - z)}
\end{pmatrix}
\end{aligned}
\end{equation}
The matrices here now have coefficients of balanced degree 0: When going to infinity in ${x,y,z}$, only the dominant factors from the denominator and numerator contribute, so at infinity we get these three commuting matrices:
\begin{equation}
    \begin{aligned}
\widetilde{M}_{\mathbf{x}} &=
\begin{pmatrix}
    1 & \frac{z}{x} \\
    \frac{y}{z} & \frac{2x + y - 2z }{x}
\end{pmatrix}
\\
\widetilde{M}_{\mathbf{y}} &=
\begin{pmatrix}
    1 & \frac{z}{y}  \\
    \frac{x}{z} & \frac{x + 2y - 2z }{y}
\end{pmatrix}
\\
\widetilde{M}_{\mathbf{z}} &=
\begin{pmatrix}
    \frac{z(-x - y + z)}{(y - z)(x - z)} & \frac{z^2}{(y - z)(x - z)} \\
    \frac{xy}{(y - z)(x - z)} & \frac{-z^2}{(y - z)(x - z)}
\end{pmatrix}
\end{aligned}
\end{equation}
These matrices can be mutually diagonalized, set  
$$\Delta = x^{2} + 6xy - 4xz + y ^{2} - 4yz + 4z^{2}= (x + y - 2z)^{2} + 4xy$$ 
$$
S_1 = 3x + y - 2z, \  \ S_2 = x + 3y - 2z
$$
We have the eigenvalues:
\begin{equation*}
\begin{aligned} 
\widetilde{M}_{\mathbf{x}} &\;\longrightarrow\;
\frac{S_1}{2x}
\;\pm\;
\frac{\sqrt{\Delta}}{2x}
= \{\lambda_{1,-},\,\lambda_{1,+}\}, \\[6pt]
\widetilde{M}_{\mathbf{y}} &\;\longrightarrow\;
\frac{S_2}{2y}
\;\pm\;
\frac{\sqrt{\Delta}}{2y}
= \{\lambda_{2,-},\,\lambda_{2,+}\}, \\[6pt]
\widetilde{M}_{\mathbf{z}} &\;\longrightarrow\;
-\frac{z(x + y)}{2(x - z)(y - z)}
\;\pm\;
\frac{z\,\sqrt{\Delta}}{2(x - z)(y - z)}
= \{\lambda_{3,-},\,\lambda_{3,+}\}.
\end{aligned}
\end{equation*}

\noindent
Hence
\[
\begin{cases}
|\lambda_{1,-}| \le |\lambda_{1,+}|
&\text{if } \Delta = 0 \text{ or } S_1 \ge 0, \\[6pt]
|\lambda_{2,-}| \le |\lambda_{2,+}|
&\text{if } \Delta = 0 \text{ or } S_2 \ge 0, \\[6pt]
|\lambda_{3,-}| \le |\lambda_{3,+}|
&\text{if } \Delta = 0 \text{ or } z = 0 \text{ or } x + y \le 0.
\end{cases}
\]

The growth rates of a trajectory matrix in direction $v=(\alpha,\beta,\gamma)$ are exactly the eigenvalues of $T_v(\infty)$, which is the limit of the trajectory matrix $T_{x',v}(n)$ with $n$ going to infinity.
By our construction of the "infinity matrices" $\widetilde{M}$, we have that
\[
T_v(\infty) = \Tilde{M}_x(\alpha,\beta,\gamma)^\alpha\cdot \Tilde{M}_y(\alpha,\beta,\gamma)^\beta \cdot \Tilde{M}_z(\alpha,\beta,\gamma)^\gamma
\]
If we assume, for example, that $$S_1((\alpha,\beta,\gamma)\le 0,\ 
S_2((\alpha,\beta,\gamma)\le 0 ,\  \alpha+\beta\ge 0$$
We get that the minimal growth is given by
\begin{equation}
\label{eq-B-growth}
B(\alpha,\beta,\gamma)=\lambda_{1,+}(\alpha,\beta,\gamma)^\alpha \cdot  
\lambda_{2,+}(\alpha,\beta,\gamma)^\beta
\cdot
\lambda_{3,+}(\alpha,\beta,\gamma)^\gamma
\end{equation}

\subsubsection{The growth rates for the $F_n$ in each direction}
We consider the Gauss hypergeometric function in the scaling form
\[
F(n;x) \;=\; {}_2F_1(a,b;c;x)
\qquad\text{with}\qquad
a=a_0+\alpha n,\quad
b=b_0+\beta n,\quad
c=c_0+\gamma n,
\]
for fixed real slopes $(\alpha,\beta,\gamma)$ and a fixed $x\in(0,1)$.
As $n\to\infty$, the asymptotic growth of $F_n$ is governed by a Laplace–type integral representation, and takes the general form
\[
|F(n;x)| \;\asymp\;
B(\alpha,\beta,\gamma;x)^{\,n},
\]
where $B(\alpha,\beta,\gamma;x)>0$ is the \emph{exponential base}.

The base $B$ depends on which integral representation (\emph{Euler–b, Euler–a, or
Kummer at $x\approx1$}) is valid, and whether the dominant contribution comes from
an \emph{interior saddle} or an \emph{endpoint}.
For $x=\tfrac12$ these regimes can be described explicitly as follows.

Assume 
\[
\gamma>\alpha,\qquad \gamma\ge\beta\ge 0 ,
\]

In this regime, the integral
\[
{}_2F_1(a,b;c;x)
=\frac{\Gamma(c)}{\Gamma(b)\Gamma(c-b)}
\int_0^1 t^{\,b-1}(1-t)^{c-b-1}(1-xt)^{-a}\,dt
\]
is convergent and admits an interior stationary point $t^*\in(0,1)$.

The phase function is
\[
\Phi_1(t)=\beta\log t + (\gamma-\beta)\log(1-t) - \alpha\log(1-xt),
\]
and for $x=\tfrac12$ the saddle equation $\Phi_1'(t)=0$ has the exact solution
\[
t^*=\frac{-(\alpha-\beta-2\gamma)-\sqrt{\Delta}}
{2(\gamma-\alpha)}, \qquad
\Delta=\alpha^2+6\alpha\beta-4\alpha\gamma+\beta^2-4\beta\gamma+4\gamma^2.
\]

Substituting $t^*$ into the integrand and adding the Stirling rate coming from the Gamma prefactors
\[
S_b=\gamma\log|\gamma|-\beta\log|\beta|-(\gamma-\beta)\log|\gamma-\beta|
\]
yields the simplified closed form
\[
\boxed{
\begin{aligned}
B_{\mathrm{int}}(\alpha,\beta,\gamma)
&=
\left(
\frac{3\alpha+\beta-2\gamma-\sqrt{\Delta}}{2\alpha}
\right)^{\!\alpha}
\left(
\frac{\alpha+3\beta-2\gamma-\sqrt{\Delta}}{2\beta}
\right)^{\!\beta}
\\[-2mm]
&\quad\times
\left(
\frac{-\gamma(\alpha+\beta-\sqrt{\Delta})}
{2(\alpha-\gamma)(\beta-\gamma)}
\right)^{\!\gamma}.
\end{aligned}
}
\]
Thus, in this regime,
\[
|y_1(n;\tfrac12)|\;\asymp\;
B_{\mathrm{int}}(\alpha,\beta,\gamma)^{\,n}.
\]
Which is equal to the function $B(\alpha,\beta,\gamma)$! 

\newcommand{\al}{\alpha}
\newcommand{\be}{\beta}
\newcommand{\ga}{\gamma}
\newcommand{\s}{\sqrt{\Delta}}

To see this, recall that
\[
t^*=\frac{-(\al-\be-2\ga)-\s}{2(\ga-\al)}.
\]
Then
\[
1-t^*=\frac{S}{2(\ga-\al)},\qquad
1-\frac{t^*}{2}=\frac{L_3}{4(\ga-\al)},
\]
where we introduce the linear forms $(s=-\s)$
\[
L_1:=3\al+\be-2\ga+s,\quad
L_2:=\al+3\be-2\ga+s,\quad
L_3:=(2\ga-3\al-\be)+s,\quad
S:=-s-(\al+\be).
\]

\emph{Step 1: LHS in linear forms.}
From the definitions,
\[
e^{\Phi(t^*)}
=\frac{(t^*)^{\be}(1-t^*)^{\ga-\be}}{(1-\tfrac{t^*}{2})^{\al}}
=\frac{\bigl[-(\al-\be-2\ga)-s\bigr]^{\be}\,S^{\ga-\be}}{L_3^{\al}}
\cdot
\frac{(4(\ga-\al))^{\al}}{(2(\ga-\al))^{\ga}}.
\]
Multiplying by the Gamma prefactor gives
\[
\mathrm{LHS}=
\frac{\bigl[-(\al-\be-2\ga)-s\bigr]^{\be}\,S^{\ga-\be}}{L_3^{\al}}
\cdot
\frac{(4(\ga-\al))^{\al}}{(2(\ga-\al))^{\ga}}
\cdot
\frac{\ga^{\ga}}{\be^{\be}(\ga-\be)^{\ga-\be}}.
\]

\emph{Step 2: RHS in linear forms.}
By definition of the \(\lambda\)'s,
\[
\mathrm{RHS}
=(\lambda_{1,+})^{\al}(\lambda_{2,+})^{\be}(\lambda_{3,+})^{\ga}
=\frac{L_1^{\al}L_2^{\be}S^{\ga}}{(2\al)^{\al}(2\be)^{\be}\,[\,2(\ga-\al)(\ga-\be)\,]^{\ga}}\,\ga^{\ga}.
\]

\emph{Step 3: Reduce to two linear identities.}
Cancel \(\ga^{\ga}\) and the common factor \(S^{\ga-\be}\). We need to show
\[
\frac{\bigl[-(\al-\be-2\ga)-s\bigr]^{\be}}{L_3^{\al}}
\cdot
\frac{(4(\ga-\al))^{\al}}{(2(\ga-\al))^{\ga}}
\cdot
\frac{1}{\be^{\be}(\ga-\be)^{\ga-\be}}
=
\frac{L_1^{\al}L_2^{\be}S^{\be}}{(2\al)^{\al}(2\be)^{\be}[\,2(\ga-\al)(\ga-\be)\,]^{\ga}}.
\]
After clearing constants this is equivalent to
\[
\bigl[-(\al-\be-2\ga)-s\bigr]^{\be}\, \bigl(8\al(\ga-\al)\bigr)^{\al}\,2^{\be}(\ga-\be)^{\be}
= L_1^{\al}L_2^{\be}S^{\be}\,L_3^{\al}.
\tag{$\ast$}
\]
It suffices to establish the following two identities (both linear or quadratic in \(s\)):

\[
\boxed{\text{(I)}\quad
\bigl[-(\al-\be-2\ga)-s\bigr]\cdot 2(\ga-\be)=L_2\,S,
}
\]
\[
\boxed{\text{(II)}\quad
L_1\,L_3=8\,\al(\ga-\al).
}
\]

\emph{Verification of (I).}
Expand the right-hand side:
\[
L_2\,S=(\al+3\be-2\ga+s)(\,s-\al-\be\,)
= s^2 - (\al+\be)(\al+3\be-2\ga) + s(\al+3\be-2\ga - \al - \be).
\]
Using \(s^2=\Delta=(\al+\be-2\ga)^2+4\al\be\), a short simplification yields
\[
L_2\,S=2(\ga-\be)\,\bigl(-\al+\be+2\ga - s\bigr)
=2(\ga-\be)\,\bigl[-(\al-\be-2\ga)-s\bigr],
\]
which is exactly (I).

\emph{Verification of (II).}
Note \(L_3=(2\ga-3\al-\be)+s = - (3\al+\be-2\ga) + s\). Hence
\[
L_1L_3=(s+3\al+\be-2\ga)\,(s-(3\al+\be-2\ga))
= s^2 - (3\al+\be-2\ga)^2.
\]
But
\[
(3\al+\be-2\ga)^2=(\al+\be-2\ga+2\al)^2
=(\al+\be-2\ga)^2+4\al(\al+\be-2\ga)+4\al^2.
\]
Therefore,
\[
L_1L_3
= \bigl[(\al+\be-2\ga)^2+4\al\be\bigr]
 - \bigl[(\al+\be-2\ga)^2+4\al(\al+\be-2\ga)+4\al^2\bigr]
=8\,\al(\ga-\al),
\]
which is (II).
\qed

\paragraph{Example.}
For $(\alpha,\beta,\gamma)=(1,1,2)$ one obtains
$B=B_{\mathrm{int}}=24-16\sqrt{2}$,
matching direct evaluation.

And for example over the point $(\alpha,\beta,\gamma)=(1,1,3)$ one obtains
$B=B_{\mathrm{int}}=(-\tfrac{297}{4} + 135\cdot \tfrac{\sqrt{5}}{4})$.

\subsubsection{Proof method}
For each regime of the parameters $\alpha,\beta,\gamma$, one can establish the growth rates of the hypergeometric function and the possible eigenvalues of the trajectory matrix at infinity. When the growth rate of the hypergeometric match the value of the slowest eigenvalue we have by Lemma~\ref{lemma-limit-value} that the limit is a ratio of the Gauss hypergeometric function.

This procedure is can be dome for ${}_2F_1(a_0+\alpha\cdot n,b_0+\beta\cdot n,c_0+\gamma\cdot n;x)$ and the correspondent values of with depend only on $\alpha,\beta$ and $\gamma$ so using the same CMF for another initial point $(a_0,b_0,c_0)$ one can get a unified proof that the trajectory converges to the ratio of the hypergeometric functions.
In our case, with initial points being half integers, the ratio is always a rational function of $\pi$ but we get infinitely many new proving formulas for any other constant that is a ratio of Gauss Hypergeometrics (such as $\log(2)$ if we take integer initial points), some values even have a positive irrationality measure!

\FloatBarrier

\subsubsection{General D-finite CMF}

This above process can be done for a general D-finite CMF (see \citep{weinbaum2025conservativematrixfieldscontinuous}). We can base-change the CMF to have balanced degree coefficients and symbolically find the eigenvalues in each trajectory.
We can also apply analysis to the D-finite function the CMF is built from in order to estimate the growth rate; all we need to do is see that the smallest eigenvalue matches the growth rate of the D-finite function.

\subsection{Scanning the dynamical metrics of the \texorpdfstring{$\pi$-CMF}{pi-CMF}}
\label[appendix]{append-cmf-scan}

\begin{figure}
    \centering
    \includegraphics[width=\linewidth]{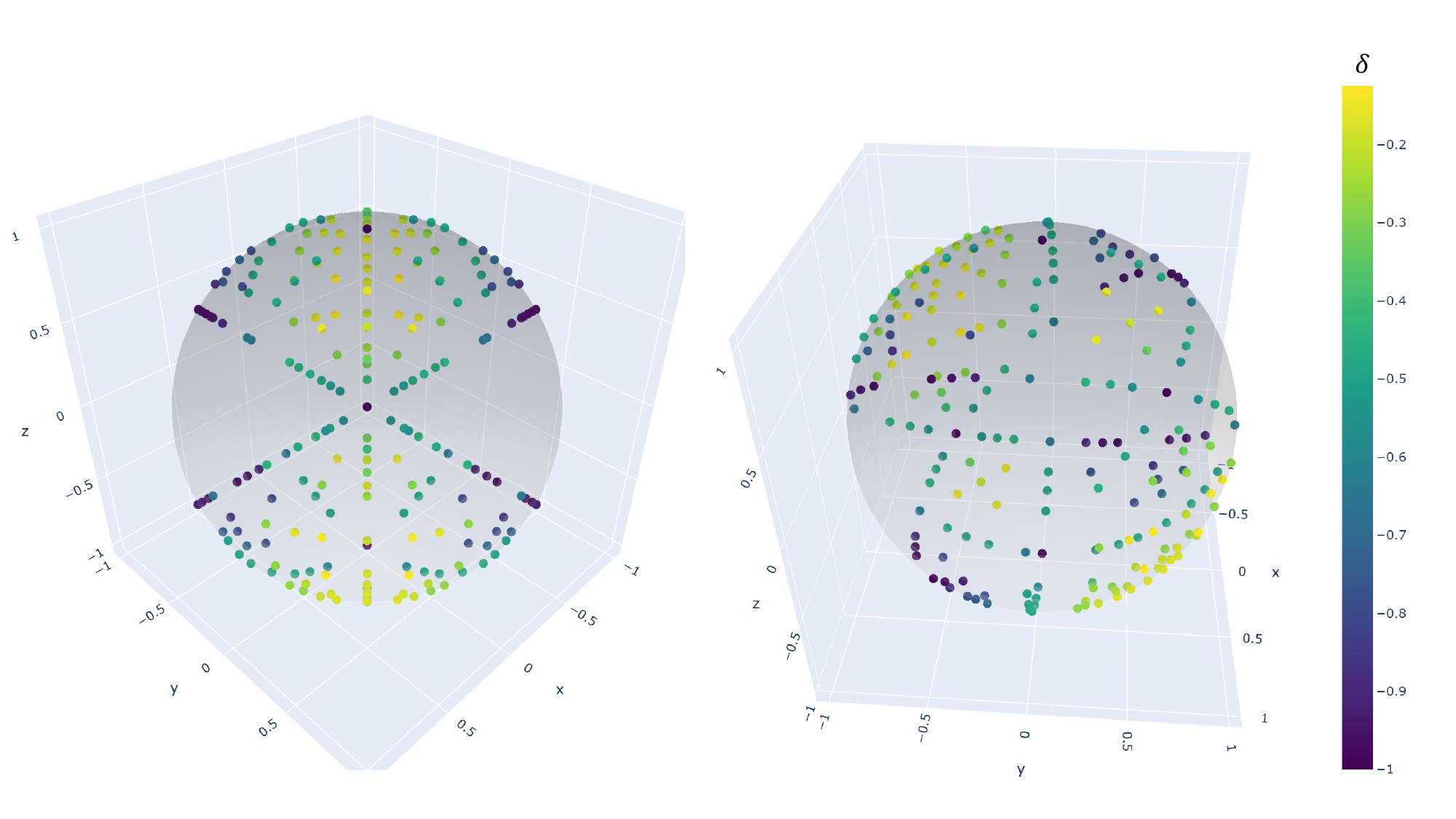}
    \caption{The irrationality measure $\delta$, \cref{def:irrationlity_measure}, for different directions in the $\pi$-CMF.}
    \label{fig:delta-scan}
\end{figure}

\begin{figure}
    \centering
    \includegraphics[width=\linewidth]{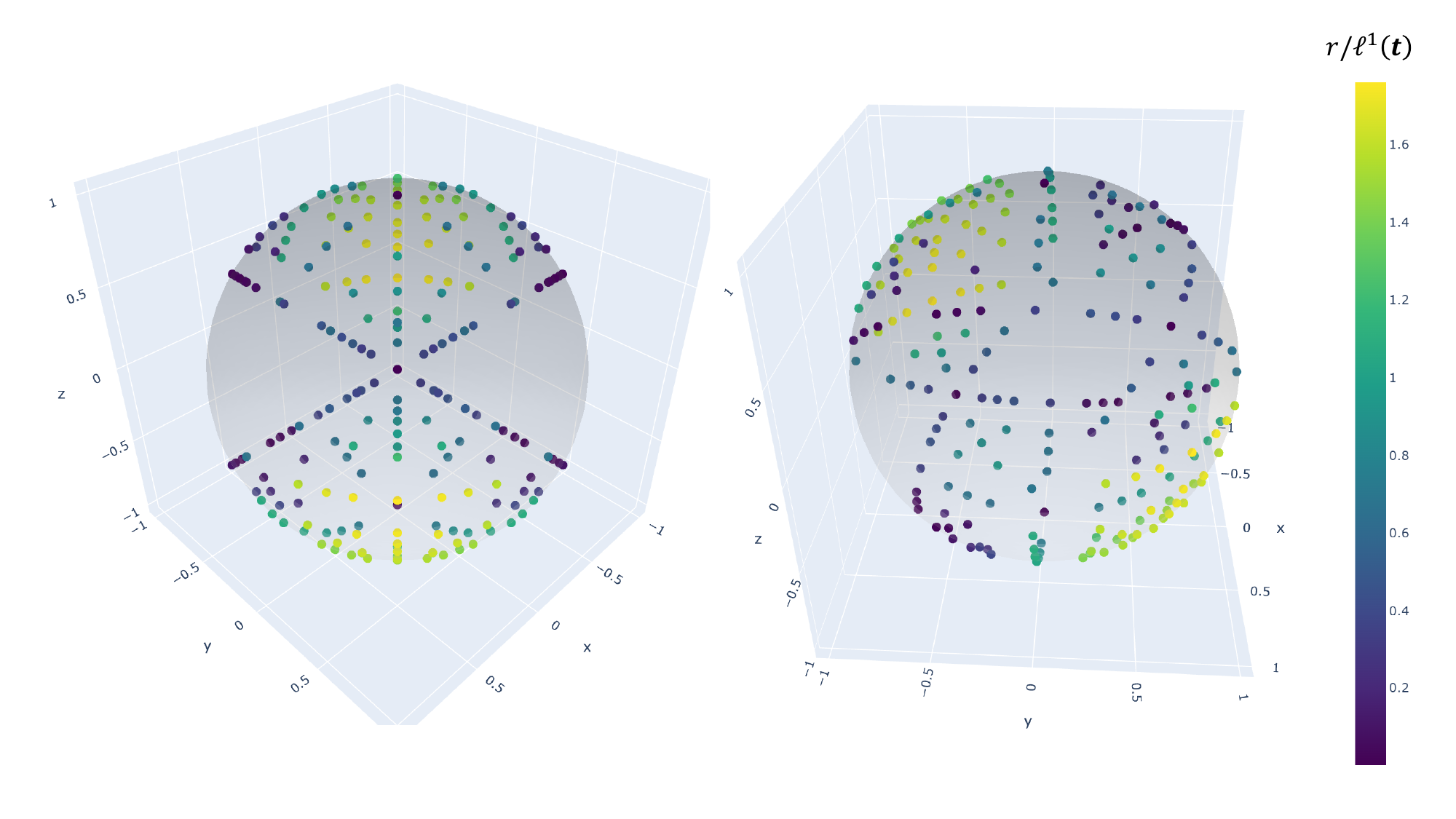}
    \caption{The normalized convergence rate $r/\ell^1(t)$ for different trajectories $t$ in the $\pi$-CMF.}
    \label{fig:normalized-convergence-scan}
\end{figure}

\cref{fig:delta-scan} and \cref{fig:normalized-convergence-scan} visualize the dynamical metrics of the $\pi$-CMF on a unit sphere.

The \textit{normalized convergence rate} is defined $r / \ell^1(t)$, where $r$ is the convergence rate from \cref{def:convergence_rate}, $t$ is a trajectory and $\ell^1$ is the $\ell^1$-norm. It allows an unbiased comparison between formulas arising from the CMF. The need for such an unbiased convergence parameter for CMF formulas stems from the artificial convergence acceleration that taking multiples of trajectories can cause: if trajectory $(1, 2, 3)$ has (exponential) convergence rate $r$, then trajectory $(2, 4, 6)$ will have convergence rate $2r$. The convergence performance of these parallel trajectories, though, remains constant when measured by the normalized convergence rate. This enables a well-defined convergence measurement for each \textit{direction}.

\FloatBarrier

\section{LLM formula equivalence detection and proving}
\label[appendix]{appendix-llm-equivalence-proof}
LLMs were independently asked whether formula pairs proven equivalent by our system are equivalent (equivalence detection) and subsequently tasked with proving equivalence (equivalence prooving), regardless of the answer to the first question.

\subsection{Test formula pairs}

The following equivalent formula pairs, proven by our system, were given to GPT-4o and Gemini 2.5 Pro Preview for detection and proof: (1,2), (3,4), (1,5) from \cref{tab:canonicalization-examples}; (9,10), (63,66), (75,76) from \cref{tab:unified-formulas}; and (82,84), (101,103), (120,121), (140,142) from \cref{tab:formulas-not-yet-unified}. Detection is binary and easily checked, while proofs were checked manually (recall \cref{tab:llm_equivalence_proofs}). The full results are shown in \cref{tab:llm_equivalence_detection+proofs_stats}.

\begin{table}[h]
    \centering
    \caption{\textbf{Detailed outcomes of LLM equivalence detection and proving attempts.} The three segments of the table respectively contain formulas from Tables \ref{tab:canonicalization-examples}, \ref{tab:unified-formulas} and \ref{tab:formulas-not-yet-unified}. Gemini 2.5 Pro Preview is remarkable in proving 50\%, sometimes by finding Wilf-Zeilberger pairs \citep{A_eq_B_Zeilberger}, but still falls short of the full challenge.}
    \resizebox{\textwidth}{!}{%
    \begin{tabular}{cP{3cm}cccc}
    \toprule
         Formula pair & Note & \makecell{GPT-4o\\detect} & \makecell{GPT-4o\\prove} & \makecell{Gemini 2.5 Pro Preview\\detect} & \makecell{Gemini 2.5 Pro Preview\\prove} \\
         \midrule
         (1,2) & equal term by term & - & -& -& +\\
         (3,4) & partial fraction decomposition & - & + & +& +\\
         (1,5) && - & -& -& -\\
         \midrule
         (9,10) && - & -& +& -\\
         (63,66) & equal term by term up to added constant & - & -& +& -\\
         (75,76) && - & -& +& -\\
         \midrule
         (82,84) & Ramanujan-Sun pair from \cref{section-results} & - & -& +& -\\
         (101,103) && -& -& +&+ \\
         (120,121) & direct fold by 2 (see \cref{appendix-maths-fold}) & + & + & +& +\\
         (140,142) && - & -& +& +\\
         \bottomrule
    \end{tabular}
    }\label{tab:llm_equivalence_detection+proofs_stats}
\end{table}

\subsection{Equivalence detection and proving prompts}

\noindent \textbf{Detection - system message:}

\begin{tcolorbox}
You are an equivalence detector. Your task is to decide whether two given formulas are mathematically equivalent.

Two formulas are equivalent if one formula implies the other, and vice versa.
In other words, you need to determine whether the following statement is true:
Formula A converges to Value A if and only if Formula B converges to Value B.

You are not required to prove the equivalence—only to determine whether the equivalence appears to hold.
Base your decision on mathematical structure, known identities, or other relevant patterns.
Respond with a clear answer: Yes (equivalent) or No (not equivalent).
\end{tcolorbox}

\noindent \textbf{Detection - user message:}

\begin{tcolorbox}
Two formulas are said to be equivalent if one holds if and only if the other does. Specifically,
Formula A converges to and equals value A if and only if Formula B converges to and equals value B.

In other words, the truth of one formula guarantees the truth of the other, and vice versa.

Given the following two formulas:

FORMULAA\_VALUEA

FORMULAB\_VALUEB

Are these formulas equivalent?
\end{tcolorbox}

\noindent \textbf{Proving - system message:}

\begin{tcolorbox}
You are an equivalence prover. Your task is to determine whether two given formulas are mathematically equivalent.
That is, proving one formula should be sufficient to establish the other.

In other words, your goal is to rigorously assess and demonstrate that:

If formula A converges to a value A, then formula B must converge to value B, and vice versa.

Do not assume equivalence—justify it. Use mathematical reasoning, transformations, or known identities to support your argument.

The proof of equivalence should be rigorous and detailed.
\end{tcolorbox}

\noindent \textbf{Proving - user message:}

\begin{tcolorbox}
Two formulas are said to be equivalent if one holds if and only if the other does. Specifically,
Formula A converges to and equals value A if and only if Formula B converges to and equals value B.

In other words, proving the validity of one formula must be sufficient to establish the validity of the other.

Given the following two formulas:

FORMULAA\_VALUEA  

FORMULAB\_VALUEB

These formulas are equivalent. Please provide a rigorous and detailed justification, a complete proof of equivalence.
\end{tcolorbox}

\section{Formula harvesting details} \label[appendix]{appendix-engineering}

\subsection{Article retrieval}
\label[appendix]{appendix-engineering-article-retrieval}
arXiv's search API was not reliable for retrieving papers with $\pi$ formulas. Some simple queries such as ``formula for pi" or ``$\pi$ formula" returned few results (and if the search method is not set to word-for-word results are mostly irrelevant). Not knowing all patterns in which $\pi$ tends to be calculated in, we went with a more exhaustive approach.

455,050 articles from the following categories which were indexed in the arXiv metadata dataset \citep{arXivMetadata} as of 24 November, 2024, were scraped: math.CA, math.NT, math.PR, math.CO, math.GM, math.HO, cs.AI, cs.LG and cs.DC.

\subsection{\texorpdfstring{$\text{\LaTeX }$}{LaTeX} equation patterns and preprocessing}
\label[appendix]{appendix-engineering-equation-patterns}

The following unnumbered or inline $\text{\LaTeX }$ math environments were scraped from all articles:

\begin{itemize}
    \item \texttt{\$ $\cdot$ \$},
    \item \texttt{\$\$ $\cdot$ \$\$},
    \item \texttt{\textbackslash{}[ $\cdot$ \textbackslash{}]}
    \item \texttt{\textbackslash{}( $\cdot$ \textbackslash{})}
    \item \texttt{math}
\end{itemize}

The following $\text{\LaTeX }$ equation environments were scraped from all articles:

\begin{itemize}
    \item \texttt{equation}
    \item \texttt{align}
    \item \texttt{gather}
    \item \texttt{multline}
    \item \texttt{alignat}
    \item \texttt{eqnarray}
\end{itemize}
\texttt{}

Starred (\texttt{*}) versions of the latter equation environments were also collected, for a total of 17 environments.

Equation environments were kept so strings with multiple equations could be split into distinct formulas during preprocessing. Preprocessing was mainly aimed at removing text within equations and setting uniform symbols for objects like \texttt{\textbackslash{}ddots}, \texttt{\textbackslash{}cdots}.

\subsection{Formula retrieval patterns}
\label[appendix]{appendix-engineering-formula-patterns}

\begin{table}[ht!]
    \caption{\textbf{$\text{\LaTeX }$ formula patterns.} Each pattern was paired with both ``$\backslash pi =$'' and ``$= \backslash pi$,'' and the \texttt{\textbackslash cfrac}-based variants of the \texttt{\textbackslash frac}-containing regular expressions were also included, resulting in a total of 10 patterns.}
    \vskip 0.15in
    \centering
    \begin{center}
    \begin{small}
    \begin{tabular}{lc}
        \toprule
        Pattern for - & Python \textbf{re} pattern \\
        \midrule
        Series & \textbackslash{} sum \textbackslash{} s*\_\{(?s:.)*\}\textbackslash{} s*\textbackslash{} char`\^ \textbackslash{} s*
        \\ \\
        
        Sum of \textbackslash{} frac
        & (\textbackslash{} s*\textbackslash{} frac\textbackslash{} s*\{\textbackslash{} s*[\char`\^  \{\}]*\textbackslash{} s*\}\textbackslash{}s*\{\textbackslash{} s*[\char`\^  \{\}]*\textbackslash{} s*\}) \\ & ((?:\textbackslash{} s*\textbackslash{}+\textbackslash{} s*\textbackslash{} frac\textbackslash{} s*\{\textbackslash{} s*[\char`\^  \{\}]*\textbackslash{} s*\}\textbackslash{} s*\{\textbackslash{} s*[\char`\^  \{\}]*\textbackslash{} s*\})+) \\ \\
        
        Nested \textbackslash{} frac &
        (\textbackslash{} frac\textbackslash{} s*\{\textbackslash{} s*[\char`\^ \{\} ]*\textbackslash{} s*\}\textbackslash{} s*\{\textbackslash{} s*[\char`\^ \{\} ]*) \\ & ((?:\textbackslash{} frac\textbackslash{} s*\{\textbackslash{} s*[\char`\^ \{\} ]*\textbackslash{} s*\}\textbackslash{} s*\{\textbackslash{} s*[\char`\^ \{\} ]*)+\textbackslash{} s*\}\textbackslash{} s*\}) \\
        
        \bottomrule
   \end{tabular}
   \label{tab:retrieval_regexes}
    \end{small}
    \end{center}
\end{table}

In addition to positive regular expressions (\cref{tab:retrieval_regexes}), presence of any of the following strings in preprocessed data halted processing: \texttt{sqrt}, \texttt{tan}, \texttt{cos}, \texttt{sin}, \texttt{log}, \texttt{ln}, \texttt{zeta}, \texttt{pi\char`\^{}}

\subsection{Formula classification and extraction}
\label[appendix]{appendix-classification-and-extraction}
LLMs were run via API---GPT-4o with the OpenAI API, Claude 3.7 Sonnet with Anthropic's API and Gemini 2.5 Pro Preview through Google AI Studio.

As a formula is extracted, information collected from previous prompts is appended to the following prompts to reinforce context. See \cref{appendix-engineering-prompts} for the prompts used. All prompts to the LLM use a temperature of 0 to promote consistency. The OpenAI GPT API supports a return format called Structured Outputs, which guarantees a json schema of choice is returned by the LLM. Formula candidates are passed to the LLM for zero-shot binary classification, into classes: formulas that compute the constant $\pi$ and formulas that do not. Candidates are then passed again for trinary classification into classes: series, continued fraction (the targets of this study), and other formulas (which are either not $\pi$ formulas or are but are not series or continued fractions). The third class is intentionally redundant, as some strings which GPT-4o mini gets wrong may be correctly filtered out by GPT-4o: out of 3367 strings classified as $\pi$-computing series or continued fractions by GPT-4o mini, 1711 were discarded by GPT-4o, leaving 1656. All examples which were checked manually were false positives, discarded when double checked by GPT-4o. Example: $g ( x ) = \frac{\pi}{2} + \sum_{j = 1}^{\infty}a_nx^n$ (arXiv 2206.11256).

The vast majority of formulas classified as $\pi$ formulas by GPT-4o were series rather than nested fractions (1,591 vs 65). In some instances the LLM successfully generalized sequences of numbers to polynomial expressions, see \cref{tab:appendix-engineering-llm-polynomial-extraction}. The cost for API calls for the entire analysis---classification and extraction---was under \$50.

\begin{table}[ht!]
    \caption{OpenAI's GPT-4o extracts the partial numerator and partial denominator of a continued fraction. Explanation is by the LLM. $\text{\LaTeX }$ taken from \citep{Raayoni2021} (arXiv 1907.00205).}
    \label{tab:appendix-engineering-llm-polynomial-extraction}
    \centering
    \begin{center}
    \begin{small}
    \begin{tabular}{cc}
        \toprule
       $\text{\LaTeX }$ & \texttt{3 + \textbackslash{}frac\{1 * 3\}\{5 + \textbackslash{}frac\{2 * 4\}\{7 + } \\
       & \texttt{\textbackslash{}frac\{3 * 5\}\{9 + \textbackslash{}frac\{4 * 6\}\{11 + \dots\}\}\}\} = } \\
       & \texttt{3\textbackslash{}frac\{1\}\{\_\{2\}F\_\{1\} ( 1,\textbackslash{}frac\{1\}\{2\}; \textbackslash{}frac\{5\}\{2\}; - 1 ) \} = }\\
       & \texttt{\textbackslash{}frac\{4\}\{\textbackslash{}pi - 2\}} \\
       \midrule
       Rendered & $ 3 + \frac{1 \cdot 3}{5 + \frac{2 \cdot 4}{7 + \frac{3 \cdot 5}{9 + \frac{4 \cdot 6}{11 + \dots}}}} = 3 \cdot \frac{1}{\, _2F_1\left( 1, \frac{1}{2}; \frac{5}{2}; -1 \right)} = \frac{4}{\pi - 2}$ \\
        \midrule
        Prompt & Identify the partial denominator an and partial numerator bn. \\ & Extract each of them and write them as proper SymPy expressions as a \\ & function of depth n. \\
        \midrule
        Output & $a_n$ = 2*n + 3, $b_n$ = n * (n + 2) \\
       \midrule
       Explanation & The partial denominator an is the sequence 5, 7, 9, 11,... which can be expressed as 2*n + 3.
       \\ & The partial numerator bn is given by the sequence
       1*3, 2*4, 3*5, 4*6,...
       \\ & which can be expressed as n*(n + 2). \\
       \bottomrule
   \end{tabular}
    \end{small}
    \end{center}
\end{table}

In the future, additional formulas could be gathered by allowing the LLM to decide whether to call a polynomial fitting function on such sequences.

\subsection{Formula validation}
\label[appendix]{appendix-engineering-formula-validation}
Validating the collected formulas presents a challenge. The safest way to validate that a formula converges to the expected constant is by computing the formula (attempting to automatically reconstruct and verify a proof in a formal language is beyond the scope of this study). As exemplified in \cref{tab:appendix-engineering-llm-example-wrong-value}, when prompted for the value of a formula, GPT-4o sometimes omitted free constants, multipliers, or simple arithmetic adjustments required for the proposed value to match the true value of the formula.

\begin{table}[ht!]
    \caption{A typical extraction by OpenAI's GPT-4o: The LLM extracts the partial numerator and partial denominator of a continued fraction, but fails to find the correct value of the formula it collects, which needs to be isolated through simple arithmetic; the correct value is
    $\left( \frac{2}{1 - \frac{\pi}{4} \cdot \frac{3 \cdot 3 \cdot 5 \cdot 5}{2 \cdot 4 \cdot 4 \cdot 6}} - 1 \right) = \frac{256 + 75 \pi}{256 - 75 \pi}$. $\text{\LaTeX }$ taken from arXiv 1806.03346.
    }
    \label{tab:appendix-engineering-llm-example-wrong-value}
    \vskip 0.15in
    \centering
    \begin{center}
    \begin{small}
    \begin{tabular}{cc}
        \toprule
       $\text{\LaTeX }$ & \texttt{\textbackslash{}frac\{\textbackslash{}pi\}\{4\} = \textbackslash{}frac\{2\}\{3\} * \textbackslash{}frac\{4\}\{3\}} \\ 
       & \texttt{* \textbackslash{}frac\{4\}\{5\} * \textbackslash{}frac\{6\}\{5\} * [ 1 - \textbackslash{}cfrac\{2\}\{25 + \textbackslash{}cfrac\{1 * 3\}\{24 + } \\
       & \texttt{\textbackslash{}cfrac\{3 * 5\}\{24 + \textbackslash{}cfrac\{5 * 7\}\{24 + ... \}\}\}\}\} ]} \\
       \midrule
        Rendered & $\frac{\pi}{4} = \frac{2}{3} \cdot \frac{4}{3} \cdot \frac{4}{5} \cdot \frac{6}{5} \cdot \left( 1 - \cfrac{2}{25 + \cfrac{1 \cdot 3}{24 + \cfrac{3 \cdot 5}{24 + \cfrac{5 \cdot 7}{24 + \dots}}}} \right) $ \\
        \midrule
        Prompt & Extraction pipeline (\cref{appendix-engineering-prompts}) with an added prompt for identifying the value. \\
        \midrule
        Output & $a_n$ = 24, $b_n$ = (2n-1)*(2n+1), value = pi / 4. \\
       \bottomrule
   \end{tabular}
    \end{small}
    \end{center}
\end{table}

PSLQ \citep{PSLQ} addresses this by finding the correct Möbius transformation between the constant of interest and the formula’s value, relying solely on the algebraic expression extracted by the LLM. Given the prior belief that a formula computes a constant and that a nontrivial integer relation exists between the formula’s empirical value and the constant of interest, the integer relation found is likely the correct one. The validity of collected formulas is strengthened further when equivalence proofs are found between them.

After classifying and extracting, GPT-4o still produces some expressions that do not compute $\pi$. The PSLQ-validation stage removes these false positives by finding the precise limit of each formula in terms of $\pi$. Of the 660 candidates, 147 are not Diophantine formulas (e.g. they contain $\pi$ itself in the series term), leaving 535 formulas. 150 of these did not pass validation, and many of these were formulas that do not compute $\pi$ like $-\frac{4}{\pi}\sum_{n=0}^{\infty}\frac{(-1)^n}{(2n+1)^{2}} = -\frac{4 G}{\pi}$ (arXiv 1906.04927), and formulas with mistakes like $ \sum_{i = 1}^{\infty} \frac{1}{i^2} = \frac{\pi}{6} $ (arXiv 1811.05831, under proofs.tex).
Example for a non-Diophantine formula for $\pi$: $\sum_{n = 0}^{\infty} \frac{ ( \frac12 ) _n ( \frac13 ) _n ( \frac23 ) _n}{ ( 1 ) _n^3} ( \frac{2}{27} ) ^n = \frac{3\pi}{4 \, \Gamma^2 ( \frac23 ) \Gamma^2 ( \frac56 ) }$ (arXiv 2001.08104), as $\pi$ can be computed only when given the irrational denominator.

\subsection{Prompts for harvesting formulas}
\label[appendix]{appendix-engineering-prompts}

We utilized the Structured Output feature of OpenAI's models and used a temperature of 0 for all prompts.
The return schemas are included with the prompts. 
$\text{\LaTeX }$ from prompts is rendered for readability.

\subsubsection{Classification prompts}
Initial classification by GPT-4o mini:

\noindent \textbf{System message:}
\vspace{2mm} 

\begin{tcolorbox}
You are a model that classifies whether a $\text{\LaTeX }$ string is a formula that can be rearranged to calculate the constant \\ \texttt{\{constant\}}. 
Specifically, we are interested in continued fractions and series.
\end{tcolorbox}

\vspace{4mm} 

\noindent \textbf{User message:}
\vspace{2mm}

\begin{tcolorbox}
Is this a continued fraction or a series that can be rearranged to calculate the constant \texttt{\{constant\}}? \texttt{\{latex\_string\}} \\

Structured output: boolean.
\end{tcolorbox}

\subsubsection{Extraction prompts}

For added context during extraction, the second classification query shown in \cref{fig:engineering-formula-extraction}c is actually conducted during the extraction stage. Values in source strings were not collected (see \cref{appendix-engineering-formula-validation}).

\noindent \textbf{System message:}

\begin{tcolorbox}
You are a model that extracts formula information from a $\text{\LaTeX }$ string.\\

Your task is to: \\
a. Classify the type of formula: series, continued fraction, or neither. \\
b. Extract its components and identify the variable. \\

This information will be used to compute the formula later, so it is critical that the extracted value and components are accurate to ensure correctness.\\

You will be asked separately about each of the following steps: \\
Step 1. Classify the formula: Determine whether the $\text{\LaTeX }$ string represents a series or a continued fraction that can be rearranged to calculate the constant \texttt{\{constant\}}.\\
Step 2. Identify the formula type: Specify whether it is a series or a continued fraction.\\
Step 3. Extract the formula components: \\
\quad - For series: Identify the term and the start value. \\
\quad - For continued fractions: Identify the partial numerator and partial denominator.\\
Step 4. Identify the variable of the formula: Clearly state the variable used in the formula.
\end{tcolorbox}

\noindent \textbf{User message: step 1}

\begin{tcolorbox}
Step 1: \\
Is this formula a series or a continued fraction that can be rearranged to calculate the constant \texttt{\{constant\}}?\\

\texttt{\{latex\_string\}} \\

Structured output: boolean.
\end{tcolorbox}

\noindent \textbf{User message: step 2}

\begin{tcolorbox}
Step 2: \\
Determine the type of formula. \\
Is this formula a continued fraction or a series?\\

\texttt{\{latex\_string\}} \\

Structured output: `series' or `cf' (for continued fraction).
\end{tcolorbox}

\noindent \textbf{User message: step 3}

The prompt in step 3 depends on the classification result.

If \texttt{formula\_type == `cf' (continued fraction)}

($\text{\LaTeX }$ for one-shot example taken from \citep{Raayoni2021} (arXiv 1907.00205).)

\begin{tcolorbox}
Step 3: \\
The formula is a continued fraction. Identify the following components: \\
1. The partial denominator (\texttt{an}) as a function of depth (\texttt{n}). \\
2. The partial numerator (\texttt{bn}) as a function of depth (\texttt{n}). \\
3. Any unknown variables or expressions (other than the depth \texttt{n}). \\

Write each component as a proper SymPy expression. For example: \\
The string 
\[
\forall z \in \mathbb{C}: \quad 1+\frac{1\cdot(2\cdot z-1)}{4+\frac{2\cdot(2\cdot z-3)}{7+\frac{3\cdot(2\cdot z-5)}{10+\frac{4\cdot(2\cdot z-7)}{13+\ldots}}}} = \frac{2^{2\cdot z +1}}{\pi\binom{2\cdot z}{z}}
\]
has the following: \\
- \texttt{an}: \texttt{`3*n + 1'} \\
- \texttt{bn}: \texttt{`n*(2*z - (2*n - 1))'} \\
- \texttt{unknowns}: \texttt{[`z']} \\

The continued fraction is: \\
\texttt{\{latex\_string\}} \\

Structured output: \{`an': str, `bn': str, `unknowns': list[str]\}
\end{tcolorbox}

If \texttt{formula\_type == `series'} 

($\text{\LaTeX }$ for one-shot example adapted from arXiv 1806.03346. Note the adapted formula is incorrect as the unknown was inserted at random.)

\begin{tcolorbox}
Step 3: \\
The formula is a series. Identify the following components: \\
1. The term as a SymPy expression. \\
2. The dummy variable. \\
3. The start value of the dummy variable. \\
4. Any unknown variables (other than the dummy variable). \\

For example: \\
The string 
\[
\pi \cdot z= \frac{22}{7} - 24\sum_{n=2}^\infty \frac{(-1)^{n}}{(2n+1 + z)(2n+2 + z)(2n+3)(2n+4)(2n+5)}
\]
has the following: \\
- \texttt{Term}: \texttt{`(-1)**n / ((2*n + 1)*(2*n + 2)*(2*n + 3)*(2*n + 4)*(2*n + 5))'} \\
- \texttt{Dummy variable}: \texttt{`n'} \\
- \texttt{Start}: \texttt{`2'} \\
- \texttt{Unknowns}: \texttt{[`z']} \\

Pay attention to special symbols like \texttt{\_symbol} (e.g., \texttt{\textbackslash{}frac{1}{2})\_n} ), which often indicate a SymPy \texttt{RisingFactorial}. Another symbol to look out for is \texttt{H\_}, which often means a SymPy \texttt{harmonic}. \\

The series is: \\
\texttt{\{latex\_string\}} \\

Structured output: \{`term': str, `dummy\_var': str, `start': int, `unknowns': list[str]\}.
\end{tcolorbox}

\noindent \textbf{User message: step 4}

($\text{\LaTeX }$ for one-shot example taken from arXiv 1806.03346)

\begin{tcolorbox}
Step 4: \\
Identify the variable used in the formula. \\
If the formula is a series, focus on the variable used in the outermost summation. \\
If the formula contains nested summations or other variables, ensure you extract only the variable from the outermost summation and exclude all others. \\

For example: \\
The string 
\[
\pi = \frac{22}{7} - 24\sum_{n=2}^\infty \frac{(-1)^n}{(2n+1)(2n+2)(2n+3)(2n+4)(2n+5)}
\]
has the outermost summation variable: \texttt{`n'}. \\

Extract the variable from the formula: \\
\texttt{\{latex\_string\}}. \\

Structured output: str.
\end{tcolorbox}

\subsubsection{Code-correcting prompts}
\label[appendix]{appendix-llm-code-correction}
After extraction, each of the code components extracted in steps 3, 4 from the above prompts went through an execution test. Faulty code was sent back to GPT-4o for correction to SymPy code that runs properly. A total of only 16 code-correction iterations were needed during a run on 847 classified formulas, a testament to the LLM's ability to write executable code. We are confident that this stage could be removed in future runs with minimal consequences to the size of the formula dataset. Since the number of corrections is so low, the cost of leaving this stage in is minimal. In short, the correction stage is largely insignificant when using GPT-4o to derive executable code from $\text{\LaTeX }$.

The following prompts were used in the loop for up to three iterations:

\noindent \textbf{System message:}
\begin{tcolorbox}
You are a helpful assistant tasked with extracting mathematical expressions 
from strings and rewriting them in proper SymPy format.\\
Your output must be valid Python code that can be executed without errors.\\
Always focus on processing the original string provided and ensure the response 
contains only the corrected SymPy expression, formatted as executable Python code.
\end{tcolorbox}

\noindent \textbf{User message:}

(Includes the Python error message from a failed execution attempt - e.)
\begin{tcolorbox}
The last attempt was invalid SymPy code: \texttt{\{str(e)[:400]\}}.

Last attempt: \\
\texttt{\{string\}}

Task: \\
1. Extract the expression from the \textbf{original string} below. \\
2. Rewrite it in proper SymPy format as valid, executable Python code. \\
3. Only return the corrected SymPy expression, formatted as valid Python code.

Original string: \\
\texttt{\{original\_string\}}

Process the \textbf{original string} and provide the corrected SymPy expression. \\

Structured output: str.
\end{tcolorbox}

\newpage

\section{Tables of full results}
\label[appendix]{appendix-results-tables}
This section contains the full list of order-2 canonical forms (PCFs) derived by our system, grouped in equivalence classes discovered by running the matching algorithm (per \cref{appendix-algs-graph-growing}). Formula sources are shown in \cref{tab:formula_sources}. Tables \ref{tab:unified-formulas} and \ref{tab:formulas-not-yet-unified} list the formulas that have been unified by the $\pi$ CMF and those that have not yet been unified, respectively. We believe wider scans of the CMF (\cref{append-algs-cmf-trajectories}), with deeper, more complicated trajectories, will unify additional formulas from the latter. Enlarging the $\pi$-CMF to higher dimensions and rank will also likely increase the unification percentage.

We invite the reader to explore the formulas and algorithms leading to these tables, available in our project repository \href{https://github.com/RamanujanMachine/euler2ai}{https://github.com/RamanujanMachine/euler2ai}. Some results are also discussed in the online algorithm demonstration \citep{algorithm_demo}.

The supplementary material accompanying this paper contains two json files:
\begin{itemize}
    \item \texttt{pcfs.json}: The full dataset of order-2 canonical forms (PCFs) harvested, along with their sources. 149 PCFs in total.
    
    Columns: \begin{itemize}
        \item \texttt{ab}: list containing $a_n,b_n$ of the PCF, both as strings.
        \item \texttt{a}: just $a_n$.
        \item \texttt{b}: just $b_n$.
        \item \texttt{limit}: string of the symbolic limit of the PCF in terms of $\pi$.
        \item \texttt{delta}: the empirical irrationality measure of the formula, \cref{def:irrationlity_measure}, as a float.
        \item \texttt{convergence\_rate}: the empirical convergence rate of the formula, \cref{def:convergence_rate}, as a float.
        \item \texttt{sources}: list of dictionaries, each describing a harvested formula that led to the PCF when canonicalized.
        
        Keys: \begin{itemize}
            \item \texttt{type}: `cf' or `series'.
            \item \texttt{formula}: as a string.
            \item \texttt{formula\_limit}: in terms of $\pi$, as a string.
            \item \texttt{id}: the arXiv id of the source paper, string.
            \item \texttt{file}: \TeX{} file in which the formula was located, string.
            \item \texttt{line}: exact line number in the \TeX{} file where the formula was found, int.
            \item \texttt{equation}: the source equation string.
        \end{itemize}
    \end{itemize}
    \item \texttt{cmf\_pcfs.json}: A dataset of order-2 canonical forms (PCFs) sampled from the $\pi$-CMF. 1693 PCFs in total.
    
    Columns:
    
    same as \texttt{pcfs.json} except for \begin{itemize}
        \item \texttt{sources}: list of 2-tuples where each tuple contains a trajectory and a starting point in the CMF, both lists. Generating a trajectory matrix using these (trajectory, starting point) pairs and converting to companion form yields the PCF (\cref{append-algs-cmf-trajectories}). Trajectories are lists of ints. Starting points are lists of strings for easy storage of rational numbers ($\frac{a}{b}$ is stored as `a\_b').
    \end{itemize}
\end{itemize}

\definecolor{gold}{HTML}{DAA520}

\begin{table}
\caption{\textbf{Formula sources}. Summary of papers from the literature containing $\pi$ formulas, showcasing the success rate of UMAPS (colored indices). Canonical forms are shown in gold if connected to at least one other canonical form by UMAPS and in cyan if unified by the CMF; 98\% of papers with $\pi$ formulas had at least one formula connected via canonicalization and UMAPS (gold or cyan) and 50\% included a formula unified by the CMF (cyan). Black indices represent formulas that remain unconnected to to other formulas via UMAPS.
See \cref{tab:unification_results} for statistics in terms of formula counts. Formula indices are consistent with Tables \ref{tab:unified-formulas},\ref{tab:formulas-not-yet-unified}. Canonical forms 150-153 are not included in this table as they are order-3 recurrences (see \cref{appendix-risc-guess-results}). \\ \small{* Tabulated formulas from the data provided in this paper were added manually.}}
\label{tab:formula_sources}
\centering
\resizebox{0.9\textwidth}{!}{%
\begin{tabular}{lP{3cm}lP{3cm}lP{3cm}}
\toprule
\textbf{arXiv source} & \textbf{Canonical forms} & \textbf{arXiv source} & \textbf{Canonical forms} & \textbf{arXiv source} & \textbf{Canonical forms} \\
\midrule
1907.00205* & \textcolor{cyan}{1}, \textcolor{cyan}{2}, \textcolor{cyan}{3}, \textcolor{cyan}{13}, \textcolor{cyan}{20}, \textcolor{cyan}{21}, \textcolor{cyan}{22}, \textcolor{cyan}{28}, \textcolor{cyan}{29}, \textcolor{cyan}{30}, \textcolor{cyan}{31}, \textcolor{cyan}{32}, \textcolor{cyan}{48} & 2305.14995 & \textcolor{cyan}{44} & 1807.07394 & \textcolor{gold}{96} \\
2308.11829 & \textcolor{cyan}{4} & 2307.05607 & \textcolor{cyan}{44} & 1906.07384 & \textcolor{gold}{96} \\
2307.03086 & \textcolor{cyan}{5}, \textcolor{cyan}{6}, \textcolor{cyan}{7}, \textcolor{gold}{87}, \textcolor{gold}{96}, 107, \textcolor{gold}{137} & 2307.08063 & \textcolor{cyan}{44} & 1908.05123 & \textcolor{gold}{96}, \textcolor{gold}{126} \\
1407.8465 & \textcolor{cyan}{7} & 2312.17402 & \textcolor{cyan}{44} & 2305.00498 & \textcolor{gold}{96}, \textcolor{gold}{126}, \textcolor{gold}{137} \\
2204.08275 & \textcolor{cyan}{8}, \textcolor{cyan}{9}, \textcolor{cyan}{10}, \textcolor{cyan}{11}, \textcolor{cyan}{38}, \textcolor{cyan}{39}, \textcolor{cyan}{40}, \textcolor{cyan}{41}, \textcolor{gold}{86}, \textcolor{gold}{87} & 2409.06658 & \textcolor{cyan}{44} & 2310.04642 & \textcolor{gold}{96} \\
2405.02776 & \textcolor{cyan}{12}, 130, 144 & math/0006141 & \textcolor{cyan}{44} & 1008.3171 & 106, \textcolor{gold}{113} \\
2412.12361* & \textcolor{cyan}{14}, \textcolor{cyan}{15}, \textcolor{cyan}{17}, \textcolor{cyan}{19}, \textcolor{cyan}{23}, \textcolor{cyan}{24}, \textcolor{cyan}{25}, \textcolor{cyan}{26}, \textcolor{cyan}{27}, \textcolor{cyan}{33}, \textcolor{cyan}{34}, \textcolor{cyan}{35}, \textcolor{cyan}{36}, \textcolor{cyan}{37} & math/0206179 & \textcolor{cyan}{44} & 2001.08104 & \textcolor{gold}{108}, \textcolor{gold}{137}, \textcolor{gold}{140} \\
0707.2124 & \textcolor{cyan}{16} & math/0402462 & \textcolor{cyan}{44} & 1209.2348 & \textcolor{gold}{113} \\
1507.01703 & \textcolor{cyan}{16}, \textcolor{cyan}{18}, \textcolor{gold}{118} & 2305.14367 & \textcolor{cyan}{45} & 1302.2898 & \textcolor{gold}{113} \\
2112.00622 & \textcolor{cyan}{16} & 2105.11771 & \textcolor{cyan}{49} & 1910.04328 & \textcolor{gold}{113} \\
2211.11484 & \textcolor{cyan}{16}, \textcolor{gold}{96}, \textcolor{gold}{126} & 2206.08284 & \textcolor{cyan}{49} & 2103.07872 & \textcolor{gold}{113}, \textcolor{gold}{115}, 116 \\
2310.03699 & \textcolor{cyan}{42} & 1601.03180 & \textcolor{cyan}{51} & 2203.02631 & \textcolor{gold}{113} \\
1806.03346 & \textcolor{cyan}{43}, \textcolor{cyan}{44}, \textcolor{cyan}{45}, \textcolor{cyan}{46}, \textcolor{cyan}{47}, \textcolor{cyan}{48}, \textcolor{cyan}{50}, \textcolor{cyan}{51}, \textcolor{cyan}{52}, \textcolor{cyan}{59}, \textcolor{cyan}{60}, \textcolor{cyan}{61}, \textcolor{cyan}{62}, \textcolor{cyan}{63}, \textcolor{cyan}{64}, \textcolor{cyan}{65}, \textcolor{cyan}{66}, \textcolor{cyan}{67}, \textcolor{cyan}{68}, \textcolor{cyan}{69}, \textcolor{cyan}{70}, \textcolor{cyan}{71}, \textcolor{cyan}{72} & 1806.08411 & \textcolor{cyan}{51} & 2205.08617 & \textcolor{gold}{113} \\
0707.2122 & \textcolor{cyan}{44} & 1209.5739 & \textcolor{cyan}{53} & 2208.07696 & \textcolor{gold}{113} \\
0707.2500 & \textcolor{cyan}{44} & 2411.00280 & \textcolor{cyan}{55} & 2305.04935 & \textcolor{gold}{113} \\
0708.2564 & \textcolor{cyan}{44} & 1504.01028 & \textcolor{cyan}{73}, \textcolor{gold}{126} & 2108.12796 & \textcolor{gold}{114}, \textcolor{gold}{115} \\
0806.0150 & \textcolor{cyan}{44} & 1804.08210 & \textcolor{cyan}{73}, \textcolor{cyan}{74}, \textcolor{cyan}{78}, \textcolor{cyan}{79} & 0911.2415 & \textcolor{gold}{117} \\
0807.0872 & \textcolor{cyan}{44}, \textcolor{gold}{108}, \textcolor{gold}{113}, \textcolor{gold}{118}, \textcolor{gold}{120}, \textcolor{gold}{126} & 1805.06568 & \textcolor{cyan}{73}, \textcolor{cyan}{74}, \textcolor{cyan}{78}, \textcolor{cyan}{79}, \textcolor{cyan}{81}, 148 & 1103.3893 & \textcolor{gold}{117} \\
1206.3431 & \textcolor{cyan}{44} & 2005.04672 & \textcolor{cyan}{73} & 1110.5308 & \textcolor{gold}{117} \\
1209.3657 & \textcolor{cyan}{44} & 2111.10998 & \textcolor{cyan}{73}, \textcolor{cyan}{75} & 1804.00394 & \textcolor{gold}{117} \\
1301.2584 & \textcolor{cyan}{44} & 2204.04535 & \textcolor{cyan}{73}, \textcolor{cyan}{75}, \textcolor{cyan}{76} & 0708.3307 & \textcolor{gold}{126} \\
1302.0471 & \textcolor{cyan}{44} & 2403.04944 & \textcolor{cyan}{77} & 1210.0269 & \textcolor{gold}{126} \\
1303.1856 & \textcolor{cyan}{44} & 1708.04269 & \textcolor{cyan}{80} & 1302.5984 & \textcolor{gold}{126}, \textcolor{gold}{140}, \textcolor{gold}{146} \\
1310.5610 & \textcolor{cyan}{44} & 0712.1332 & \textcolor{gold}{82} & 1303.6228 & \textcolor{gold}{126} \\
1406.1168 & \textcolor{cyan}{44} & 0911.5665 & \textcolor{gold}{82}, \textcolor{gold}{87} & 1510.02575 & \textcolor{gold}{126} \\
1501.05457 & \textcolor{cyan}{44}, \textcolor{cyan}{53} & 1203.1255 & \textcolor{gold}{82}, \textcolor{gold}{96} & 1808.03213 & \textcolor{gold}{126} \\
1511.08568 & \textcolor{cyan}{44} & 1302.0548 & \textcolor{gold}{82}, \textcolor{gold}{96}, \textcolor{gold}{126} & 1901.07962 & \textcolor{gold}{126} \\
1602.00336 & \textcolor{cyan}{44} & 1610.04839 & \textcolor{gold}{82}, 107 & 1909.10294 & \textcolor{gold}{126} \\
1704.02498 & \textcolor{cyan}{44} & 1611.02217 & \textcolor{gold}{82}, \textcolor{gold}{96}, \textcolor{gold}{126} & 1910.07551 & \textcolor{gold}{126} \\
1801.09181 & \textcolor{cyan}{44} & 1911.05456 & \textcolor{gold}{83}, \textcolor{gold}{84}, \textcolor{gold}{90}, \textcolor{gold}{91}, \textcolor{gold}{97}, \textcolor{gold}{98}, \textcolor{gold}{100}, \textcolor{gold}{103}, \textcolor{gold}{109}, \textcolor{gold}{111}, \textcolor{gold}{127}, \textcolor{gold}{128}, \textcolor{gold}{136}, \textcolor{gold}{138}, \textcolor{gold}{141}, \textcolor{gold}{142} & 1912.00765 & \textcolor{gold}{126} \\
1802.01473 & \textcolor{cyan}{44} & 2110.03651 & \textcolor{gold}{85}, \textcolor{gold}{89}, \textcolor{gold}{92}, \textcolor{gold}{93}, \textcolor{gold}{94}, \textcolor{gold}{95}, \textcolor{gold}{96}, \textcolor{gold}{98}, \textcolor{gold}{101}, \textcolor{gold}{102}, \textcolor{gold}{104}, \textcolor{gold}{105}, \textcolor{gold}{110}, \textcolor{gold}{112}, \textcolor{gold}{122}, \textcolor{gold}{123}, \textcolor{gold}{124}, \textcolor{gold}{126}, \textcolor{gold}{127}, \textcolor{gold}{134}, \textcolor{gold}{135}, \textcolor{gold}{139}, \textcolor{gold}{143} & 2003.02572 & \textcolor{gold}{126} \\
1802.01506 & \textcolor{cyan}{44}, \textcolor{gold}{126} & 1101.0600 & \textcolor{gold}{87} & 2101.09753 & \textcolor{gold}{126} \\
1809.00998 & \textcolor{cyan}{44} & 2210.07238 & \textcolor{gold}{87}, \textcolor{gold}{96}, \textcolor{gold}{137} & 2109.09877 & \textcolor{gold}{126} \\
1812.06643 & \textcolor{cyan}{44} & math/0503507 & \textcolor{gold}{87} & 2203.16047 & \textcolor{gold}{126}, \textcolor{gold}{127}, \textcolor{gold}{128} \\
1906.09629 & \textcolor{cyan}{44}, \textcolor{cyan}{57}, 106, \textcolor{gold}{113}, \textcolor{gold}{121} & 2212.09965 & 88, \textcolor{gold}{99}, \textcolor{gold}{119}, \textcolor{gold}{125}, 133 & 2210.01331 & \textcolor{gold}{126} \\
1907.04089 & \textcolor{cyan}{44} & 0805.2788 & \textcolor{gold}{96}, \textcolor{gold}{126}, \textcolor{gold}{137} & 2301.12932 & \textcolor{gold}{126} \\
1911.12551 & \textcolor{cyan}{44} & 1103.6022 & \textcolor{gold}{96} & 2303.05402 & \textcolor{gold}{126} \\
1912.03214 & \textcolor{cyan}{44}, \textcolor{cyan}{51}, 147 & 1104.0392 & \textcolor{gold}{96}, \textcolor{gold}{126}, \textcolor{gold}{137} & 2310.15207 & \textcolor{gold}{126} \\
1912.03527 & \textcolor{cyan}{44} & 1104.1994 & \textcolor{gold}{96} & 1808.04717 & \textcolor{gold}{128} \\
2009.10774 & \textcolor{cyan}{44} & 1104.3856 & \textcolor{gold}{96}, \textcolor{gold}{126}, \textcolor{gold}{140} & 1501.06413 & 129 \\
2104.12412 & \textcolor{cyan}{44}, \textcolor{gold}{96} & 1410.5514 & \textcolor{gold}{96}, \textcolor{gold}{113} & 2305.00626 & 131, 132, \textcolor{gold}{145} \\
2105.05809 & \textcolor{cyan}{44} & 1504.01976 & \textcolor{gold}{96} & 0704.2438 & \textcolor{gold}{137} \\
2106.04517 & \textcolor{cyan}{44} & 1512.04608 & \textcolor{gold}{96}, \textcolor{gold}{108}, \textcolor{gold}{126} & 1004.4623 & \textcolor{gold}{137} \\
2110.07457 & \textcolor{cyan}{44} & 1604.00193 & \textcolor{gold}{96} & 1802.04616 & \textcolor{gold}{137} \\
2203.09465 & \textcolor{cyan}{44}, \textcolor{cyan}{53}, \textcolor{cyan}{54}, \textcolor{cyan}{56}, 149 & 1604.01106 & \textcolor{gold}{96}, \textcolor{gold}{126} & 0909.2387 & 149 \\
2206.07174 & \textcolor{cyan}{44}, \textcolor{cyan}{51}, \textcolor{cyan}{58} & 1609.07276 & \textcolor{gold}{96} & math/0502582 & 149 \\
2212.13687 & \textcolor{cyan}{44} & 1804.02695 & \textcolor{gold}{96}, \textcolor{gold}{140} &  &  \\
\bottomrule
\end{tabular}
}

\end{table}

\FloatBarrier

\newcolumntype{P}[1]{>{\raggedright\arraybackslash}p{#1}}

\newcolumntype{C}[1]{>{\centering\arraybackslash}p{#1}}

\begin{table}[]
    \centering
    \caption{\textbf{Formulas unified by the $\pi$ Conservative Matrix Field (CMF)} as shown in \cref{fig:cmf-unification}. Clusters of formulas harvested from the literature are given in terms of their corresponding trajectory in the CMF. Dashes indicate a formula's canonical form (CF) is the same as the formula collected. Each canonical form has a numbered row and some are followed by additional source formulas.}
    \label{tab:unified-formulas}
    \resizebox{\textwidth}{!}{%
    \begin{tabular}{ccP{5cm}cP{4cm}cc}

\toprule
Cluster & & Formula & Value & Canonical form (CF) & CF value & Convergence rate \\

\midrule \\
\makecell{(1, 1, 2) \\ $\delta = -0.21$} & & & & & & \\
& 1 & PCF($2 n + 1$,$n^{2}$) & $\frac{4}{\pi}$ & - & - & 1.76 \\
& 2 & PCF($2 n + 3$,$n^{2} + 2 n$) & $\frac{4}{-2 + \pi}$ & PCF($2 n + 3$,$n^{2} + 2 n$) & $\frac{4}{-2 + \pi}$ & 1.76 \\
& 3 & PCF($2 n + 5$,$n^{2} + 4 n$) & $\frac{8}{-8 + 3 \pi}$ & PCF($2 n + 5$,$n^{2} + 4 n$) & $\frac{8}{-8 + 3 \pi}$ & 1.76 \\
& 4 & PCF($- 48 n^{3} - 108 n^{2} - 70 n - 12$,$- 64 n^{6} - 96 n^{5} + 12 n^{4} + 52 n^{3} + 15 n^{2}$) & $\frac{10}{-4 + \pi}$ & PCF($- 48 n^{3} - 108 n^{2} - 70 n - 12$,$- 64 n^{6} - 96 n^{5} + 12 n^{4} + 52 n^{3} + 15 n^{2}$) & $\frac{10}{-4 + \pi}$ & 3.53 \\

\midrule \\
\makecell{(3, 1, 1) \\ $\delta = -0.45$} & & & & & & \\
& 5 & $\sum_{k=1}^{\infty} \frac{\left(-4\right)^{k} \left(280 k - 51\right) {\binom{2 k}{k}}}{k {\binom{3 k}{k}} {\binom{6 k}{3 k}}}$ & $- 6 \pi - 10$ & PCF($29120 n^{3} + 110616 n^{2} + 132106 n + 49845$,$33868800 n^{6} + 173940480 n^{5} + 322863792 n^{4} + 252606312 n^{3} + 62976828 n^{2} - 9173682 n - 2336310$) & $\frac{229050 + 137430 \pi}{154 - 45 \pi}$ & 3.29 \\
& 6 & $\sum_{k=1}^{\infty} \frac{\left(-4\right)^{k} \left(952 k - 201\right) {\binom{2 k}{k}}}{{\binom{3 k}{k}} {\binom{6 k}{3 k}}}$ & $- 15 \pi - 42$ & PCF($99008 n^{3} + 369416 n^{2} + 427970 n + 153045$,$391523328 n^{6} + 2379573504 n^{5} + 5231138352 n^{4} + 4800421464 n^{3} + 1334845140 n^{2} - 258125598 n - 61614540$) & $\frac{12874680 + 4598100 \pi}{872 - 225 \pi}$ & 3.29 \\
& 7 & $\sum_{k=1}^{\infty} \frac{\left(-4\right)^{k} \left(7 k - 1\right) {\binom{2 k}{k}}}{k \left(2 k - 1\right) {\binom{3 k}{k}} {\binom{6 k}{3 k}}}$ & $- \frac{\pi}{4}$ & PCF($728 n^{3} + 2822 n^{2} + 3469 n + 1360$,$21168 n^{6} + 89208 n^{5} + 130380 n^{4} + 78570 n^{3} + 15162 n^{2} - 1458 n - 390$) & $\frac{130 \pi}{16 - 5 \pi}$ & 3.30 \\

\midrule \\
\makecell{(2, 1, 1) \\ $\delta = -0.48$} & & & & & & \\
& 8 & $\sum_{k=1}^{\infty} \frac{\left(-2\right)^{k} k \left(126 k + 29\right)}{{\binom{4 k}{2 k}}}$ & $- \frac{65}{3} - 2 \pi$ & PCF($1764 n^{4} + 8596 n^{3} + 14767 n^{2} + 10143 n + 2053$,$508032 n^{8} + 4552128 n^{7} + 16355000 n^{6} + 29955532 n^{5} + 29379250 n^{4} + 14855861 n^{3} + 3459708 n^{2} + 293364 n$) & $\frac{3372 \pi + 36530}{15 - \pi}$ & 2.07 \\
& 9 & $\sum_{k=1}^{\infty} \frac{\left(-2\right)^{k - 1} \left(6 k - 1\right)}{k \left(2 k - 1\right) {\binom{4 k}{2 k}}}$ & $\frac{\pi}{4}$ & PCF($84 n^{3} + 328 n^{2} + 411 n + 164$,$1152 n^{6} + 4800 n^{5} + 6968 n^{4} + 4220 n^{3} + 838 n^{2} - 95 n - 33$) & $\frac{33 \pi}{10 - 3 \pi}$ & 2.08 \\
& 10 & $\sum_{k=0}^{\infty} \frac{\left(-2\right)^{k} \left(30 k - 7\right)}{{\binom{4 k}{2 k}}}$ & $- \frac{32}{3} - \frac{\pi}{2}$ & PCF($420 n^{3} + 232 n^{2} - 121 n - 44$,$28800 n^{6} + 960 n^{5} - 57352 n^{4} - 8996 n^{3} + 27766 n^{2} + 4697 n - 2553$) & $\frac{-1472 - 69 \pi}{3 \pi + 22}$ & 2.08 \\
& 11 & $\sum_{k=1}^{\infty} \frac{\left(-2\right)^{k} \left(18 k + 1\right)}{\left(2 k - 1\right) {\binom{4 k}{2 k}}}$ & $- \frac{3 \pi}{2} - 1$ & PCF($252 n^{3} + 1004 n^{2} + 1321 n + 591$,$10368 n^{6} + 58176 n^{5} + 116024 n^{4} + 100084 n^{3} + 38458 n^{2} + 5843 n + 222$) & $\frac{444 + 666 \pi}{32 - 9 \pi}$ & 2.08 \\
& 12 & \makecell{\\ $\sum_{j=0}^{\infty} \frac{64^{- j} \left(3 j + 2\right) \left(112 j^{2} + 144 j + 41\right) {\left(\frac{1}{4}\right)}_{\left(j\right)} {\left(\frac{1}{2}\right)}_{\left(j\right)} {\left(\frac{3}{4}\right)}_{\left(j\right)} j!}{{\left(\frac{9}{8}\right)}_{\left(j\right)} {\left(\frac{11}{8}\right)}_{\left(j\right)} {\left(\frac{13}{8}\right)}_{\left(j\right)} {\left(\frac{15}{8}\right)}_{\left(j\right)}}$} & $\frac{105 \pi}{4}$ & PCF($1397760 n^{7} + 11104768 n^{6} + 36657792 n^{5} + 65007040 n^{4} + 66685140 n^{3} + 39437842 n^{2} + 12401823 n + 1591920$,$- 29595009024 n^{14} - 248739004416 n^{13} - 868936056832 n^{12} - 1606677430272 n^{11} - 1605350604800 n^{10} - 651457732608 n^{9} + 282238639104 n^{8} + 446685049344 n^{7} + 175738683712 n^{6} - 1178920416 n^{5} - 20395524172 n^{4} - 5128349922 n^{3} - 106481538 n^{2} + 98454690 n + 8419950$) & $\frac{935550 \pi}{-328 + 105 \pi}$ & 4.16 \\

\end{tabular}
}
\end{table}

\begin{table*}[]
    \centering
    \resizebox{\textwidth}{!}{%
    \begin{tabular}{ccP{5cm}cP{4cm}cc}

\toprule
Cluster & & Formula & Value & Canonical form (CF) & CF value & Convergence rate \\

\midrule \\
\makecell{(1, 0, 0) \\ $\delta = -0.65$} & & & & & & \\
& 13 & PCF($3 n + 1$,$- 2 n^{2} + n$) & $\frac{2}{\pi}$ & - & - & 0.69 \\
& 14 & PCF($3 n + 2$,$- 2 n^{2} - n + 1$) & $\frac{\pi + 4}{2 + \pi}$ & - & - & 0.69 \\
& 15 & PCF($3 n + 4$,$- 2 n^{2} - 3 n + 2$) & $\frac{12 + 4 \pi}{\pi + 4}$ & - & - & 0.69 \\
& 16 & $\sum_{i=1}^{\infty} \frac{2^{i}}{i {\binom{2 i}{i}}}$ & $\frac{\pi}{2}$ & PCF($3 n + 4$,$- 2 n^{2} - 3 n - 1$) & $\frac{\pi}{-2 + \pi}$ & 0.69 \\
& & $\sum_{k=1}^{\infty} \frac{2^{k}}{k {\binom{2 k}{k}}}$ & $\frac{\pi}{2}$ & & &  \\
& & $\sum_{k=0}^{\infty} \frac{k!}{\left(2 k + 1\right)!!}$ & $\frac{\pi}{2}$ & & &  \\
& & $\sum_{j=0}^{\infty} \frac{2^{j + 1}}{\left(2 j + 1\right) {\binom{2 j}{j}}}$ & $\pi$ & & &  \\
& 17 & PCF($3 n + 5$,$- 2 n^{2} - 5 n + 3$) & $\frac{84 + 27 \pi}{6 \pi + 20}$ & - & - & 0.69 \\
& 18 & $\sum_{i=1}^{\infty} \frac{2^{i}}{\left(2 i + 1\right) {\binom{2 i}{i}}}$ & $-1 + \frac{\pi}{2}$ & PCF($3 n + 7$,$- 2 n^{2} - 7 n - 6$) & $\frac{12 - 6 \pi}{8 - 3 \pi}$ & 0.69 \\
& 19 & PCF($3 n^{2} + 9 n + 5$,$- 2 n^{4} - 9 n^{3} - 9 n^{2} + n + 3$) & $\frac{102 \pi + 357}{85 + 34 \pi}$ & - & - & 0.69 \\
& 20 & PCF($3 n$,$- 2 n^{2} + 3 n$) & $\frac{2}{2 + \pi}$ & PCF($3 n$,$- 2 n^{2} + 3 n$) & $\frac{2}{2 + \pi}$ & 0.70 \\
& 21 & PCF($3 n + 3$,$- 2 n^{2} + n$) & $\frac{4}{-8 + 3 \pi}$ & - & - & 0.70 \\
& 22 & PCF($3 n + 3$,$- 2 n^{2} - n$) & $- \frac{2}{-4 + \pi}$ & - & - & 0.70 \\
& 23 & PCF($3 n + 4$,$- 2 n^{2} + n$) & $\frac{12}{-44 + 15 \pi}$ & - & - & 0.70 \\
& 24 & PCF($3 n + 4$,$- 2 n^{2} - n$) & $- \frac{2}{-10 + 3 \pi}$ & - & - & 0.70 \\
& 25 & PCF($3 n + 5$,$- 2 n^{2} + n$) & $\frac{48}{-320 + 105 \pi}$ & - & - & 0.70 \\
& 26 & PCF($3 n + 5$,$- 2 n^{2} - n$) & $- \frac{4}{-48 + 15 \pi}$ & - & - & 0.70 \\
& 27 & PCF($3 n + 5$,$- 2 n^{2} - 3 n$) & $\frac{6}{-8 + 3 \pi}$ & - & - & 0.70 \\
& 28 & PCF($3 n + 3$,$- 2 n^{2} - n + 1$) & $1 + \frac{\pi}{2}$ & - & - & 0.70 \\
& 29 & PCF($3 n + 4$,$- 2 n^{2} - n + 1$) & $- \frac{\pi}{-4 + \pi}$ & - & - & 0.70 \\
& 30 & PCF($3 n + 5$,$- 2 n^{2} - n + 1$) & $\frac{4 - 3 \pi}{-20 + 6 \pi}$ & - & - & 0.70 \\
& 31 & PCF($3 n + 5$,$- 2 n^{2} - 3 n + 2$) & $\frac{2 \pi + 8}{\pi}$ & - & - & 0.70 \\
& 32 & PCF($3 n + 6$,$- 2 n^{2} - 3 n + 2$) & $\frac{8}{-8 + 3 \pi}$ & - & - & 0.70 \\
& 33 & PCF($3 n + 6$,$- 2 n^{2} - 5 n + 3$) & $\frac{15 \pi + 48}{8 + 3 \pi}$ & - & - & 0.70 \\
& 34 & PCF($3 n + 7$,$- 2 n^{2} - 3 n + 2$) & $\frac{32 - 6 \pi}{-64 + 21 \pi}$ & - & - & 0.70 \\
& 35 & PCF($3 n + 7$,$- 2 n^{2} - 5 n + 3$) & $3 + \frac{9 \pi}{8}$ & - & - & 0.70 \\
& 36 & PCF($3 n + 8$,$- 2 n^{2} - 5 n + 3$) & $- \frac{9 \pi}{-32 + 9 \pi}$ & - & - & 0.70 \\
& 37 & PCF($3 n + 9$,$- 2 n^{2} - 5 n + 3$) & $\frac{32 - 15 \pi}{-96 + 30 \pi}$ & - & - & 0.70 \\
& 38 & $\sum_{k=0}^{\infty} \frac{4^{k} k}{{\binom{4 k}{2 k}}}$ & $\frac{2}{3} + \frac{\pi}{4}$ & PCF($20 n^{3} + 86 n^{2} + 123 n + 59$,$- 64 n^{6} - 416 n^{5} - 1004 n^{4} - 1090 n^{3} - 504 n^{2} - 72 n$) & $\frac{64}{\pi} + 24$ & 1.38 \\
& 39 & $\sum_{k=1}^{\infty} \frac{4^{k} \left(12 k^{2} + 1\right)}{{\binom{4 k}{2 k}}}$ & $\frac{50}{3} + \frac{11 \pi}{2}$ & PCF($240 n^{4} + 1320 n^{3} + 2792 n^{2} + 2726 n + 1043$,$- 9216 n^{8} - 78336 n^{7} - 265920 n^{6} - 456096 n^{5} - 413728 n^{4} - 195792 n^{3} - 53804 n^{2} - 13194 n - 1764$) & $\frac{19600 + 6468 \pi}{16 + 11 \pi}$ & 1.38 \\
& 40 & $\sum_{k=1}^{\infty} \frac{4^{k} \left(3 k - 1\right)}{k \left(2 k - 1\right) {\binom{4 k}{2 k}}}$ & $\frac{\pi}{2}$ & PCF($60 n^{3} + 214 n^{2} + 237 n + 80$,$- 576 n^{6} - 2208 n^{5} - 2860 n^{4} - 1330 n^{3} + 46 n^{2} + 178 n + 30$) & $\frac{30 \pi}{-8 + 3 \pi}$ & 1.39 \\
& 41 & $\sum_{k=1}^{\infty} \frac{4^{k} \left(12 k - 5\right)}{\left(2 k - 1\right) {\binom{4 k}{2 k}}}$ & $2 + \frac{3 \pi}{2}$ & PCF($240 n^{3} + 884 n^{2} + 994 n + 321$,$- 9216 n^{6} - 43008 n^{5} - 65536 n^{4} - 31904 n^{3} + 4900 n^{2} + 6914 n + 1140$) & $\frac{912 + 684 \pi}{-16 + 9 \pi}$ & 1.39 \\

\midrule \\
\makecell{(-1, 3, 3) \\ $\delta = -0.91$} & & & & & & \\
& 42 & $\sum_{k=1}^{\infty} \frac{16^{k} \left(22 k^{2} - 17 k + 3\right) {\binom{4 k}{2 k}}}{k \left(4 k - 3\right) \left(4 k - 1\right) {\binom{3 k}{k}} {\binom{6 k}{3 k}}}$ & $2 \pi$ & PCF($3784 n^{4} + 15292 n^{3} + 21230 n^{2} + 11981 n + 2304$,$- 3345408 n^{8} - 13229568 n^{7} - 16699200 n^{6} - 4912608 n^{5} + 4514544 n^{4} + 2792280 n^{3} + 27384 n^{2} - 173304 n - 20520$) & $\frac{6840 \pi}{-32 + 15 \pi}$ & 0.52 \\

\end{tabular}
}
\end{table*}

\begin{table*}[]
    \centering
    \resizebox{\textwidth}{!}{%
    \begin{tabular}{ccP{5cm}cP{4cm}cc}

\toprule
Cluster & & Formula & Value & Canonical form (CF) & CF value & Convergence rate \\

\midrule \\
\makecell{(1, 1, 1) \\ $\delta = -1.00$} & & & & & & \\
& 43 & PCF($4$,$4 n^{2} - 1$) & $\frac{2 + \pi}{-2 + \pi}$ & - & - & 0.00 \\
& 44 & $\sum_{k=0}^{\infty} \frac{\left(-1\right)^{k}}{2 k + 1}$ & $\frac{\pi}{4}$ & PCF($2$,$4 n^{2} + 4 n + 1$) & $\frac{\pi}{4 - \pi}$ & 0.00 \\
& & $\sum_{l=0}^{\infty} \frac{\left(-1\right)^{l}}{2 l + 1}$ & $\frac{\pi}{4}$ & & &  \\
& & $\sum_{m=0}^{\infty} \frac{\left(-1\right)^{m}}{2 m + 1}$ & $\frac{\pi}{4}$ & & &  \\
& & $\sum_{n=0}^{\infty} \frac{\left(-1\right)^{n}}{2 n + 1}$ & $\frac{\pi}{4}$ & & &  \\
& & $\sum_{n=1}^{\infty} \frac{\left(-1\right)^{n}}{2 n - 1}$ & $- \frac{\pi}{4}$ & & &  \\
& & $\sum_{j=1}^{\infty} \frac{\left(-1\right)^{j - 1}}{2 j - 1}$ & $\frac{\pi}{4}$ & & &  \\
& & $\sum_{k=1}^{\infty} \frac{\left(-1\right)^{k + 1}}{2 k - 1}$ & $\frac{\pi}{4}$ & & &  \\
& & $\sum_{n=1}^{\infty} \frac{\left(-1\right)^{n + 1}}{2 n - 1}$ & $\frac{\pi}{4}$ & & &  \\
& & $\sum_{n=1}^{\infty} \frac{\left(-1\right)^{n - 1}}{2 n - 1}$ & $\frac{\pi}{4}$ & & &  \\
& & $\sum_{\nu=0}^{\infty} \frac{\left(-1\right)^{\nu}}{2 \nu + 1}$ & $\frac{\pi}{4}$ & & &  \\
& & $\sum_{p=0}^{\infty} \frac{\left(-1\right)^{p}}{p + \frac{1}{2}}$ & $\frac{\pi}{2}$ & & &  \\
& 45 & $\sum_{k=1}^{\infty} \frac{\left(-1\right)^{k}}{\left(2 k - 1\right) \left(2 k + 1\right)}$ & $\frac{1}{2} - \frac{\pi}{4}$ & PCF($4$,$4 n^{2} + 8 n + 3$) & $\frac{-6 + 3 \pi}{10 - 3 \pi}$ & 0.00 \\
& & $\sum_{n=1}^{\infty} \frac{\left(-1\right)^{n - 1}}{\left(2 n - 1\right) \left(2 n + 1\right)}$ & $- \frac{1}{2} + \frac{\pi}{4}$ & & &  \\
& 46 & PCF($4$,$4 n^{2} - 8 n + 3$) & $\frac{3 \pi + 10}{2 + \pi}$ & - & - & 0.00 \\
& 47 & PCF($6$,$4 n^{2} + 4 n - 3$) & $\frac{3 \pi}{-8 + 3 \pi}$ & - & - & 0.00 \\
& 48 & PCF($6$,$4 n^{2} - 4 n + 1$) & $3 + \pi$ & - & - & 0.00 \\
& & $\sum_{n=1}^{\infty} \frac{\left(-1\right)^{n+1}}{n(n + 1)(2n + 1)}$ & $\pi - 3$ & & &  \\
& 49 & $\sum_{n=1}^{\infty} \frac{\left(-1\right)^{n + 1}}{2 n + 1}$ & $1 - \frac{\pi}{4}$ & PCF($2$,$4 n^{2} + 12 n + 9$) & $\frac{-36 + 9 \pi}{8 - 3 \pi}$ & 0.00 \\
& & $\sum_{\ell=1}^{\infty} \frac{\left(-1\right)^{\ell}}{2 \ell + 1}$ & $-1 + \frac{\pi}{4}$ & & &  \\
& 50 & $\sum_{n=1}^{\infty} \frac{\left(-1\right)^{n - 1}}{\left(2 n - 1\right) \left(2 n + 1\right) \left(2 n + 3\right)}$ & $- \frac{1}{3} + \frac{\pi}{8}$ & PCF($6$,$4 n^{2} + 12 n + 5$) & $\frac{-40 + 15 \pi}{48 - 15 \pi}$ & 0.00 \\
& 51 & $\sum_{k=2}^{\infty} \frac{\left(-1\right)^{k + 1}}{k \left(k - 1\right) \left(2 k - 1\right)}$ & $3 - \pi$ & PCF($6$,$4 n^{2} + 12 n + 9$) & $\frac{-27 + 9 \pi}{19 - 6 \pi}$ & 0.00 \\
& & $\sum_{k=1}^{\infty} \frac{\left(-1\right)^{k + 1}}{2 k \left(2 k + 1\right) \left(2 k + 2\right)}$ & $- \frac{3}{4} + \frac{\pi}{4}$ & & &  \\
& & $\sum_{n=1}^{\infty} \frac{\left(-1\right)^{n - 1}}{2 n \left(2 n + 1\right) \left(2 n + 2\right)}$ & $- \frac{3}{4} + \frac{\pi}{4}$ & & &  \\
& & $\sum_{n=1}^{\infty} \left(-1\right)^{n + 1} \left(- \frac{4}{2 n + 1} + \frac{1}{n + 1} + \frac{1}{n}\right)$ & $-3 + \pi$ & & &  \\
& & $\sum_{n=1}^{\infty} \frac{2 \left(-1\right)^{n} \left(\frac{1}{n + 1} + \frac{1}{n}\right)}{\left(2 n + 1\right)^{2}}$ & $6 - 2 \pi$ & & &  \\
& 52 & $\sum_{n=1}^{\infty} \frac{\left(-1\right)^{n - 1}}{\left(2 n - 1\right) \left(2 n + 1\right) \left(2 n + 3\right) \left(2 n + 5\right)}$ & $- \frac{11}{90} + \frac{\pi}{24}$ & PCF($8$,$4 n^{2} + 16 n + 7$) & $\frac{-308 + 105 \pi}{332 - 105 \pi}$ & 0.00 \\
& 53 & $\sum_{n=1}^{\infty} \frac{1}{\left(4 n - 3\right) \left(4 n - 1\right)}$ & $\frac{\pi}{8}$ & PCF($32 n^{2} + 64 n + 38$,$- 256 n^{4} - 512 n^{3} - 352 n^{2} - 96 n - 9$) & $\frac{9 \pi}{-8 + 3 \pi}$ & 0.00 \\
& & $\sum_{k=1}^{\infty} \left(- \frac{1}{4 k - 1} + \frac{1}{4 k - 3}\right)$ & $\frac{\pi}{4}$ & & &  \\
& & $\sum_{n=1}^{\infty} \left(- \frac{1}{4 n - 1} + \frac{1}{4 n - 3}\right)$ & $\frac{\pi}{4}$ & & &  \\
& 54 & $\sum_{n=1}^{\infty} \frac{1}{\left(4 n - 3\right) \left(4 n - 1\right) \left(4 n + 1\right)}$ & $- \frac{1}{8} + \frac{\pi}{16}$ & PCF($32 n^{2} + 80 n + 66$,$- 256 n^{4} - 768 n^{3} - 800 n^{2} - 336 n - 45$) & $\frac{90 - 45 \pi}{46 - 15 \pi}$ & 0.00 \\
& 55 & $\sum_{n=1}^{\infty} \frac{1}{16 n^{2} - 1}$ & $\frac{1}{2} - \frac{\pi}{8}$ & PCF($32 n^{2} + 96 n + 78$,$- 256 n^{4} - 1024 n^{3} - 1504 n^{2} - 960 n - 225$) & $\frac{900 - 225 \pi}{52 - 15 \pi}$ & 0.00 \\
& 56 & $\sum_{n=1}^{\infty} \frac{1}{\left(2 n - 1\right) \left(2 n + 1\right) \left(4 n - 1\right) \left(4 n + 1\right)}$ & $- \frac{1}{2} + \frac{\pi}{6}$ & PCF($32 n^{2} + 96 n + 110$,$- 256 n^{4} - 1024 n^{3} - 1504 n^{2} - 960 n - 225$) & $\frac{225 - 75 \pi}{47 - 15 \pi}$ & 0.00 \\
& 57 & $\sum_{k=1}^{\infty} \left(- \frac{1}{4 k + 3} + \frac{1}{4 k + 1}\right)$ & $- \frac{2}{3} + \frac{\pi}{4}$ & PCF($32 n^{2} + 128 n + 134$,$- 256 n^{4} - 1536 n^{3} - 3424 n^{2} - 3360 n - 1225$) & $\frac{9800 - 3675 \pi}{304 - 105 \pi}$ & 0.00 \\
& 58 & $\sum_{n=1}^{\infty} \frac{3}{n \left(n + 1\right) \left(4 n + 1\right) \left(4 n + 3\right)}$ & $\frac{19}{3} - 2 \pi$ & PCF($32 n^{2} + 128 n + 166$,$- 256 n^{4} - 1536 n^{3} - 3424 n^{2} - 3360 n - 1225$) & $\frac{23275 - 7350 \pi}{1321 - 420 \pi}$ & 0.00 \\
& 59 & PCF($8$,$4 n^{2} - 1$) & $\frac{\pi + 4}{4 - \pi}$ & PCF($8$,$4 n^{2} - 1$) & $\frac{\pi + 4}{4 - \pi}$ & 0.01 \\
& 60 & PCF($8$,$4 n^{2} + 8 n - 5$) & $\frac{20 - 15 \pi}{44 - 15 \pi}$ & - & - & 0.01 \\
& 61 & PCF($10$,$4 n^{2} + 4 n - 3$) & $\frac{6}{10 - 3 \pi}$ & - & - & 0.01 \\
& 62 & PCF($10$,$4 n^{2} - 4 n + 1$) & $5 + \frac{16}{\pi}$ & - & - & 0.01 \\
& 63 & $\sum_{n=1}^{\infty} \frac{\left(-1\right)^{n - 1}}{n \left(2 n + 1\right) \left(2 n + 2\right) \left(2 n + 3\right) \left(4 n - 2\right)}$ & $\frac{5}{36} - \frac{\pi}{24}$ & PCF($10$,$4 n^{2} + 12 n + 5$) & $\frac{-50 + 15 \pi}{94 - 30 \pi}$ & 0.01 \\
& 64 & PCF($10$,$4 n^{2} + 12 n - 7$) & $\frac{224 - 105 \pi}{320 - 105 \pi}$ & - & - & 0.01 \\
& 65 & $\sum_{n=1}^{\infty} \frac{\left(-1\right)^{n - 1}}{\left(2 n - 1\right) \left(2 n + 1\right) \left(2 n + 3\right) \left(2 n + 5\right) \left(2 n + 7\right)}$ & $- \frac{2}{63} + \frac{\pi}{96}$ & PCF($10$,$4 n^{2} + 20 n + 9$) & $\frac{-960 + 315 \pi}{992 - 315 \pi}$ & 0.01 \\
& 66 & $\sum_{n=2}^{\infty} \frac{\left(-1\right)^{n}}{\left(2 n + 1\right) \left(2 n + 2\right) \left(2 n + 3\right) \left(2 n + 4\right) \left(2 n + 5\right)}$ & $\frac{11}{84} - \frac{\pi}{24}$ & PCF($10$,$4 n^{2} + 28 n + 45$) & $\frac{-14850 + 4725 \pi}{3958 - 1260 \pi}$ & 0.01 \\
& 67 & $\sum_{n=1}^{\infty} \frac{\left(-1\right)^{n - 1}}{\left(2 n - 1\right)^{2} \left(2 n + 1\right)^{2} \left(2 n + 3\right)^{2} \left(2 n + 5\right)^{2}}$ & $- \frac{7}{4050} + \frac{\pi}{1728}$ & PCF($32 n + 80$,$16 n^{4} + 128 n^{3} + 312 n^{2} + 224 n + 49$) & $\frac{-10976 + 3675 \pi}{11552 - 3675 \pi}$ & 0.01 \\
& 68 & $\sum_{n=1}^{\infty} \frac{\left(-1\right)^{n - 1}}{\left(2 n - 1\right)^{2} \left(2 n + 1\right)^{2} \left(2 n + 3\right)^{2} \left(2 n + 5\right)^{2} \left(2 n + 7\right)^{2} \left(2 n + 9\right)^{2}}$ & & & & \\
& & & $\frac{41}{44651250} - \frac{\pi}{3456000}$ & PCF($48 n + 168$,$16 n^{4} + 192 n^{3} + 664 n^{2} + 528 n + 121$) & $\frac{-2540032 + 800415 \pi}{2514432 - 800415 \pi}$ & 0.01 \\
& 69 & PCF($24$,$4 n^{2} - 1$) & $\frac{75 \pi + 256}{256 - 75 \pi}$ & - & - & 0.02 \\
& 70 & $\sum_{n=1}^{\infty} \frac{\left(-1\right)^{n - 1}}{\left(2 n - 1\right)^{2} \left(2 n + 1\right)^{2} \left(2 n + 3\right)^{2} \left(2 n + 5\right)^{2} \left(2 n + 7\right)^{2} \left(2 n + 9\right)^{2} \left(2 n + 11\right)^{2} \left(2 n + 13\right)^{2}}$ & & & & \\
& & & $- \frac{14789}{134221791453750} + \frac{\pi}{28449792000}$ & PCF($64 n + 288$,$16 n^{4} + 256 n^{3} + 1144 n^{2} + 960 n + 225$) & $\frac{-181727232 + 57972915 \pi}{182128640 - 57972915 \pi}$ & 0.02 \\
& & *  & & & & \\

\end{tabular}
}
\end{table*}

\begin{table*}[]
    \centering
    \resizebox{!}{7cm}{%
    \begin{tabular}{ccP{5cm}cP{4cm}cc}

\toprule
Cluster & & Formula & Value & Canonical form (CF) & CF value & Convergence rate \\

\midrule \\
\makecell{(0, 0, 1) \\ $\delta = -1.00$} & & & & & & \\
& 71 & PCF($2$,$n^{2}$) & $\frac{2}{4 - \pi}$ & - & - & 0.00 \\
& 72 & PCF($1$,$n(n+1)$) & $\frac{2}{\pi - 2}$ & - & - & 0.00 \\
& 73 & $\sum_{n=0}^{\infty} \frac{4^{- 2 n} {\binom{2 n}{n}}^{2}}{n + 1}$ & $\frac{4}{\pi}$ & PCF($8 n^{2} + 16 n + 9$,$- 16 n^{4} - 32 n^{3} - 20 n^{2} - 4 n$) & $\frac{4}{4 - \pi}$ & 0.00 \\
& & $\sum_{n=0}^{\infty} \frac{2^{- 4 n - 3} {\binom{2 n}{n}} {\binom{2 n + 2}{n + 1}}}{2 n + 1}$ & $\frac{1}{\pi}$ & & &  \\
& & $\sum_{n=0}^{\infty} \frac{\left({\left(\frac{1}{2}\right)}_{\left(n\right)}\right)^{2}}{n! \left(n + 1\right)!}$ & $\frac{4}{\pi}$ & & &  \\
& & $\sum_{k=0}^{\infty} \frac{\left({\left(\frac{1}{2}\right)}_{\left(k\right)}\right)^{2}}{\left(k + 1\right) k!^{2}}$ & $\frac{4}{\pi}$ & & &  \\
& & $\sum_{n=0}^{\infty} \frac{4 \left({\left(\frac{1}{2}\right)}_{\left(n\right)}\right)^{2}}{{2}^{\left(n\right)} n!}$ & $\frac{16}{\pi}$ & & &  \\
& & $\sum_{n=0}^{\infty} \frac{\left({\left(\frac{1}{2}\right)}_{\left(n\right)}\right)^{2}}{\left(n + 1\right) n!^{2}}$ & $\frac{4}{\pi}$ & & &  \\
& 74 & $\sum_{n=0}^{\infty} \frac{\left({\left(\frac{1}{2}\right)}_{\left(n\right)}\right)^{2}}{n! \left(n + 2\right)!}$ & $\frac{16}{9 \pi}$ & PCF($8 n^{2} + 20 n + 13$,$- 16 n^{4} - 48 n^{3} - 36 n^{2} - 8 n$) & $\frac{32}{32 - 9 \pi}$ & 0.00 \\
& & $\sum_{n=0}^{\infty} \frac{\left({\left(\frac{1}{2}\right)}_{\left(n\right)}\right)^{2}}{\left(n + 1\right) \left(n + 2\right) n!^{2}}$ & $\frac{16}{9 \pi}$ & & &  \\
& 75 & $\sum_{n=0}^{\infty} \frac{4^{- 2 n} {\binom{2 n}{n}}^{2}}{\left(n + 1\right)^{2}}$ & $-4 + \frac{16}{\pi}$ & PCF($8 n^{2} + 20 n + 17$,$- 16 n^{4} - 48 n^{3} - 52 n^{2} - 24 n - 4$) & $\frac{-16 + 4 \pi}{-16 + 5 \pi}$ & 0.00 \\
& & $\sum_{n=0}^{\infty} \frac{2^{- 4 n - 4} {\binom{2 n}{n}} {\binom{2 n + 2}{n + 1}}}{\left(n + 1\right) \left(2 n + 1\right)}$ & $\frac{4 - \pi}{2 \pi}$ & & &  \\
& 76 & $\sum_{n=1}^{\infty} \frac{2^{4 n}}{n^{2} \left(2 n + 1\right) {\binom{2 n}{n}}^{2}}$ & $-4 + 2 \pi$ & PCF($8 n^{2} + 24 n + 19$,$- 16 n^{4} - 64 n^{3} - 92 n^{2} - 56 n - 12$) & $\frac{24 - 12 \pi}{8 - 3 \pi}$ & 0.00 \\
& 77 & $\sum_{i=1}^{\infty} \frac{\left(2 i - 1\right)!!^{2}}{\left(i + 1\right) \left(2 i - 1\right) \left(2 i\right)!!^{2}}$ & $\frac{- \frac{8}{3} + \pi}{\pi}$ & PCF($8 n^{2} + 28 n + 27$,$- 16 n^{4} - 80 n^{3} - 140 n^{2} - 100 n - 24$) & $\frac{192 - 72 \pi}{64 - 21 \pi}$ & 0.00 \\
& 78 & $\sum_{n=1}^{\infty} \frac{\left({\left(\frac{1}{2}\right)}_{\left(n\right)}\right)^{2}}{\left(n + 1\right)!^{2}}$ & $-5 + \frac{16}{\pi}$ & PCF($8 n^{2} + 36 n + 45$,$- 16 n^{4} - 112 n^{3} - 292 n^{2} - 336 n - 144$) & $\frac{-2304 + 720 \pi}{-256 + 81 \pi}$ & 0.00 \\
& 79 & $\sum_{n=1}^{\infty} \frac{\left({\left(\frac{1}{2}\right)}_{\left(n\right)}\right)^{2}}{\left(n + 1\right)! \left(n + 2\right)!}$ & $\frac{256 - 81 \pi}{18 \pi}$ & PCF($8 n^{2} + 40 n + 57$,$- 16 n^{4} - 128 n^{3} - 372 n^{2} - 468 n - 216$) & $\frac{-18432 + 5832 \pi}{-2048 + 651 \pi}$ & 0.00 \\
& & $\sum_{n=1}^{\infty} \frac{\left({\left(\frac{1}{2}\right)}_{\left(n\right)}\right)^{2}}{\left(n + 2\right) \left(n + 1\right)!^{2}}$ & $\frac{256 - 81 \pi}{18 \pi}$ & & &  \\
& 80 & $\sum_{n=1}^{\infty} \frac{16^{n}}{\left(2 n + 1\right)^{2} \left(2 n + 3\right)^{2} {\binom{2 n}{n}}^{2}}$ & $- \frac{28}{9} + \pi$ & PCF($8 n^{2} + 44 n + 65$,$- 16 n^{4} - 144 n^{3} - 484 n^{2} - 720 n - 400$) & $\frac{11200 - 3600 \pi}{704 - 225 \pi}$ & 0.00 \\
& 81 & $\sum_{n=1}^{\infty} \frac{\left({\left(\frac{1}{2}\right)}_{\left(n\right)}\right)^{2}}{\left(n + 2\right)!^{2}}$ & $\frac{2048 - 651 \pi}{108 \pi}$ & PCF($8 n^{2} + 44 n + 73$,$- 16 n^{4} - 144 n^{3} - 468 n^{2} - 648 n - 324$) & $\frac{-73728 + 23436 \pi}{-8192 + 2607 \pi}$ & 0.00 \\
    
    \end{tabular}
    }
\end{table*}

\FloatBarrier

\begin{table}[]
    \centering
    \caption{\textbf{Additional formulas for $\pi$ that were automatically harvested from arXiv, clustered, and proven equivalent within the clusters, but not unified}. We expect these formulas to be unified in the future, by enlarging the $\pi$-CMF (\cref{def:the_pi_cmf}) to new dimensions, i.e. adding matrices and directions to walk in, and by improving the matching algorithm. This table complements \cref{tab:unified-formulas}; the formulas are mutually exclusive and were collected using the same harvesting pipeline. These formulas were then connected among themselves as described in \cref{appendix-algs-graph-growing}, resulting in clusters.
    }
    \label{tab:formulas-not-yet-unified}
    \resizebox{0.97\textwidth}{!}{%

    }
\end{table*}

\FloatBarrier

\newpage
\section*{NeurIPS Paper Checklist}

\begin{enumerate}

\item {\bf Claims}
    \item[] Question: Do the main claims made in the abstract and introduction accurately reflect the paper's contributions and scope?
    \item[] Answer: \answerYes{} 
    \item[] Justification: The abstract and introduction describe the absence and need of a framework for symbolic and rigorous unification of mathematical formulas, and touch upon the focus of this study: unification of $\pi$ formulas as a proof-of-concept of the general scheme. \cref{section-results} shows that most canonical forms are unified within our framework and that the methods described generalize to additional constants.
    \item[] Guidelines:
    \begin{itemize}
        \item The answer NA means that the abstract and introduction do not include the claims made in the paper.
        \item The abstract and/or introduction should clearly state the claims made, including the contributions made in the paper and important assumptions and limitations. A No or NA answer to this question will not be perceived well by the reviewers. 
        \item The claims made should match theoretical and experimental results, and reflect how much the results can be expected to generalize to other settings. 
        \item It is fine to include aspirational goals as motivation as long as it is clear that these goals are not attained by the paper. 
    \end{itemize}

\item {\bf Limitations}
    \item[] Question: Does the paper discuss the limitations of the work performed by the authors?
    \item[] Answer: \answerYes{} 
    \item[] Justification: \cref{section-discussion} emphasizes the following limitation: Depending on the constant to which the unification algorithm is tuned, the algorithm will need to be modified to support the corresponding CMF. We also discuss the performance loss when changing the LLM models.
    \item[] Guidelines:
    \begin{itemize}
        \item The answer NA means that the paper has no limitation while the answer No means that the paper has limitations, but those are not discussed in the paper. 
        \item The authors are encouraged to create a separate "Limitations" section in their paper.
        \item The paper should point out any strong assumptions and how robust the results are to violations of these assumptions (e.g., independence assumptions, noiseless settings, model well-specification, asymptotic approximations only holding locally). The authors should reflect on how these assumptions might be violated in practice and what the implications would be.
        \item The authors should reflect on the scope of the claims made, e.g., if the approach was only tested on a few datasets or with a few runs. In general, empirical results often depend on implicit assumptions, which should be articulated.
        \item The authors should reflect on the factors that influence the performance of the approach. For example, a facial recognition algorithm may perform poorly when image resolution is low or images are taken in low lighting. Or a speech-to-text system might not be used reliably to provide closed captions for online lectures because it fails to handle technical jargon.
        \item The authors should discuss the computational efficiency of the proposed algorithms and how they scale with dataset size.
        \item If applicable, the authors should discuss possible limitations of their approach to address problems of privacy and fairness.
        \item While the authors might fear that complete honesty about limitations might be used by reviewers as grounds for rejection, a worse outcome might be that reviewers discover limitations that aren't acknowledged in the paper. The authors should use their best judgment and recognize that individual actions in favor of transparency play an important role in developing norms that preserve the integrity of the community. Reviewers will be specifically instructed to not penalize honesty concerning limitations.
    \end{itemize}

\item {\bf Theory assumptions and proofs}
    \item[] Question: For each theoretical result, does the paper provide the full set of assumptions and a complete (and correct) proof?
    \item[] Answer: \answerYes{} 
    \item[] Justification: \cref{lemma-necessary-condition-for-coboundary-matrix} and its generalization, which enable UMAPS, are proven in \cref{appendix-coboundary-necessary-condition}. \cref{corollary-suffiency-of-umaps}, which states the sufficiency of UMAPS given a bound on the polynomial degrees of the coboundary matrix, is proven in \cref{appendix-algs-coboundary-algorithm}.
    \item[] Guidelines:
    \begin{itemize}
        \item The answer NA means that the paper does not include theoretical results. 
        \item All the theorems, formulas, and proofs in the paper should be numbered and cross-referenced.
        \item All assumptions should be clearly stated or referenced in the statement of any theorems.
        \item The proofs can either appear in the main paper or the supplemental material, but if they appear in the supplemental material, the authors are encouraged to provide a short proof sketch to provide intuition. 
        \item Inversely, any informal proof provided in the core of the paper should be complemented by formal proofs provided in appendix or supplemental material.
        \item Theorems and Lemmas that the proof relies upon should be properly referenced. 
    \end{itemize}

    \item {\bf Experimental result reproducibility}
    \item[] Question: Does the paper fully disclose all the information needed to reproduce the main experimental results of the paper to the extent that it affects the main claims and/or conclusions of the paper (regardless of whether the code and data are provided or not)?
    \item[] Answer: \answerYes{} 
    \item[] Justification: All code, data, and results are available on the project repository:  \href{https://github.com/RamanujanMachine/euler2ai}{https://github.com/RamanujanMachine/euler2ai}, including the final coboundary graph, which stores the transformations between equivalent formulas and directly leads to the result statistics in \cref{tab:unification_results}. All LLM prompts are included also in Appendices \ref{appendix-engineering-prompts} and \ref{appendix-llm-equivalence-proof}. Harvested formulas are included in \cref{appendix-results-tables}, and are included in the supplementary material ZIP along with their arXiv sources. CMF-generated formulas are included in the supplemental material ZIP. The UMAPS algorithm is available also on the online algorithm demonstration \citep{algorithm_demo}.
    \item[] Guidelines:
    \begin{itemize}
        \item The answer NA means that the paper does not include experiments.
        \item If the paper includes experiments, a No answer to this question will not be perceived well by the reviewers: Making the paper reproducible is important, regardless of whether the code and data are provided or not.
        \item If the contribution is a dataset and/or model, the authors should describe the steps taken to make their results reproducible or verifiable. 
        \item Depending on the contribution, reproducibility can be accomplished in various ways. For example, if the contribution is a novel architecture, describing the architecture fully might suffice, or if the contribution is a specific model and empirical evaluation, it may be necessary to either make it possible for others to replicate the model with the same dataset, or provide access to the model. In general. releasing code and data is often one good way to accomplish this, but reproducibility can also be provided via detailed instructions for how to replicate the results, access to a hosted model (e.g., in the case of a large language model), releasing of a model checkpoint, or other means that are appropriate to the research performed.
        \item While NeurIPS does not require releasing code, the conference does require all submissions to provide some reasonable avenue for reproducibility, which may depend on the nature of the contribution. For example
        \begin{enumerate}
            \item If the contribution is primarily a new algorithm, the paper should make it clear how to reproduce that algorithm.
            \item If the contribution is primarily a new model architecture, the paper should describe the architecture clearly and fully.
            \item If the contribution is a new model (e.g., a large language model), then there should either be a way to access this model for reproducing the results or a way to reproduce the model (e.g., with an open-source dataset or instructions for how to construct the dataset).
            \item We recognize that reproducibility may be tricky in some cases, in which case authors are welcome to describe the particular way they provide for reproducibility. In the case of closed-source models, it may be that access to the model is limited in some way (e.g., to registered users), but it should be possible for other researchers to have some path to reproducing or verifying the results.
        \end{enumerate}
    \end{itemize}

\item {\bf Open access to data and code}
    \item[] Question: Does the paper provide open access to the data and code, with sufficient instructions to faithfully reproduce the main experimental results, as described in supplemental material?
    \item[] Answer: \answerYes{} 
    \item[] Justification: The raw data used in the experiments is openly available on arXiv. All processed data and algorithms written by our group are provided in the project repository: \href{https://github.com/RamanujanMachine/euler2ai}{https://github.com/RamanujanMachine/euler2ai}, through which the paper's results can be reproduced. The online algorithm demonstration \citep{algorithm_demo} contains instructive examples for running UMAPS.
    \item[] Guidelines:
    \begin{itemize}
        \item The answer NA means that paper does not include experiments requiring code.
        \item Please see the NeurIPS code and data submission guidelines (\url{https://nips.cc/public/guides/CodeSubmissionPolicy}) for more details.
        \item While we encourage the release of code and data, we understand that this might not be possible, so “No” is an acceptable answer. Papers cannot be rejected simply for not including code, unless this is central to the contribution (e.g., for a new open-source benchmark).
        \item The instructions should contain the exact command and environment needed to run to reproduce the results. See the NeurIPS code and data submission guidelines (\url{https://nips.cc/public/guides/CodeSubmissionPolicy}) for more details.
        \item The authors should provide instructions on data access and preparation, including how to access the raw data, preprocessed data, intermediate data, and generated data, etc.
        \item The authors should provide scripts to reproduce all experimental results for the new proposed method and baselines. If only a subset of experiments are reproducible, they should state which ones are omitted from the script and why.
        \item At submission time, to preserve anonymity, the authors should release anonymized versions (if applicable).
        \item Providing as much information as possible in supplemental material (appended to the paper) is recommended, but including URLs to data and code is permitted.
    \end{itemize}

\item {\bf Experimental setting/details}
    \item[] Question: Does the paper specify all the training and test details (e.g., data splits, hyperparameters, how they were chosen, type of optimizer, etc.) necessary to understand the results?
    \item[] Answer: \answerYes{} 
    \item[] Justification: Our unification algorithm groups formulas according to dynamical metrics, which does not require training. Hyperparameters chosen are the granularity and similarity threshold for $\delta$, which are justified in the description of the coboundary graph algorithm (\cref{appendix-algs-graph-growing}) and in the sensitivity study (\cref{appendix-sensitivity-study}).
    \item[] Guidelines:
    \begin{itemize}
        \item The answer NA means that the paper does not include experiments.
        \item The experimental setting should be presented in the core of the paper to a level of detail that is necessary to appreciate the results and make sense of them.
        \item The full details can be provided either with the code, in appendix, or as supplemental material.
    \end{itemize}

\item {\bf Experiment statistical significance}
    \item[] Question: Does the paper report error bars suitably and correctly defined or other appropriate information about the statistical significance of the experiments?
    \item[] Answer: \answerYes{} 
    \item[] Justification: All statistical results, including benchmarks and other considerations regarding significance and performance, are detailed in \cref{section-benchmarking} and \cref{section-results}. 
    \item[] Guidelines:
    \begin{itemize}
        \item The answer NA means that the paper does not include experiments.
        \item The authors should answer "Yes" if the results are accompanied by error bars, confidence intervals, or statistical significance tests, at least for the experiments that support the main claims of the paper.
        \item The factors of variability that the error bars are capturing should be clearly stated (for example, train/test split, initialization, random drawing of some parameter, or overall run with given experimental conditions).
        \item The method for calculating the error bars should be explained (closed form formula, call to a library function, bootstrap, etc.)
        \item The assumptions made should be given (e.g., Normally distributed errors).
        \item It should be clear whether the error bar is the standard deviation or the standard error of the mean.
        \item It is OK to report 1-sigma error bars, but one should state it. The authors should preferably report a 2-sigma error bar than state that they have a 96\% CI, if the hypothesis of Normality of errors is not verified.
        \item For asymmetric distributions, the authors should be careful not to show in tables or figures symmetric error bars that would yield results that are out of range (e.g. negative error rates).
        \item If error bars are reported in tables or plots, The authors should explain in the text how they were calculated and reference the corresponding figures or tables in the text.
    \end{itemize}

\item {\bf Experiments compute resources}
    \item[] Question: For each experiment, does the paper provide sufficient information on the computer resources (type of compute workers, memory, time of execution) needed to reproduce the experiments?
    \item[] Answer: \answerYes{} 
    \item[] Justification: All LLM usage was done through the respective API of the provider, as is declared in \cref{appendix-classification-and-extraction}. The matching algorithm (UMAPS, etc.) was run on a private machine, as is declared in \cref{appendix-algs}. The sensitivity study was conducted on a university cluster as mentioned in \cref{appendix-sensitivity-study}.
    \item[] Guidelines:
    \begin{itemize}
        \item The answer NA means that the paper does not include experiments.
        \item The paper should indicate the type of compute workers CPU or GPU, internal cluster, or cloud provider, including relevant memory and storage.
        \item The paper should provide the amount of compute required for each of the individual experimental runs as well as estimate the total compute. 
        \item The paper should disclose whether the full research project required more compute than the experiments reported in the paper (e.g., preliminary or failed experiments that didn't make it into the paper). 
    \end{itemize}
    
\item {\bf Code of ethics}
    \item[] Question: Does the research conducted in the paper conform, in every respect, with the NeurIPS Code of Ethics \url{https://neurips.cc/public/EthicsGuidelines}?
    \item[] Answer: \answerYes{} 
    \item[] Justification: This work adheres to the NeurIPS Code of Ethics. Specifically, with respect to data concerns, this research utilizes metadata and content from arXiv.org, accessed in accordance with arXiv's API Terms of Use and individual paper licenses. All materials are used solely for non-commercial, academic purposes, with proper attribution to the original authors either in this paper, or in the supplementary material. Redistribution of full-text content is avoided unless explicitly permitted by the respective licenses.
    \item[] Guidelines:
    \begin{itemize}
        \item The answer NA means that the authors have not reviewed the NeurIPS Code of Ethics.
        \item If the authors answer No, they should explain the special circumstances that require a deviation from the Code of Ethics.
        \item The authors should make sure to preserve anonymity (e.g., if there is a special consideration due to laws or regulations in their jurisdiction).
    \end{itemize}

\item {\bf Broader impacts}
    \item[] Question: Does the paper discuss both potential positive societal impacts and negative societal impacts of the work performed?
    \item[] Answer: \answerYes{} 
    \item[] Justification: The introduction in \cref{section-introduction} includes discussion of the historical importance of the challenges this work tackles. The societal significance of the unification of mathematical knowledge is substantial, both for the wider math community and to help promote the democratization of mathematics. No negative potential impacts could be ascertained.
    \item[] Guidelines:
    \begin{itemize}
        \item The answer NA means that there is no societal impact of the work performed.
        \item If the authors answer NA or No, they should explain why their work has no societal impact or why the paper does not address societal impact.
        \item Examples of negative societal impacts include potential malicious or unintended uses (e.g., disinformation, generating fake profiles, surveillance), fairness considerations (e.g., deployment of technologies that could make decisions that unfairly impact specific groups), privacy considerations, and security considerations.
        \item The conference expects that many papers will be foundational research and not tied to particular applications, let alone deployments. However, if there is a direct path to any negative applications, the authors should point it out. For example, it is legitimate to point out that an improvement in the quality of generative models could be used to generate deepfakes for disinformation. On the other hand, it is not needed to point out that a generic algorithm for optimizing neural networks could enable people to train models that generate Deepfakes faster.
        \item The authors should consider possible harms that could arise when the technology is being used as intended and functioning correctly, harms that could arise when the technology is being used as intended but gives incorrect results, and harms following from (intentional or unintentional) misuse of the technology.
        \item If there are negative societal impacts, the authors could also discuss possible mitigation strategies (e.g., gated release of models, providing defenses in addition to attacks, mechanisms for monitoring misuse, mechanisms to monitor how a system learns from feedback over time, improving the efficiency and accessibility of ML).
    \end{itemize}
    
\item {\bf Safeguards}
    \item[] Question: Does the paper describe safeguards that have been put in place for responsible release of data or models that have a high risk for misuse (e.g., pretrained language models, image generators, or scraped datasets)?
    \item[] Answer: \answerNA{} 
    \item[] Justification: No models are released in this paper, and the data sets used are openly available, and used in accordance with the NeurIPS Code of Ethics. The algorithms and tools released pose no risk for misuse.
    \item[] Guidelines:
    \begin{itemize}
        \item The answer NA means that the paper poses no such risks.
        \item Released models that have a high risk for misuse or dual-use should be released with necessary safeguards to allow for controlled use of the model, for example by requiring that users adhere to usage guidelines or restrictions to access the model or implementing safety filters. 
        \item Datasets that have been scraped from the Internet could pose safety risks. The authors should describe how they avoided releasing unsafe images.
        \item We recognize that providing effective safeguards is challenging, and many papers do not require this, but we encourage authors to take this into account and make a best faith effort.
    \end{itemize}

\item {\bf Licenses for existing assets}
    \item[] Question: Are the creators or original owners of assets (e.g., code, data, models), used in the paper, properly credited and are the license and terms of use explicitly mentioned and properly respected?
    \item[] Answer: \answerYes{} 
    \item[] Justification: We cite all the creators of the tools we use. We also list all the papers from which our algorithm harvests formulas in \cref{tab:formula_sources}, in addition to the regular citations list.
    \item[] Guidelines:
    \begin{itemize}
        \item The answer NA means that the paper does not use existing assets.
        \item The authors should cite the original paper that produced the code package or dataset.
        \item The authors should state which version of the asset is used and, if possible, include a URL.
        \item The name of the license (e.g., CC-BY 4.0) should be included for each asset.
        \item For scraped data from a particular source (e.g., website), the copyright and terms of service of that source should be provided.
        \item If assets are released, the license, copyright information, and terms of use in the package should be provided. For popular datasets, \url{paperswithcode.com/datasets} has curated licenses for some datasets. Their licensing guide can help determine the license of a dataset.
        \item For existing datasets that are re-packaged, both the original license and the license of the derived asset (if it has changed) should be provided.
        \item If this information is not available online, the authors are encouraged to reach out to the asset's creators.
    \end{itemize}

\item {\bf New assets}
    \item[] Question: Are new assets introduced in the paper well documented and is the documentation provided alongside the assets?
    \item[] Answer: \answerYes{} 
    \item[] Justification: Yes, a well documented codebase is provided with the paper, for the formula unification pipeline and the UMAPS algorithm: \href{https://github.com/RamanujanMachine/euler2ai}{https://github.com/RamanujanMachine/euler2ai}. The data harvested from the literature is also provided with the codebase.
    \item[] Guidelines:
    \begin{itemize}
        \item The answer NA means that the paper does not release new assets.
        \item Researchers should communicate the details of the dataset/code/model as part of their submissions via structured templates. This includes details about training, license, limitations, etc. 
        \item The paper should discuss whether and how consent was obtained from people whose asset is used.
        \item At submission time, remember to anonymize your assets (if applicable). You can either create an anonymized URL or include an anonymized zip file.
    \end{itemize}

\item {\bf Crowdsourcing and research with human subjects}
    \item[] Question: For crowdsourcing experiments and research with human subjects, does the paper include the full text of instructions given to participants and screenshots, if applicable, as well as details about compensation (if any)? 
    \item[] Answer: \answerNA{} 
    \item[] Justification: The paper does not involve crowdsourcing nor research with human subjects.
    \item[] Guidelines:
    \begin{itemize}
        \item The answer NA means that the paper does not involve crowdsourcing nor research with human subjects.
        \item Including this information in the supplemental material is fine, but if the main contribution of the paper involves human subjects, then as much detail as possible should be included in the main paper. 
        \item According to the NeurIPS Code of Ethics, workers involved in data collection, curation, or other labor should be paid at least the minimum wage in the country of the data collector. 
    \end{itemize}

\item {\bf Institutional review board (IRB) approvals or equivalent for research with human subjects}
    \item[] Question: Does the paper describe potential risks incurred by study participants, whether such risks were disclosed to the subjects, and whether Institutional Review Board (IRB) approvals (or an equivalent approval/review based on the requirements of your country or institution) were obtained?
    \item[] Answer: \answerNA{} 
    \item[] Justification: The paper does not involve crowdsourcing nor research with human subjects.
    \item[] Guidelines:
    \begin{itemize}
        \item The answer NA means that the paper does not involve crowdsourcing nor research with human subjects.
        \item Depending on the country in which research is conducted, IRB approval (or equivalent) may be required for any human subjects research. If you obtained IRB approval, you should clearly state this in the paper. 
        \item We recognize that the procedures for this may vary significantly between institutions and locations, and we expect authors to adhere to the NeurIPS Code of Ethics and the guidelines for their institution. 
        \item For initial submissions, do not include any information that would break anonymity (if applicable), such as the institution conducting the review.
    \end{itemize}

\item {\bf Declaration of LLM usage}
    \item[] Question: Does the paper describe the usage of LLMs if it is an important, original, or non-standard component of the core methods in this research? Note that if the LLM is used only for writing, editing, or formatting purposes and does not impact the core methodology, scientific rigorousness, or originality of the research, declaration is not required.
    \item[] Answer: \answerYes{} 
    \item[] Justification: LLMs are used in two different stages of our algorithm (classification and extraction), and described openly and comprehensively. e.g, in \cref{subsection-engineering,subsection-engineering-validation,section-benchmarking,appendix-llm-equivalence-proof,appendix-engineering}. The classification stage uses LLMs in a standard manner for parsing mathematical language. The extraction stage uses an LLM-code feedback loop. 
    \item[] Guidelines:
    \begin{itemize}
        \item The answer NA means that the core method development in this research does not involve LLMs as any important, original, or non-standard components.
        \item Please refer to our LLM policy (\url{https://neurips.cc/Conferences/2025/LLM}) for what should or should not be described.
    \end{itemize}

\end{enumerate}

\end{document}